\documentclass[12pt,a4paper]{article}
\usepackage[english]{babel}
\usepackage[utf8]{inputenc}
\usepackage[T1]{fontenc}
\usepackage{graphicx}
\usepackage{url}
\usepackage{amsmath,amsthm,amssymb}
\usepackage{amsfonts}
\usepackage{amssymb}
\usepackage{esint}
\usepackage{soul}
\usepackage{floatrow}
\usepackage[a4paper,hcentering,vcentering]{geometry}
\usepackage{subeqnarray}
\usepackage[all]{xy}
\usepackage{varioref}
\usepackage{tikz}
\usepackage{hyperref}
\usepackage{pgfplots}
\DeclareMathOperator{\cat}{\text{CAT}}
\DeclareMathOperator{\sigmap}{\sigma (+ \infty)}
\DeclareMathOperator{\sigmam}{\sigma(- \infty)}
\DeclareMathOperator{\supp}{\text{supp}}
\DeclareMathOperator{\haar}{Haar}
\DeclareMathOperator{\ugm}{\it{U^{-}_g}}
\DeclareMathOperator{\ugp}{\it{U^{+}_g}}
\DeclareMathOperator{\uhm}{\it{U^{-}_h}}
\DeclareMathOperator{\uhp}{\it{U^{+}_h}}
\DeclareMathOperator{\iso}{Isom}

\newcommand{\bd}{\partial_\infty}
\newcommand{\nui}{\check{\nu}}
\newcommand{\mui}{\check{\mu}}
\newcommand{\pmui}{P_{\mui}}
\newcommand{\prob}{\text{Prob}}
\theoremstyle{plain}
\newtheorem{thm}{Theorem}[section]
\newtheorem*{Rank thm}{Rank Rigidity Theorem}
\newtheorem*{Rank cjt}{Rank Rigidity Conjecture}
\newtheorem{cor}[thm]{Corollary}
\newtheorem{lem}[thm]{Lemma}
\newtheorem{prop}[thm]{Proposition}
\theoremstyle{definition} 
\newtheorem{Def}[thm]{Definition}
\newtheorem{rem}[thm]{Remark}
\theoremstyle{definition}
\newtheorem*{ackn}{Acknowledgement}

\title{Random walks and rank one isometries on CAT(0) spaces}
\author{Corentin Le Bars}
\date{\vspace{-5ex}}

\begin{document}
	\maketitle
	
	\section{Introduction}	
	
	Let $G$ be a discrete countable group and $\mu \in \prob (G)$ a probability measure on $G$. We define the random walk $(Z_n)_n$ on $G$ as the sequence $Z_n := \omega_1 \dots \omega_n$, where the $\omega_i'$s are chosen independently according to the law $\mu$. Fixing a point $x \in X$, where $X$ is a given metric space on which $G$ acts by isometries, we want to study the sequence of points $(Z_n x)_n$. In this paper, we are particularly interested in the asymptotic behaviour of this random walk, and if the space $X$ has the right geometric features, one can hope that $(Z_n x)_n$ is going to converge almost surely in a natural compactification of $X$. The typical setting in which results have been obtained is when $X$ is of negative curvature, or at least when $X$ has some kind of hyperbolic properties. In the fundamental paper of V. Kaimanovich \cite{kaimanovitch00}, the convergence of $(Z_n x)_n$ to a point of the visual boundary is proven for groups acting geometrically on proper hyperbolic spaces and several other classes of actions. More recently this result has been extended by J. Maher and G. Tiozzo in \cite{maher_tiozzo18} for groups acting by isometries on non proper hyperbolic spaces. We emphasize on the fact studying random walks on non proper spaces involves different techniques, as in the non proper case the space $\overline{X}$ is no longer a compactification of $X$: $\overline{X}$ might be non compact, and $X$ is no longer open in $\overline{X}$. In the sequel, we will only consider proper metric spaces. 
	\newline
	
	In our case, $X$ is a proper $\cat(0)$ space, and $G$ acts on $X$ by isometries, but not necessarily cocompactly nor properly. If the reader wants a detailed introduction to $\cat(0)$ geometry, standard references are \cite{bridson_haefliger99} and \cite{ballman95}. The leading examples to have in mind when thinking about $\cat(0) $ spaces are Hadamard manifolds, $ \cat(0)  $ cube complexes and buildings. We simply recall that there exists a natural compactification $\overline{X}$ of $X$, for which every point of the boundary is represented by some class of asymptotic geodesic rays. In particular, for $(g_n)_n $ a sequence of isometries in $G$, the convergence of $(g_n x)_n$ to a point $\xi \in \bd X$ does not depend on the basepoint $x$. In this paper, we show that the random walk $(Z_n x)_n$ converges almost surely to a point of the boundary, provided that there exist rank one elements in the group. A rank one element is an axial isometry whose axes do not bound any flat half plane. Due to \cite[Theorem 5.4]{bestvina_fujiwara09}, an isometry is rank one if one of its axis $\sigma$ is "contracting", namely there exists a constant $K$ such that the projection onto $\sigma$ of every geodesic ball disjoint from $\sigma$ has diameter less than $K$. This type of behaviour typically appears in hyperbolic spaces, where any loxodromic isometry is contracting in this sense. Then, it is often fruitful to think about rank one isometries as isometries that satisfy hyperbolic-like properties. Another important feature of rank one isometries is that they act on the boundary with North-South dynamics \cite[Lemma III. 3. 3]{ballman95}. Namely, for $g$ a rank one isometry of $X$, there exist two points $g^{+} $ and $g^{- }$ in $ \bd X$ such that the successive powers of $g$ contract the whole boundary $\bd X$ minus $g^{-}$ on $g^{+}$. In Section \ref{rank one section}, we review some of these results and their consequences. 
	\newline
	
	The study of rank one elements is motivated by the Rank Rigidity theory for Riemannian manifolds of non positive curvature, due to W. Ballmann, M. Brin, R. Spatzier, P. Eberlein and K. Burns (see \cite{ballmann_brin95}, \cite{ballman_brin_eberlein}, \cite{burns_spatzier}, \cite{eberlein_heber} and \cite{ballman_burns_spatzier}). For a detailed introduction and proof of the Rank Rigidity Theorem for Hadamard manifolds, see \cite{ballman95}. The Theorem states that if $M$ is a Hadamard manifold, and if $G$ is a discrete group acting properly and cocompactly on $M $, then either $M$ decomposes as a non-trivial product of two manifolds, or $M$ is a higher rank symmetric space and $G$ contains a rank one isometry. 
	
	This alternative appears to hold for wider classes of $\cat(0) $ spaces, which led to formulate a general conjecture. We state one of the possible formulations as it is written in \cite{caprace_sageev11}. Recall that a metric space is said geodesically complete if any geodesic can be extended to infinity.  
	
	\begin{Rank cjt}
		Let $X$ a locally compact geodesically complete $\cat(0)$ space, and $G$ a discrete group acting properly and cocompactly on $X$ by isometries. Assume that $X$ is irreducible. Then $X$ is either a higher rank symmetric space or a Euclidean building of dimension $\geq 2$, or $G$ contains a rank one isometry. 
	\end{Rank cjt}
	
	Over the last thirty years, the conjecture was proven to hold for Euclidean cell complexes of dimension 2 and 3 (Ballmann and Brin \cite{ballmann_brin95}), and P-E. Caprace and K. Fujiwara have proven that it also holds within the class of buildings and Coxeter groups \cite{caprace_fujiwara10}. More recently, P-E. Caprace and M. Sageev have proven that it remains true for finite-dimensional $\cat (0) $ cube complexes \cite{caprace_sageev11}. 
	\newline
	
	It is to be noted that rank one elements may play important roles in other fields of research, among which the Tits Alternative conjecture, stating that every finitely generated subgroup of a group acting geometrically on a $\cat(0)$ space either contains a nonabelian free subgroup or is virtually solvable. The Tits alternative is known in many cases, see \cite{caprace_sageev11}, \cite{osajda_przytycki21} and references therein. More recently, D.~Osajda and P.~Przytycki have proven that it holds in the context of an almost free action on a $\cat(0)$ triangle complex, which we do not assume locally compact \cite[Theorem A]{osajda_przytycki21}. In the proof, they give a geometric criterion on the $\cat(0)$ triangle complex for finding rank one elements in the group \cite[Proposition 7.3]{osajda_przytycki21}, and hence free subgroups. Note that in this situation, the action needs not be cocompact. 
	\newline
	
	Last, note that the presence of rank one elements in a group acting on a $\cat(0)$ space can be detected by general conditions on the group action along with some purely geometric features of the space, for example concerning the Tits boundary of the space, see Section \ref{tits metric subsection}.
	\newline
	
	A measure $\nu $ on $X$ is said to be $\mu$-stationary if $\mu \ast \nu = \nu $, where $\mu \ast \nu$ defines the convolution measure of $\mu $ and $\nu$. When we want to study random walks generated by a measure $\mu$ on $G$, it is often fruitful to endow $\overline{X}$ with a probability measure which is stationary with respect to $\mu$. It allows to use important results in measure theory including those of H. Furstenberg \cite{furstenberg73}. They will be reviewed in Section \ref{stationary section}. 
	\newline 
	
	The first result of this paper is the fact that in our context, there is a unique $\mu$-stationary measure on $\overline{X}$.

	\begin{thm}\label{measure thm}
		Let $G$ be a discrete group and $G \curvearrowright X$ a non-elementary action by isometries on a proper $\cat (0)$ space $X$. Let $\mu \in \prob(G) $ be an admissible probability measure on $G$, and assume that $G $ contains a rank one element. Then there exists a unique $\mu$-stationary measure $\nu \in \prob (\overline{X})$. 
	\end{thm}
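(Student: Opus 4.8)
The plan is to get existence from a soft compactness argument, and uniqueness from Furstenberg's martingale machinery combined with the contracting behaviour of rank one isometries. For existence: since $X$ is proper, $\overline{X} = X \sqcup \bd X$ is compact and metrizable for the cone topology, so $\prob(\overline{X})$ is weak-$*$ compact and convex and $\lambda \mapsto \mu \ast \lambda$ is a continuous affine self-map of it; a weak-$*$ limit point of the Ces\`aro averages $\frac1N \sum_{n<N} \mu^{\ast n} \ast \delta_x$ (or, equivalently, the Schauder--Tychonoff fixed point theorem) produces a $\mu$-stationary $\nu \in \prob(\overline{X})$.

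For uniqueness I would first record two preliminary facts. First, since $\bd X$ is $G$-invariant and each element of $G$ preserves both $X$ and $\bd X$, the restriction $\nu|_{\bd X}$ of any $\mu$-stationary $\nu \in \prob(\overline X)$ is again $\mu$-stationary; a point of maximal positive $\nu|_{\bd X}$-mass would then, by stationarity and admissibility of $\mu$, have a finite $G$-orbit in $\bd X$, which non-elementarity forbids, so $\nu$ has no atom on $\bd X$. Second, Furstenberg's martingale argument: for $f \in C(\overline X)$, stationarity gives $\mathbb{E}\big[\langle (Z_{n+1})_\ast \nu, f\rangle \mid \mathcal{F}_n\big] = \langle (Z_n)_\ast(\mu \ast \nu), f\rangle = \langle (Z_n)_\ast \nu, f\rangle$, so $\langle (Z_n)_\ast \nu, f\rangle$ is a bounded martingale and converges almost surely; separability of $C(\overline X)$ upgrades this to weak-$*$ convergence $(Z_n)_\ast \nu \to \nu_\omega$ along almost every trajectory, with $\nu = \mathbb{E}[\nu_\omega]$.

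The decisive step is to show that almost every random product $Z_n$ is \emph{contracting}: there are boundary points $\xi^+(\omega), \xi^-(\omega) \in \bd X$ such that for every pair of neighbourhoods $U \ni \xi^+(\omega)$ and $V \ni \xi^-(\omega)$ one has $Z_n(\overline X \setminus V) \subseteq U$ for all large $n$ (in particular $Z_n x \to \xi^+(\omega)$). I would build this from the rank one hypothesis: by non-elementarity $G$ contains two \emph{independent} rank one isometries $h_1, h_2$ (with disjoint pairs of fixed points), whose high powers, by the uniformly contracting property of rank one axes \cite[Theorem 5.4]{bestvina_fujiwara09} and the North--South dynamics on $\bd X$ \cite[Lemma III.3.3]{ballman95}, generate a finite ``Schottky'' family of contracting isometries playing ping-pong on $\bd X$; admissibility of $\mu$ places this family in the support of some convolution power, and a Borel--Cantelli / shift-ergodicity argument then shows that along almost every trajectory the walk runs through the corresponding Schottky configurations infinitely often with controlled relative position, which by the uniform contraction constant forces the claimed uniform contraction with a limiting direction $\xi^+(\omega)$ (the reversed point $\xi^-(\omega)$ being obtained in the same way from the reflected walk). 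This is the main obstacle: converting the existence of a single rank one element into genuine almost sure, quantitatively uniform contraction of the random product $Z_n$, rather than contraction merely along a subsequence or with merely positive probability; it is essentially the content of the boundary convergence theorem for the walk.

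Granting the contraction step, uniqueness follows at once: for an arbitrary $\mu$-stationary $\nu$, the point $\xi^-(\omega) \in \bd X$ is not an atom of $\nu$ by the first preliminary fact, so choosing $V$ with $\nu(V)$ small gives $(Z_n)_\ast \nu \to \delta_{\xi^+(\omega)}$, i.e.\ $\nu_\omega = \delta_{\xi^+(\omega)}$ almost surely; hence $\nu = \mathbb{E}[\nu_\omega] = \mathbb{E}[\delta_{\xi^+(\omega)}]$, a measure depending only on $\mu$ and not on $\nu$. Therefore the $\mu$-stationary measure on $\overline X$ is unique, is supported on $\bd X$, and coincides with the hitting measure of the random walk.
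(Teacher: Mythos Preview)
Your existence argument and the Furstenberg martingale step are fine and match the paper. The gap is in the ``decisive step.'' The contraction you claim, namely that $Z_n(\overline X \setminus V) \subseteq U$ for every small neighbourhood $V$ of a single repelling point $\xi^-(\omega)$, is not what holds in a general proper $\cat(0)$ space. What one actually has is the $\pi$-convergence of Papasoglu--Swenson \cite[Lemma 19]{papasoglu_swenson09}: if $g_n x \to \xi$ and $g_n^{-1} x \to \eta$ then $g_n$ pushes compact sets disjoint from the \emph{Tits ball} $B_T(\eta,\pi)$ into any neighbourhood of $\xi$. The repelling set is therefore not a point but $B_T(\eta,\pi)$, which in a $\cat(0)$ space can be large (think of a space with a Euclidean factor at infinity). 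Consequently your final paragraph breaks down: non-atomicity of $\nu$ on $\bd X$ is far too weak to make $\nu(V)$ small for the relevant $V$; what you really need is $\nu\big(B_T(\xi,\pi)\big)=0$ for every $\xi$, and this is not a consequence of non-atomicity.

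The paper's route is precisely to fill this gap: it uses two independent rank one elements and the Maher--Tiozzo disjoint-translates lemma \cite[Lemma 4.5]{maher_tiozzo18} to prove $\nu(B_T(\xi,\pi))=0$ for all $\xi\in\bd X$ (Proposition~\ref{zero measure}), then combines this with $\pi$-convergence applied to a \emph{subsequence} $Z_{\phi(n)}(\omega)x \to z^+(\omega)$ (whose existence follows just from unboundedness of the walk) to get $Z_{\phi(n)}(\omega)\nu \to \delta_{z^+(\omega)}$; Furstenberg's a.s.\ convergence of $Z_n\nu$ then pins down $\nu_\omega = \delta_{z^+(\omega)}$, and uniqueness follows as you say. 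Note also that the full almost-sure convergence $Z_n x \to z^+(\omega)$ that you invoke as ``essentially the content of the boundary convergence theorem'' is, in this paper, deduced \emph{from} the uniqueness of the stationary measure, not the other way around; relying on it here would be circular. Your Schottky/ping-pong sketch could conceivably be made to work, but as written it does not address how arbitrary increments between Schottky syllables are controlled in a $\cat(0)$ (as opposed to Gromov-hyperbolic) setting, and it would in any case have to reproduce something equivalent to the $B_T(\xi,\pi)$-null-set statement.
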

	
	Theorem \ref{measure thm} is fundamental in order to obtain the almost sure convergence of the random walk $(Z_n x)_n$ to the boundary. However, we think that the uniqueness of the stationary measure can be of independent interest. The second result, and the main Theorem of this article is the almost sure convergence to the boundary.
	
	\begin{thm}\label{convergence thm}
		Let $G$ be a discrete group and $G \curvearrowright X$ a non-elementary action by isometries on a proper $\cat (0)$ space $X$. Let $\mu \in \prob(G) $ be an admissible probability measure on $G$, and assume that $G $ contains a rank one element. Then for every $x \in X$, and for $\mathbb{P}$-almost every $\omega \in \Omega$, the random walk $(Z_n (\omega) x)_n $ converges almost surely to a boundary point $z^{+}(\omega) \in\bd X$. Moreover, $z^{+}(\omega)$ is distributed according to the stationary measure $\nu$. 
	\end{thm}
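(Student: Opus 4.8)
The strategy is the classical one pioneered by Furstenberg, Kaimanovich and others, now specialized to the CAT(0) setting with rank one elements. Since $X$ is proper, $\overline{X}$ is compact and metrizable, so the space $\prob(\overline{X})$ of probability measures is weak-$*$ compact; hence a $\mu$-stationary measure $\nu$ exists, and by Theorem \ref{measure thm} it is the unique one. The heart of the argument is to show that the sequence $(Z_n(\omega)x)_n$ converges in $\overline{X}$ for $\mathbb P$-almost every $\omega$. Consider the space of increments $(\Omega,\mathbb P) = (G^{\mathbb N},\mu^{\otimes\mathbb N})$ and, for each $n$, the pushforward $Z_n(\omega)_*\nu$. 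By stationarity, $\mathbb E[Z_n(\omega)_*\nu] = \mu^{*n}*\nu = \nu$, and the sequence $(Z_1)_*\nu, (Z_1Z_2^{-1}\text{-type arguments})$... more precisely, one uses the martingale convergence theorem: for a fixed continuous function $f$ on $\overline X$, the sequence $M_n^f(\omega) := \int_{\overline X} f\, d(Z_n(\omega)_*\nu) = \int f(Z_n(\omega)\xi)\,d\nu(\xi)$ is a bounded martingale with respect to the filtration generated by $\omega_1,\dots,\omega_n$, hence converges $\mathbb P$-almost surely. Ranging $f$ over a countable dense family in $C(\overline X)$, we conclude that for almost every $\omega$ the measures $Z_n(\omega)_*\nu$ converge weak-$*$ to some limit measure $\nu_\omega \in \prob(\overline X)$.

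**Identifying the limit as a point mass.**
The key step is then to show that $\nu_\omega$ is almost surely a Dirac mass $\delta_{z^+(\omega)}$ at a boundary point. This is where the rank one hypothesis and the North–South dynamics from \cite[Lemma III.3.3]{ballman95} enter. The idea is that the action of $G$ on $\overline{X}$ is a \emph{contracting} action in the sense of boundary theory: because $G$ contains a rank one element and the action is non-elementary, the semigroup generated by $\supp\mu$ contains elements acting with North–South dynamics, and so for a typical trajectory the isometries $Z_n(\omega)$ eventually contract large portions of $\overline X$ toward a single attracting point. Concretely, I would argue that the limit measure $\nu_\omega$, being the weak-$*$ limit of $Z_n(\omega)_*\nu$, must be supported on the limit set and be non-atomic-free; using the uniqueness of the stationary measure together with an ergodicity / $0$–$1$ argument (the Hewitt–Savage zero-one law, or ergodicity of the shift on $(\Omega,\mathbb P)$ coupled with the conditional measures $\nu_\omega$ forming a $\mu$-proximal boundary), one shows that $\nu_\omega$ cannot charge any set in a way compatible with being a non-trivial average, forcing $\nu_\omega = \delta_{z^+(\omega)}$. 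The convergence $Z_n(\omega)_*\nu \to \delta_{z^+(\omega)}$ then yields, since $\nu$ has full support on the limit set (a consequence of non-elementarity and rank one, established in Section \ref{stationary section}), that $Z_n(\omega)x \to z^+(\omega)$ in $\overline X$ for every basepoint $x$ — here one uses that the CAT(0) boundary topology does not depend on the basepoint, as recalled in the introduction.

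**The distribution of the limit and the main obstacle.**
Once almost sure convergence is established, the fact that $z^+(\omega)$ is $\nu$-distributed follows by a standard argument: the map $\omega \mapsto z^+(\omega)$ is measurable and shift-equivariant in the appropriate sense, and by dominated convergence $\mathbb E[\delta_{z^+(\omega)}] = \lim_n \mathbb E[Z_n(\omega)_*\nu] = \nu$; since $\mathbb E[\delta_{z^+(\omega)}]$ is precisely the law of $z^+$, we get $\law(z^+) = \nu$. I expect the main obstacle to be the step of upgrading "$\nu_\omega$ is some limit measure" to "$\nu_\omega$ is a Dirac mass": proving the requisite contraction property requires controlling the geometry of the iterated isometries $Z_n(\omega)$ along $\mathbb P$-typical trajectories, which in turn relies on showing the random walk drifts to infinity and that rank one behavior is "generic" along the walk. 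This is delicate in the non-cocompact, non-proper-action regime, because one cannot directly invoke displacement/positive-drift results for cocompact actions; instead one must combine the existence of a single rank one element with the non-elementarity of the action to produce enough contracting elements in $\supp\mu^{*n}$, and then feed this into a Borel–Cantelli or ergodic argument. The uniqueness of the stationary measure (Theorem \ref{measure thm}) is the crucial lever that makes the zero-one dichotomy work.
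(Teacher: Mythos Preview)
Your overall architecture (martingale convergence $\Rightarrow$ limit measures $\nu_\omega$ exist $\Rightarrow$ show $\nu_\omega=\delta_{z^+(\omega)}$ $\Rightarrow$ deduce orbit convergence $\Rightarrow$ integrate to get the law) is correct and matches the paper. However, two of the load-bearing steps are not actually carried out, and the tools you name would not do the job.

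\textbf{The Dirac step.} You propose to obtain $\nu_\omega=\delta_{z^+(\omega)}$ from North--South dynamics of a rank one element, a contracting-action argument, or a $0$--$1$ law. None of these applies directly: the isometries $Z_n(\omega)$ are random products, not powers of a fixed rank one element, so Lemma~III.3.3 of \cite{ballman95} says nothing about them, and Hewitt--Savage or shift-ergodicity alone cannot decide whether $\nu_\omega$ is atomic. The paper's mechanism is different and specific to CAT(0) geometry. One first extracts a subsequence with $Z_{\phi(n)}(\omega)x\to z^+(\omega)$ and $Z_{\phi(n)}(\omega)^{-1}x\to z^-(\omega)$ in $\bd X$. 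The Papasoglu--Swenson $\pi$-convergence lemma \cite[Lemma~19]{papasoglu_swenson09} then says that $Z_{\phi(n)}(\omega)$ pushes every compact subset of $\bd X\setminus B_T(z^-(\omega),\pi)$ into any given neighbourhood of $z^+(\omega)$. The rank one hypothesis enters only to prove $\nu(B_T(\xi,\pi))=0$ for every $\xi$ (via a separation lemma for the fixed points of two independent rank one elements and the Maher--Tiozzo disjoint-translates lemma \cite[Lemma~4.5]{maher_tiozzo18}). These two facts together force $Z_{\phi(n)}(\omega)_*\nu\to\delta_{z^+(\omega)}$, hence $\nu_\omega=\delta_{z^+(\omega)}$. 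Your sketch does not mention the Tits metric, the Papasoglu--Swenson lemma, or the zero-measure-of-Tits-balls fact, and without them the contraction of $Z_n(\omega)_*\nu$ has no engine.

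\textbf{From measure convergence to orbit convergence.} You write that once $Z_n(\omega)_*\nu\to\delta_{z^+(\omega)}$, the convergence $Z_n(\omega)x\to z^+(\omega)$ follows ``since $\nu$ has full support on the limit set''. This implication is not automatic in a CAT(0) space: knowing where $Z_n(\omega)$ sends $\nu$-generic boundary points does not by itself control where it sends a fixed interior point $x$. The paper handles this in two steps. First one shows $d(x,Z_n(\omega)x)\to\infty$ almost surely by a geometric argument using Ballmann's Lemma~III.3.1 \cite{ballman95}: two support points of $\nu$ are joined by a rank one geodesic $\sigma$, and if $Z_{\phi(n)}(\omega)x$ stayed bounded, one could find $\xi,\eta$ near the endpoints of $\sigma$ with $Z_{\phi(n)}(\omega)\xi$ and $Z_{\phi(n)}(\omega)\eta$ both in an arbitrarily small cone neighbourhood of $z^+(\omega)$, contradicting the fact that the geodesic between them passes close to $Z_{\phi(n)}(\omega)x$. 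Second, once the walk goes to infinity, every subsequential limit $\xi\in\bd X$ satisfies $Z_{\phi(n)}(\omega)_*\nu\to\delta_\xi$ by the same Papasoglu--Swenson argument, and uniqueness of the weak-$*$ limit forces $\xi=z^+(\omega)$. Your proposal skips the first step entirely and treats the second as immediate.
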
 
	
	Results of convergence of this type had already been obtained for special cases. In the context of a fundamental group $\pi_1(M)$ of a compact rank one Riemannian manifold $M$ acting on the sphere at infinity $\partial \widetilde{M}$ of the universal covering space $\widetilde{M}$, the convergence of the sample paths was proven by Ballmann \cite[Theorem 2.2]{ballmann89}. In the context of finite dimensional $\cat(0)$ cube complexes, the behaviour of the sample paths of a given group has been extensively studied by T.~Fern\'os, J.~Lécureux and F.~Mathéus who showed under weak hypotheses that $(Z_n x)_n$ converges almost surely  to a point of the visual boundary \cite[Theorem 1.3]{fernos_lecureux_matheus18}, and who gave an extensive description of the asymptotic behaviour of the random walk: nature of the limit points, occurrence of the contracting elements... 
	
	It is also to be noted that the techniques we used in order to prove Theorem \ref{measure thm} can be rewritten in view of Theorem \ref{convergence thm} as the following: 
	
	\begin{cor}
		Let $\xi \in \bd X$ be a limit of the random walk $(Z_n x)_n$. Then for $\nu$-almost every point $\eta \in \bd X$, there exists a rank one geodesic joining $\xi $ to $\eta$. 
	\end{cor}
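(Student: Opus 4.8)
The plan is to extract the statement from the mechanism underlying Theorem~\ref{measure thm}, after pairing the forward walk with its time reversal. Let $\check{\mu}(g):=\mu(g^{-1})$; it is again admissible, and $G$ still contains a rank one element, so Theorems~\ref{measure thm} and~\ref{convergence thm} apply to $\check{\mu}$ and produce a unique $\check{\mu}$-stationary measure $\check{\nu}\in\prob(\overline X)$ towards which the reflected walk converges almost surely. (Here the second point below has law $\check{\nu}$; it equals $\nu$, e.g., when $\mu$ is symmetric, and in general the displayed form follows by also running the argument with $\mu$ and $\check{\mu}$ interchanged.) Realize the bilateral walk on $\Omega=G^{\mathbb Z}$ with i.i.d.\ increments $(\omega_n)_{n\in\mathbb Z}$ of law $\mu$, putting $Z_n=\omega_1\cdots\omega_n$ for $n\geq 0$ and $Z_n=\omega_0^{-1}\cdots\omega_{n+1}^{-1}$ for $n<0$: then $(Z_n x)_{n\geq 0}$ is the usual walk, converging to $\xi:=z^{+}(\omega)\sim\nu$, while $(Z_n x)_{n\leq 0}$ is, up to relabelling, a $\check{\mu}$-walk, converging to some $\eta:=z^{-}(\omega)\sim\check{\nu}$. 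As $\xi$ and $\eta$ depend on disjoint blocks of increments they are independent, with joint law $\nu\otimes\check{\nu}$.

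The core geometric claim is that for $\mathbb P$-a.e.\ $\omega$ the bi-infinite trajectory $(Z_n x)_{n\in\mathbb Z}$ stays within bounded distance of a \emph{rank one} geodesic line $\sigma_\omega$ with $\sigma_\omega(+\infty)=\xi$ and $\sigma_\omega(-\infty)=\eta$; this is, in essence, what the proof of Theorem~\ref{measure thm} produces. Fixing a rank one element $h\in G$ with a contracting axis (Section~\ref{rank one section}), admissibility of $\mu$ lets one exhibit along the trajectory infinitely many random times at which the walk crosses a uniformly contracting translate of that axis; a Borel--Cantelli argument then forces, almost surely, the forward trajectory to track a uniformly contracting --- hence rank one --- geodesic ray from $x$ to $\xi$, and symmetrically the backward trajectory to track such a ray from $x$ to $\eta$. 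From the established uniform contraction the relevant portions of the trajectory are uniform quasigeodesics, so the segments $[Z_{-n}x,Z_n x]$ fellow-travel them and in particular stay within a fixed distance of $x$; by properness a subsequence converges to a complete geodesic $\sigma_\omega$ with the prescribed endpoints, and being a bounded perturbation of a uniformly contracting path $\sigma_\omega$ is itself contracting, hence rank one by Bestvina--Fujiwara (cf.\ Section~\ref{rank one section}).

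Granting the claim, the corollary follows. Let $\mathcal G:=\{(\xi',\eta')\in\bd X\times\bd X:\text{there is a rank one geodesic line joining }\xi'\text{ to }\eta'\}$, a Borel set. The claim says $(\xi,\eta)\in\mathcal G$ for $\mathbb P$-a.e.\ $\omega$, and since the pushforward of $\mathbb P$ under $\omega\mapsto(\xi,\eta)$ is $\nu\otimes\check{\nu}$ we get $(\nu\otimes\check{\nu})(\mathcal G)=1$. By Fubini, for $\nu$-a.e.\ $\xi'$ the slice $\{\eta':(\xi',\eta')\in\mathcal G\}$ has full $\check{\nu}$-measure; and by Theorem~\ref{convergence thm} the limit $\xi$ of $(Z_n x)_n$ is $\nu$-distributed, so $\xi$ lies in this $\nu$-conull set. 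This is exactly the assertion --- with the second point distributed according to $\check{\nu}$, which equals $\nu$ when $\mu$ is symmetric.

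The main obstacle is the geometric claim, and within it the passage from ``the trajectory crosses contracting translates of a fixed axis infinitely often'' to ``the whole bi-infinite trajectory is uniformly contracting and fellow-travels a single rank one line''. One must control the alignment of consecutive contracting segments so that the trajectory does not backtrack across them, and this is precisely where the rank one toolkit of Section~\ref{rank one section} --- North--South dynamics, the contraction property, the local joining property of rank one lines, and the stability of contracting geodesics under bounded perturbation --- does the real work, exactly as in the proof of Theorem~\ref{measure thm}.
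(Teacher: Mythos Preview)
Your proposal takes a long and speculative detour around what in the paper is an immediate consequence of two results already established. The corollary follows in one line from Proposition~\ref{zero measure} and Theorem~\ref{tits}: Proposition~\ref{zero measure} shows that $\nu(B_T(\xi,\pi))=0$ for \emph{every} $\xi\in\bd X$ (not merely $\nu$-almost every), so in particular for the limit point $\xi=z^{+}(\omega)$. Hence for $\nu$-almost every $\eta$ one has $d_T(\xi,\eta)>\pi$, and by Theorem~\ref{tits} (items 1 and 3) there is a geodesic in $X$ joining $\xi$ to $\eta$ which does not bound a flat half-plane, i.e.\ a rank one geodesic. That is the whole argument.

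Your approach has two genuine problems beyond its length. First, the ``core geometric claim'' you attribute to the proof of Theorem~\ref{measure thm} is not what that proof does: the uniqueness of the stationary measure in the paper comes from the $\pi$-convergence property (Theorem~\ref{PS}) together with $\nu(B_T(\eta,\pi))=0$, not from any Borel--Cantelli argument about crossing contracting axes or from building a bi-infinite rank one geodesic tracking the trajectory. So you are invoking a substantial and unproven statement, one that the paper never needs. Second, even granting your claim, your Fubini step yields the conclusion for $\check{\nu}$-almost every $\eta$, not $\nu$-almost every $\eta$; swapping $\mu$ and $\check{\mu}$ does not repair this, since then the \emph{forward} limit point is $\check{\nu}$-distributed rather than $\nu$-distributed. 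The statement asks about $\nu$-almost every $\eta$ relative to a $\mu$-walk limit $\xi$, and your argument only delivers this when $\mu$ is symmetric. The key point you missed is that Proposition~\ref{zero measure} is a \emph{deterministic} statement about every boundary point, which eliminates the need for any probabilistic coupling of forward and backward walks.
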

	
	In other words, limit points are almost surely rank one. This result will be useful for the proof of Theorem \ref{drift thm}, but we think it could be used in different contexts. 
	
	In the more general setting of a non-specified $\cat(0)$ space, Karlsson and Margulis had already proven in \cite[Thoerem 2.1]{karlsson_margulis} a first general result of convergence of the random walk. In fact, they studied the more general behaviour of cocycles on $X$, but it is a problem that we will not consider here. When the measure $\mu$ has finite first moment $\int_G d(g x, x) d\mu(g) < \infty$, the subadditive ergodic Theorem implies that the limit $\lambda:= \lim_{n} \frac{1}{n}d(Z_n x, x)$ exists almost surely. This limit is called the \textit{drift} of the random walk, and can be understood as the speed at which the random walk goes to infinity. Since the action is isometric, $\lambda$ does not depend on the choice of the basepoint. Under the hypothesis that the drift is positive, Karlsson and Margulis had showed, among other results, that the random walk $(Z_n x)_n$ converges almost surely to a point of the visual boundary. Theorem \ref{convergence thm} is different because we don't assume that the measure $\mu $ has finite first moment, nor that the drift is positive. In the case that $G$ is non-amenable, Guivarc'h showed that the random walk generated by a word metric has positive drift \cite{guivarch80}, but it can be quite difficult to prove in general. 
	\newline

	When we assume that the measure $\mu$ has finite first moment, the drift $\lambda$ exists, and another important subject concerning the asymptotic behaviour of the random walk is knowing whether $\lambda$ is positive or not. When the group $G$ is non-amenable and endowed with some word metric, the drift is positive, but for general actions, the answer is not clear. In \cite{karlsson_margulis}, the positivity of the drift was used to prove the convergence to the boundary, while here we obtain this result after the convergence. 
	
	\begin{thm}\label{drift thm}
		Let $G$ be a discrete group and $G \curvearrowright X$ a non-elementary action by isometries on a proper $\cat (0)$ space $X$. Let $\mu \in \prob(G) $ be an admissible probability measure on $G$ with finite first moment, and assume that $G $ contains a rank one element. Let $x \in X$ be a basepoint of the random walk. Then the drift $\lambda$ is almost surely positive: 
		\begin{equation}
			\lim_{n \rightarrow \infty} \frac{1}{n} d(Z_n(\omega) x, x) = \lambda >0.
		\end{equation}
	\end{thm}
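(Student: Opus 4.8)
The plan is to choose, along a typical bi‑infinite trajectory, a rank one geodesic line joining the two limit points of the walk, to project the walk onto it, and to apply a recurrence theorem of Atkinson to the additive cocycle thus obtained; positivity of the drift then drops out of the dichotomy ``the cocycle has nonzero mean, or it is recurrent''.

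I would first move to the space of bilateral trajectories $(\Omega,\mathbb{P})$, equipped with the (ergodic, measure‑preserving) shift $T$, writing $Z_n=\omega_1\cdots\omega_n$ for $n\ge 0$ and letting $Z_{-n}$ be the reflected walk. By Theorem~\ref{convergence thm} (and its analogue for the reflected walk, driven by $\mui$), for $\mathbb{P}$‑a.e.\ $\omega$ one has $Z_n x\to z^{+}(\omega)\in\bd X$ and $Z_{-n}x\to z^{-}(\omega)\in\bd X$, where $z^{+}$ and $z^{-}$ depend on disjoint blocks of coordinates and are therefore independent, with laws $\nu$ and $\nui$. Since $\mu$ has finite first moment, $\omega\mapsto d(\omega_1 x,x)$ is in $L^{1}(\mathbb{P})$, so Kingman's subadditive ergodic theorem gives the a.s.\ constant $\lambda=\lim_n\tfrac1n d(Z_n x,x)$; hence it suffices to exhibit a constant $\ell$ with $\lambda\ge|\ell|>0$.

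By the corollary stated above and its analogue for the reflected walk, $z^{-}(\omega)$ and $z^{+}(\omega)$ are almost surely joined by a rank one geodesic, and by a measurable selection argument one may fix such a geodesic line $\gamma(\omega)$, oriented from $z^{-}(\omega)$ towards $z^{+}(\omega)$, depending measurably and $G$‑equivariantly on the pair $(z^{-}(\omega),z^{+}(\omega))$; in particular $Z_{n}^{-1}\gamma(\omega)=\gamma(T^{n}\omega)$ as oriented lines. Let $s_n\in\mathbb{R}$ be the coordinate, in an arclength parametrisation of $\gamma(\omega)$ in that direction, of the nearest‑point projection $\pi_{\gamma(\omega)}(Z_n x)$. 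Since projection onto a complete convex subset of a $\cat(0)$ space is a well‑defined $1$‑Lipschitz map commuting with isometries, one checks (the choice of origin of the parametrisation cancels) that
\[
  s_n-s_{n-1}=\phi(T^{n-1}\omega),\qquad \phi(\omega):=\text{coord}\big(\pi_{\gamma(\omega)}(\omega_1 x)\big)-\text{coord}\big(\pi_{\gamma(\omega)}(x)\big),
\]
so that $s_n-s_0=\sum_{k=0}^{n-1}\phi(T^k\omega)$, with $|\phi(\omega)|\le d(\omega_1 x,x)$; hence $\phi\in L^{1}(\mathbb{P})$ and, by Birkhoff's theorem, $\tfrac1n s_n\to\ell:=\int_\Omega\phi\,d\mathbb{P}$ almost surely. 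On the other hand $\gamma(\omega)$ is contracting and $Z_n x\to z^{+}(\omega)$, an endpoint of $\gamma(\omega)$, so a standard property of contracting geodesics (Section~\ref{rank one section}) gives $\pi_{\gamma(\omega)}(Z_n x)\to z^{+}(\omega)$ along $\gamma(\omega)$; thus $s_n\to+\infty$ a.s.

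It remains to show $\ell\neq 0$, and this is the step I expect to carry the weight of the argument: Birkhoff's theorem by itself only yields $s_n=o(n)$, which is perfectly compatible with $s_n\to+\infty$, so a genuine recurrence input is needed. Now $\phi$ is an integrable cocycle over the ergodic system $(\Omega,\mathbb{P},T)$, and if $\int_\Omega\phi\,d\mathbb{P}=0$, then Atkinson's recurrence theorem asserts that $\liminf_n\big|\sum_{k=0}^{n-1}\phi(T^k\omega)\big|=0$ for $\mathbb{P}$‑a.e.\ $\omega$, i.e.\ $\liminf_n|s_n-s_0|=0$, contradicting $s_n\to+\infty$. Hence $\ell\neq 0$. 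Finally $\pi_{\gamma(\omega)}$ is $1$‑Lipschitz and $\pi_{\gamma(\omega)}(x)$ has coordinate $s_0$, so $d(Z_n x,x)\ge|s_n-s_0|$ for every $n$, and therefore $\lambda=\lim_n\tfrac1n d(Z_n x,x)\ge\lim_n\tfrac1n|s_n-s_0|=|\ell|>0$, which proves the theorem. The remaining points---the verification of the displayed cocycle identity, the equivariant measurable choice of $\gamma(\omega)$, and the identification of $\nui$ with the exit law of the reflected walk---are routine given the material developed earlier in the paper.
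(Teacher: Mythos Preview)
Your argument is essentially correct and follows the same broad scheme as the paper---build an integrable additive cocycle over an ergodic system, observe it is transient because the walk converges to the boundary, and invoke ``transient $\Rightarrow$ nonzero mean'' (Atkinson, equivalently the Guivarc'h--Raugi lemma the paper cites)---but the implementation is genuinely different. The paper works on the one-sided product $(\Omega\times\overline{X},\mathbb{P}\times\nui)$ with the skew shift and takes as cocycle the horofunction $H(\omega,\xi)=h_\xi(\omega_0 x)$; it first proves $|h_\xi(Z_n x)-d(Z_n x,x)|\le C$ for $\nui$-a.e.\ $\xi$ (using the rank one geodesic from $\xi$ to $z^{+}(\omega)$), so Birkhoff yields $\lambda=\int H$ directly, and transience of $h_\xi(Z_n x)$ forces $\int H>0$. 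You instead pass to the bilateral Bernoulli shift, select a rank one line $\gamma(\omega)$ joining $z^{-}(\omega)$ to $z^{+}(\omega)$, and use the signed displacement of the nearest-point projection as cocycle; this only yields $\lambda\ge|\ell|$, which suffices. The horofunction route buys you a cleaner identity (no selection, and $\lambda$ is computed exactly); your route is more geometric and avoids proving the Busemann approximation lemma.

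Two points you flag as routine do need care. First, the $G$-equivariant measurable selection of $\gamma(\omega)$: the parallel set of geodesics from $z^{-}$ to $z^{+}$ is isometric to $C\times\mathbb{R}$ with $C$ bounded (by rank one), so the circumcentre of $C$ gives a canonical line, equivariantly and measurably---but this deserves a sentence. Second, the claim $\pi_{\gamma(\omega)}(Z_n x)\to z^{+}(\omega)$: this is where contraction is really used, and it is not automatic from $Z_n x\to z^{+}(\omega)$ in the cone topology alone; one needs, for instance, that for a $C$-contracting geodesic any sequence converging to an endpoint has projections going to that endpoint (via the bounded-projection property of balls disjoint from $\gamma$). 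Once these are spelled out your proof goes through.
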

	\vspace{5mm}
	In order to prove Theorem \ref{measure thm}, we use a result from Papasoglu and Swenson about the dynamics on the Tits boundary \cite[Lemma 19]{papasoglu_swenson09}, stating that in the $\cat(0)$ case, the action of the group $G$ satisfies a $\pi$-convergence property: if $(g_n)_n$ is a sequence of isometries satisfying $g_n x \underset{n \rightarrow \infty}{\longrightarrow} \xi \in \bd X$ and $g_n^{-1} x \underset{n \rightarrow \infty}{\longrightarrow} \eta \in \bd X$, then for all compact subset $K \subseteq \partial X  - B_T(\eta, \pi)$, and all open set $U $ containing $\xi$, $g_n K \subseteq U $ for all $k $ large enough. It will then be useful to prove that the $\nu$-measure of $B_T(\eta, \pi)$ is zero (Lemma \ref{zero measure}), which we do by using a result from Maher and Tiozzo \cite[Lemma 4.5]{maher_tiozzo18}. Once we know that the stationary measure is unique, proving that the random walk $(Z_n x )_n$ converges to the boundary (Theorem \ref{convergence thm}) follows from a geometric result concerning rank one geodesics proven by Ballmann \cite[Lemma III.3.1]{ballman95}. Last, to show that the drift is positive (Theorem \ref{drift thm}), we have followed the strategy implemented in Guivarc'h and Raugi \cite{guivarch_raugi85}, see also \cite{benoist_quint16} and \cite{benoist_quint}. 
	\newline
	
	While we were writing this paper, it came to our attention that H. Petyt, D.~Spriano and A.~Zallum have proposed another approach to the subject of $\cat(0)$ actions. More precisely, given an action of a group $G$ on a $\cat(0)$ space $X$, it is possible to build a hyperbolic space $(X_L, d_L)$ out of $X$ using "curtains", in such a way that informations on the original action can be nicely translated to the actions on the derived hyperbolic space. Considering the results we already have for actions on hyperbolic spaces (e.g. \cite{maher_tiozzo18}), it is likely that we could use these results to deduce some of the properties we have investigated in this paper about random walks one $\cat(0)$ spaces.
	\newline
	
	Section \ref{background} is an introduction to the notions that we are going to use, and presents the general setting. In Section \ref{uniqueness measure section}, we prove Lemma \ref{zero measure} and Theorem \ref{measure thm}. In Section \ref{convergence random walk}, we prove Theorem \ref{convergence thm} stating that the random walk is convergent, which is the main result of this article. In Section \ref{drift section}, we give applications of the convergence, especially the positivity of the drift and geodesic tracking results.

	\begin{ackn}
		The author is grateful to Jean Lécureux for the weekly conversations and commentaries, and whose contribution to this article was invaluable. 
	\end{ackn}

	\section{Background}\label{background}
	
	\subsection{Random walks and general setting}
	
	Let $G$ be a discrete countable group and $\mu \in \prob(G)$ a probability measure on $G$. Throughout the article we will assume that $\mu $ is admissible, i.e. $\supp(\mu)$ generates $G$ as a semigroup. Let $(\Omega, \mathbb{P}) $ be the probability space $(G^{\mathbb{N}}, \delta_e \times \mu^{\mathbb{N^\ast}})$. The application 
	\begin{equation*}
		(n, \omega) \in \mathbb{N} \times \Omega \mapsto Z_n(\omega) = \omega_1 \omega_2 \dots \omega_n,
	\end{equation*}
	where $\omega$ is chosen according to the law $\mathbb{P}$, defines the random walk on $G$ generated by the measure $\mu$.

	Let now $(X,d)$ be a proper $\cat(0)$ metric space, on which $G$ acts by isometries. If the reader wants a detailed introduction to $\cat (0) $ spaces, the main references that we will use are \cite{bridson_haefliger99} and \cite{ballman95}. We recall that the boundary $ \bd X$ of a $\cat (0) $ space $X$ is the set of equivalent classes of rays $\sigma : [0, \infty) \rightarrow X$, where two rays $\sigma_1, \sigma_2$ are equivalent if they are asymptotic, i.e. if $d(\sigma_1(t), \sigma_2 (t))$ is bounded uniformly in $t$. 
	
	Given two points on the boundary $\xi$ and $\eta$, if there exists a geodesic line $\sigma : \mathbb{R} \rightarrow X$ such that the geodesic ray $\sigma_{[0, \infty)}$ is in the class of $\xi$ and the geodesic ray $t \in [0, \infty) \mapsto \sigma(-t)$ is in the class of $\eta$, we will say that the points $\xi $ and $\eta$ are joined by a geodesic line. The reader should be aware that in general, such a geodesic need not exist between any two points of the boundary, as can be seen in $\mathbb{R}^2$. A point $\xi$ of the boundary that is called a \textit{visibility point} if, for all $\eta \in \bd X - \{\xi\}$, there exists a geodesic from $\xi$ to $\eta$. We will see in the next section a criterion to prove that a given boundary point is a visibility point. 
	
	An important feature in $\cat(0)$ spaces is the existence of closest-point projections on convex subsets. More precisely, given a closed convex subset $C$ in a proper $\cat(0)$ space, there exists a map $p_C : X \rightarrow C$ such that $p(x)$ minimizes the distance $d(x,C)$  \cite[Proposition 2.4]{bridson_haefliger99}. This map is a retraction of $X$ onto $C$ and is distance decreasing: for all $x, y \in X$, 
	\begin{equation*}
		d(p_C(x), p_C(y )) \leq d(x,y). 
	\end{equation*}
	Now, given a closed ball $B:= \overline{B}(x_0, r)$, the projection $p_r : X \rightarrow \overline{B}(x_0, r) $ can actually be extended to $\overline{X}$, by identifying any point $\xi$ of the boundary with the geodesic ray $\sigma$ issuing from $x_0$ in the class of $\xi$. In this setting, if $\sigma(0) = x_0$, we define $p_B(\xi) = \sigma (r)$. 
	Following the notations in \cite[Chapter II.8]{bridson_haefliger99}, the visual topology on $\overline{X}$ is defined by a basis of open sets $U(c, r, \varepsilon)$, where $c$ is a geodesic ray, $r,\varepsilon>0$, and  
	\begin{equation*}
		U(c, r, \varepsilon) := \{ x \in \overline{X} \ | \ d(x, c(r)) >r, d(p_r(x), c(r)) < \varepsilon)\}, 
	\end{equation*}
	where we called $p_r$ the projection on the closed (convex) ball $\overline{B}(c(0), r)$ of centre $c(0)$ and radius $r$. Given $x \in X$ and $\xi \in \overline{X}$, there is a unique geodesic ray (or segment) $c$ joining $x$ to $\xi$, so we will write alternatively $U(x, \xi, r, \varepsilon)$ for $U(c, r, \varepsilon)$. 
	
	The following proposition is taken from \cite[Proposition II. 8.8]{bridson_haefliger99}, and implies that the visual topology does not depend on the basepoint. It will be useful later. 
	
	\begin{prop}[{\cite[Proposition II. 8.8]{bridson_haefliger99}}]\label{continuite proj}
		Let $x, x' \in X$, and $r >0 $. let $c$ and $c'$ be the geodesic rays issuing from $x$ and $x'$ respectively such that $c(\infty)=c'(\infty) = \xi$. Let $p_r : \overline{X} \rightarrow \overline{B}(x, r)$ be the projection of $\overline{X}$ onto $\overline{B}(x, r)$. Then, for all $\varepsilon >0 $, there exists $R= R(r, d(x, x'), \varepsilon)>0 $ such that for all $R' \geq R$, $p_{r} (U(c', R', \varepsilon/3)) \subseteq B(c(r), \varepsilon)$. In particular, for all $R' \geq R$, $U(c', R', \varepsilon/3)$ is contained in $U(c, r, \epsilon )$. 
	\end{prop}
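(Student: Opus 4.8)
I would prove the first assertion of the proposition — that for every $\varepsilon>0$ there is $R=R(r,d(x,x'),\varepsilon)>0$ with $p_r(U(c',R',\varepsilon/3))\subseteq B(c(r),\varepsilon)$ whenever $R'\ge R$ — by a direct estimate resting on two facts. The first is the convexity of the metric along pairs of geodesics in a $\cat(0)$ space. The second is an immediate consequence: since $c$ and $c'$ are asymptotic rays issuing respectively from $x$ and $x'$, the function $t\mapsto d(c(t),c'(t))$ is convex and bounded on $[0,\infty)$, hence non-increasing, so $d(c(t),c'(t))\le d(c(0),c'(0))=d(x,x')$ for all $t\ge0$. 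The inclusion $U(c',R',\varepsilon/3)\subseteq U(c,r,\varepsilon)$ then requires, in addition, the inequality $d(\,\cdot\,,c(r))>r$ from the definition of $U(c,r,\varepsilon)$, which I treat at the end. Throughout, $p_{R'}$ denotes the projection onto $\overline{B}(x,R')$ and $p'_{R'}$ the projection onto $\overline{B}(x',R')$ (the latter is the one implicit in the definition of $U(c',R',\varepsilon/3)$).

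Fix $\varepsilon>0$ and let $R'$ be large, with the constraints — all depending only on $r$, $d(x,x')$, $\varepsilon$ — collected as they appear. Let $y\in U(c',R',\varepsilon/3)$ and first assume $y\in X$. Since $R'>\varepsilon/3$, the condition $d(p'_{R'}(y),c'(R'))<\varepsilon/3$ forces $d(x',y)>R'$, so $q:=p'_{R'}(y)$ is the point of $[x',y]$ at distance $R'$ from $x'$, with $d(q,c'(R'))<\varepsilon/3$; together with $d(c'(R'),c(R'))\le d(x,x')$ this gives $d(q,c(R'))<\varepsilon/3+d(x,x')$. From $d(y,c'(R'))>R'$ and $d(y,c'(R'))\le d(y,q)+d(q,c'(R'))$ we get $d(y,q)>R'-\varepsilon/3$, hence $d(x,y)\ge d(x',y)-d(x,x')=R'+d(y,q)-d(x,x')>2R'-\varepsilon/3-d(x,x')$; in particular $d(x,y)>R'$ once $R'$ is large, so $p_{R'}(y)$ is the point of $[x,y]$ at distance $R'$ from $x$.

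Comparing the two geodesic segments $[x,y]$ and $[x',y]$, which share the endpoint $y$ and whose lengths differ by at most $d(x,x')$, the convexity of the metric gives $d(p_{R'}(y),q)\le 3\,d(x,x')$. Therefore
\[
 d(p_{R'}(y),c(R'))\le d(p_{R'}(y),q)+d(q,c(R'))<4\,d(x,x')+\varepsilon/3 .
\]
Let $\gamma\colon[0,d(x,y)]\to X$ be the geodesic from $\gamma(0)=x$ to $\gamma(d(x,y))=y$, so $\gamma(r)=p_r(y)$ and $\gamma(R')=p_{R'}(y)$. The function $t\mapsto d(\gamma(t),c(t))$ is convex on $[0,d(x,y)]$ and vanishes at $0$, so for $0\le r\le R'\le d(x,y)$,
\[
 d(p_r(y),c(r))=d(\gamma(r),c(r))\le \frac{r}{R'}\,d(\gamma(R'),c(R'))=\frac{r}{R'}\,d(p_{R'}(y),c(R'))<\frac{r}{R'}\,(4\,d(x,x')+\varepsilon/3).
\]
Choosing $R=R(r,d(x,x'),\varepsilon)$ large enough that $\frac{r}{R}(4\,d(x,x')+\varepsilon/3)<\varepsilon$ (and large enough to meet the preceding ``$R'$ large'' requirements), we obtain $d(p_r(y),c(r))<\varepsilon$ for all $R'\ge R$, i.e. $p_r(y)\in B(c(r),\varepsilon)$. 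When $y\in\bd X$ the same argument applies and is simpler: $[x,y]$ and $[x',y]$ are now asymptotic rays, so $d(p_{R'}(y),q)\le d(x,x')$ directly, and the conditions involving $d(y,\,\cdot\,)>R'$ hold automatically.

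For the inclusion $U(c',R',\varepsilon/3)\subseteq U(c,r,\varepsilon)$ it remains to check $d(y,c(r))>r$. For $y\in\bd X$ this is automatic; for $y\in X$ the bound $d(x,y)>2R'-\varepsilon/3-d(x,x')$ gives $d(y,c(r))\ge d(x,y)-d(x,c(r))=d(x,y)-r>2R'-\varepsilon/3-d(x,x')-r$, which exceeds $r$ once $R$ is enlarged so that $R>r+d(x,x')/2+\varepsilon/6$. This completes the argument. The one step I expect to require real care is the choice of the \emph{intermediate} radius $R'$ at which the convexity inequality for $t\mapsto d(\gamma(t),c(t))$ is applied: estimating $d(p_r(y),c(r))$ from the far endpoint, by $\frac{r}{d(x,y)}\,d(y,c(d(x,y)))$, is useless, since the geodesic $[x,y]$ may well diverge from the ray $c$ beyond radius $R'$; it is exactly at radius $R'$ that the hypothesis $y\in U(c',R',\varepsilon/3)$ — transported to the basepoint $x$ via $d(c(t),c'(t))\le d(x,x')$ and the coarse comparison $d(p_{R'}(y),q)\le 3\,d(x,x')$ — supplies the control one needs. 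Everything else is routine $\cat(0)$ convexity.
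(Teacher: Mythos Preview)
The paper does not supply its own proof of this proposition: it is quoted verbatim from Bridson--Haefliger \cite[Proposition II.8.8]{bridson_haefliger99} and used as a black box, so there is no in-paper argument to compare against. Your proof is correct and is essentially the standard one (the same convexity estimates appear in Bridson--Haefliger's own proof): bound $d(c(t),c'(t))$ by $d(x,x')$ using that asymptotic rays have non-increasing separation, transfer the control at radius $R'$ from the $x'$-based projection to the $x$-based one via convexity of the two segments $[x,y]$ and $[x',y]$ sharing the endpoint $y$, and then scale down from radius $R'$ to radius $r$ using convexity of $t\mapsto d(\gamma(t),c(t))$ along rays from $x$. Your closing remark about why the intermediate radius $R'$ is the right place to apply the estimate is exactly the point of the argument.
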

	\begin{figure}
		\centering
		\begin{center}
			\begin{tikzpicture}[scale=1.5]
				\draw (0,0) -- (3, 3)  ;
				\draw (1, 2) to[bend right = 50] (3, 1) node[above right] {$U(x, \xi, r, \epsilon )$};
				\draw (2, 2.7) to[bend right = 80] (3, 2) node[above right]  {$U(x', \xi, R', \varepsilon/3)$};
				\draw (1, -1)  to[bend left=10] (3,3)  ;
				\draw (0,0) node[below left]{$x$} ;
				\draw (1,-1) node[below left]{$x'$} ;	
				\draw (3,3) node[above]{$\xi$};
			\end{tikzpicture}
		\end{center}
		\caption{Illustration of Proposition \ref{continuite proj}.}
	\end{figure}
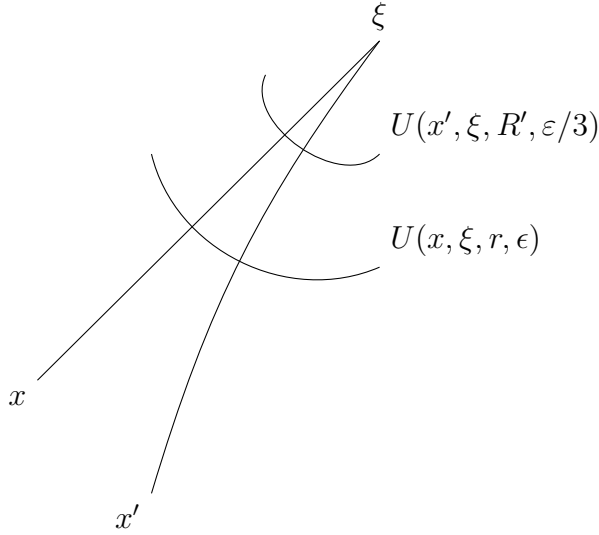

	When $X$ is a proper space, the space $\overline{X} = X \cup \partial X $ is a compactification of $X$, that is, $\overline{X}$ is compact and $X$ is an open and dense subset of $\overline{X}$. We recall that the action of $G$ on $X$ extends to an action on $\bd X$ by homeomorphisms. 
	
	Another equivalent construction of the boundary can be done using horofunctions. If $x_n \rightarrow \xi \in \bd X$, we denote by $h_\xi^{x} : X \mapsto \mathbb{R}$ the horofunction given by 
	\begin{equation*}
		h_\xi^{x}(z) = \lim_n d(x_n, z) - d(x_n, x).
	\end{equation*}
	It is a standard result in $\cat(0)$ geometry (see for example \cite[Proposition II.2.5]{ballman95}) that this limit exists and that given any basepoint $x$, a horofunction characterizes the boundary point $\xi $.

	\subsection{Rank one elements}\label{rank one section}
	
	Let $g \in G$. We say that $g$ is a \textit{semisimple} isometry if its displacement function $ x \in X \mapsto \tau_g(x) = d(x , gx)$ has a minimum in $X$. If this minimum is non-zero, it is a standard result (see for example \cite[Proposition II.3.3]{ballman95}) that the set on which this minimum is obtained is of the form $C \times \mathbb{R}$, where $C$ is a closed convex subset of X. On the set $\{c\}\times \mathbb{R}$ for $c \in C$, $g$ acts as a translation, which is why $ g$ is called \textit{axial} and the subset $\{c\}\times \mathbb{R}$ is called an \textit{axis} of $g$. A \textit{flat half-plane} in $X$ is defined as a euclidean half plane isometrically embedded in $X$.
	
	\begin{Def}
		We say that a geodesic in $X$ is \textit{rank one} if it does not bound a flat half-plane. If $g$ is an axial isometry of $X$, we say that $g$ is rank one if no axis of $g$ bounds a flat half-plane. 
	\end{Def}
	
	\begin{rem}\label{flat strip}
		Let $g$ be a rank one isometry, and let $\sigma$ be one of its axes. Then there exists $R \geq 0 $ such that $\sigma $ does not bound a flat strip of width $R$. 
	\end{rem}
	
	More information on rank one isometries and geodesics can be found in \cite[Section III. 3]{ballman95}, and more recently in \cite{caprace_fujiwara10} and in \cite{bestvina_fujiwara09}. If $X$ is a proper space, M.~Bestvina and K.~Fujiwara showed in \cite{bestvina_fujiwara09} that an isometry is rank one if and only if it induces a contraction property on its axes. More precisely, an isometry of $X$ has rank one if and only if there exists $C \geq 0$ such that one of its axis $\sigma$ is $C$-contracting: for every metric ball $B$ disjoint from the geodesic $\sigma$, the projection $\pi_\sigma (B)$ of the ball onto $\sigma$ has diameter at most $C$. 
	
	\begin{Def}
		We say that the action $G \curvearrowright X$ of a rank one group $G$ on a $\cat (0) $ space $X$ is \textit{non-elementary} if $G$ neither fixes a point in $\bd X$ nor stabilizes a geodesic line in $X$. 
	\end{Def}
	
	To justify this definition, we use a result from Caprace and Fujiwara in \cite{caprace_fujiwara10}. What follows comes from the aforementioned paper. 
	
	\begin{Def}
		Let $g_1, \, g_2 \in G$ be axial isometries of $G$, and fix $x_0 \in X$. The elements $g_1, g_2 \in G$ are called independent if the map 
		\begin{equation}
			\mathbb{Z} \times \mathbb{Z} \rightarrow [0, \infty) : (m,n) \mapsto d(g_1^m x_0, g_2^nx_0)
		\end{equation}
		is proper. 
	\end{Def}
	
	\begin{rem}
		In particular, the fixed points of two independent axial elements form four distinct points of the visual boundary. 
	\end{rem}
	
	The following result was proven by P-E.~Caprace and K.~Fujiwara in \cite{caprace_fujiwara10}. 
	
	\begin{prop}[{\cite[Proposition 3.4]{caprace_fujiwara10}}]\label{non elem caprace fuj}
		Let $X$ be a proper $\cat (0) $ space and let $G < \iso (X)$. Assume that $G$ contains a rank one element. Then exactly one of the following assertions holds: 
		\begin{enumerate}
			\item \label{elem} $G$ either fixes a point in $\bd X$ or stabilizes a geodesic line. In both cases, it possesses a subgroup of index at most 2 of infinite Abelianization. Furthermore, if $X$ has a cocompact isometry group, then $\overline{G} < \iso (X)$ is amenable. 
			
			\item \label{non elem} G contains two independent rank one elements. In particular, $\overline{G}$ contains a discrete non-Abelian free subgroup. 
		\end{enumerate}
	\end{prop}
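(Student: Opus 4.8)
The plan is to dispose of the two alternatives by elementary boundary dynamics, and then, assuming the first fails, to manufacture two independent rank one elements by a ping--pong argument using North--South dynamics together with the contracting property. That the alternatives are mutually exclusive is immediate: by \cite[Lemma III.3.3]{ballman95} a rank one isometry fixes \emph{exactly} two points of $\bd X$, so two independent rank one elements of $G$ would account for four pairwise distinct fixed points; but a group fixing a point of $\bd X$ or stabilising a geodesic line preserves a subset of $\bd X$ of size at most $2$, inside which the fixed-point pair of every rank one element of $G$ must sit --- incompatible with two independent ones. So the whole content is that at least one alternative holds.

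Fix a rank one element $g\in G$ with axis $\sigma$ and endpoints $g^{+},g^{-}\in\bd X$. The first observation is that if $\{g^{+},g^{-}\}$ is $G$-invariant then we are in case~\ref{elem}: the parallel set of the rank one axis $\sigma$ --- which is nothing but the union of all bi-infinite geodesics with endpoint pair $\{g^{+},g^{-}\}$, hence preserved by any isometry preserving $\{g^{+},g^{-}\}$ --- is a bounded tubular neighbourhood of $\sigma$ (a line parallel to $\sigma$ bounds a flat strip with it, of width bounded by Remark~\ref{flat strip}), it splits canonically as $Y\times\mathbb{R}$ with $Y$ bounded, so $G$ preserves this splitting and stabilises the geodesic line $\{y_{0}\}\times\mathbb{R}$, where $y_{0}$ is the circumcentre of $Y$. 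So assume we are \emph{not} in case~\ref{elem}; then $\{g^{+},g^{-}\}$ is not $G$-invariant, and I claim there is $h\in G$ with $h\{g^{+},g^{-}\}\cap\{g^{+},g^{-}\}=\emptyset$. To prove it I would look at the $G$-orbit $\mathcal{O}$ of the pair $\{g^{+},g^{-}\}$, which has at least two members. If two members are disjoint the claim holds. Otherwise the members of $\mathcal{O}$ pairwise intersect, and there are two cases. If they share a common point $\xi_{0}$ (necessarily $\xi_{0}\in\{g^{+},g^{-}\}$), a short check on the possible images of the pair under elements of $G$ shows that every element of $G$ either fixes $\xi_{0}$ or exchanges $g^{+}$ and $g^{-}$, and that the subgroup fixing $\xi_{0}$ in fact fixes both $g^{\pm}$; hence $G$ fixes $\xi_{0}$ or preserves $\{g^{+},g^{-}\}$, contradicting our standing assumption. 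If the members of $\mathcal{O}$ intersect pairwise but have empty total intersection, then $\mathcal{O}$ is exactly the set of three two-element subsets of a three-point set $\{a,b,c\}\subseteq\bd X$, so $g$ permutes $\{a,b,c\}$ while fixing two of them, hence fixes all three --- impossible, since $g$ fixes only two boundary points. This proves the claim.

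Set $g':=hgh^{-1}$, which is rank one (conjugation invariance) with endpoints $hg^{+},hg^{-}$, so that $g$ and $g'$ together have four pairwise distinct fixed points on $\bd X$. I would finish with ping--pong: using the North--South dynamics of $g$ and $g'$ (uniform convergence on compacta away from the repelling point), choose pairwise disjoint open neighbourhoods $U^{\pm}\ni g^{\pm}$, $V^{\pm}\ni g'^{\pm}$ and an integer $N$ so large that $g^{\pm N}(\bd X\setminus U^{\mp})\subseteq U^{\pm}$ and $g'^{\pm N}(\bd X\setminus V^{\mp})\subseteq V^{\pm}$; the ping--pong lemma then gives that $a:=g^{N}$ and $b:=g'^{N}$ generate a rank-two free subgroup, discrete in $\iso(X)$ because the orbit map $w\mapsto wx_{0}$ turns out to be a quasi-isometric embedding --- this is the ``in particular'' clause of~\ref{non elem}. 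Finally $a$ and $b$ are rank one (powers of rank one elements) and independent: taking $x_{0}\in\sigma$, the point $a^{m}x_{0}$ moves along the contracting geodesic $\sigma$ while $b^{k}x_{0}$ stays within a fixed distance of the contracting geodesic $g'\sigma$, whose two endpoints avoid those of $\sigma$. Using the contracting property of $\sigma$ one checks that the closest-point projection onto $\sigma$ of the whole line $g'\sigma$ is bounded --- otherwise $g'\sigma$ would be asymptotic to $\sigma$ and share an endpoint with it --- and since $\pi_{\sigma}$ is $1$-Lipschitz and fixes $\sigma$ pointwise, this forces $d(a^{m}x_{0},b^{k}x_{0})\to\infty$ as $|m|+|k|\to\infty$, i.e.\ the map $(m,k)\mapsto d(a^{m}x_{0},b^{k}x_{0})$ is proper. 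Thus $G$ contains two independent rank one elements: case~\ref{non elem}. I expect this last independence verification --- essentially the divergence of a contracting geodesic from any geodesic not sharing an endpoint with it, which genuinely uses the contracting characterisation of rank one (\cite{bestvina_fujiwara09}) rather than just North--South dynamics --- to be the main technical point; the orbit combinatorics and the ping--pong bookkeeping are routine.

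It remains to note the side statements in case~\ref{elem}. If $G$ fixes $\xi\in\bd X$, then $\xi\in\{g^{+},g^{-}\}$ and the Busemann character $f\mapsto h_{\xi}^{x}(fx)$ is a homomorphism $G\to\mathbb{R}$ with value $-\tau_{g}\neq0$ on $g$ (which translates along $\sigma$ towards or away from $\xi$), so the abelianisation of $G$ surjects onto an infinite subgroup of $\mathbb{R}$ and is thus infinite; here the index-$2$ subgroup is $G$ itself. If instead $G$ stabilises a geodesic line, apply this to the subgroup of index at most $2$ fixing both its endpoints, which still contains a rank one element (replace $g$ by $g^{2}$ if necessary). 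Finally, when $\iso(X)$ is cocompact it is a locally compact group, and point stabilisers in $\iso(X)$ are compact since $X$ is proper; hence the closed stabiliser of a geodesic line has a compact normal subgroup --- its pointwise stabiliser, contained in a point stabiliser --- with quotient embedding into $\iso(\mathbb{R})$, so it is amenable, and therefore so is $\overline{G}$; the boundary-fixing case follows the same way from the structure theory of isometry groups of proper cocompact $\cat(0)$ spaces.
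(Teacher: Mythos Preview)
The paper does not prove this proposition: it is quoted verbatim from \cite[Proposition~3.4]{caprace_fujiwara10} and used as a black box. So there is no ``paper's proof'' to compare against; I can only assess your argument on its own merits.

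Your sketch follows the standard route and is essentially correct. The reduction ``$\{g^{+},g^{-}\}$ $G$-invariant $\Rightarrow$ $G$ stabilises a line'' via the parallel set and its circumcentre is fine. The combinatorial claim that one can find $h\in G$ with $h\{g^{+},g^{-}\}$ disjoint from $\{g^{+},g^{-}\}$ is correct, though your ``short check'' in the common-point subcase silently uses that the index-$\le 2$ subgroup fixing $\xi_{0}$ is normal, and then that conjugating by a swapping element forces it to fix the other endpoint as well; this deserves one more sentence. The ping--pong and independence arguments are standard; for the latter you do not actually need the full contracting machinery: since $a^{m}x_{0}\in\sigma$ and $d(b^{k}x_{0},h\sigma)$ is constant in $k$, it suffices to note that $d(h\sigma(t),\sigma)\to\infty$ as $|t|\to\infty$ (because $h\sigma$ shares no endpoint with $\sigma$), whence $d(a^{m}x_{0},b^{k}x_{0})\ge d(b^{k}x_{0},\sigma)\to\infty$ as $|k|\to\infty$, and symmetrically in $m$.

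The one genuine soft spot is the amenability clause in case~\ref{elem} when $G$ fixes a boundary point but does \emph{not} obviously stabilise a line. Your line-stabiliser argument (compact kernel, quotient in $\iso(\mathbb{R})$) is clean, but for a mere boundary-point stabiliser in $\iso(X)$ with $X$ cocompact, amenability is a nontrivial theorem (Adams--Ballmann, or the Caprace--Monod structure theory); ``follows the same way'' undersells this. Since the statement is being imported from \cite{caprace_fujiwara10} anyway, it is acceptable to defer this clause to the literature, but you should flag it explicitly rather than suggest it is routine.
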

	
	As a consequence, the action $G \curvearrowright X$ of a rank one group $G$ on a $\cat (0) $ space $X$ is non-elementary if and only if alternative \ref{non elem} of the previous Proposition holds. 
	
	The next Lemma comes from \cite{hamenstadt09}, and extends a result from Ballmann and Brin \cite{ballmann_brin95}. It is a fundamental result on the dynamics induced by rank one isometries. 
	
	\begin{thm}[{\cite[Lemma 4.4]{hamenstadt09}}] \label{hamenstadt}
		An axial isometry $f$ in $G$ is rank one if and only if $f$ acts with North-South dynamics with respect to its fixed points $f^{-}$ et $f^{+}$ : for every neighbourhood $V$ of $f^{-}$ and $U$ of $f^{+}$, there exists $k_0 \geq 0 $ such that $f^{k} (\bd X  - V) \subseteq U$  and $f^{-k} (\bd X  - U) \subseteq V$ for all $k \geq k_0$. 
	\end{thm}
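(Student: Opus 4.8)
I would prove the two implications separately. For "$f$ rank one $\Rightarrow$ North--South" I would run the contraction characterization recalled above (Bestvina--Fujiwara), so that an axis $\sigma$ of $f$ is $C$-contracting, and then exploit that $f$ translates $\sigma$ to push boundary points toward $f^{+}$. For the converse I would argue by contraposition, producing from a flat half-plane bounded by an axis an explicit boundary point on which $f$ manifestly fails to have North--South dynamics.

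\textbf{Forward direction.} Suppose $f$ is rank one with axis $\sigma$, parametrized so that $\sigma(\pm\infty)=f^{\pm}$ and $f\,\sigma(t)=\sigma(t+\ell)$, where $\ell>0$ is the translation length; by Bestvina--Fujiwara $\sigma$ is $C$-contracting for some $C\ge 0$, hence a Morse geodesic. I would use the standard consequences of $C$-contraction, all essentially contained in \cite[Lemma III.3.1]{ballman95}: each $\zeta\in\bd X$ has a ``projection parameter'' $s(\zeta)\in\mathbb{R}$ such that the geodesic from $\sigma(a)$ to $\zeta$ fellow-travels $\sigma|_{[a,\,s(\zeta)]}$ up to error depending only on $C$, and $\zeta$ lies in a cone neighbourhood of $f^{-}$ that shrinks as $s(\zeta)\to-\infty$. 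Fixing neighbourhoods $V$ of $f^{-}$ and $U$ of $f^{+}$, the second fact gives $a=a(V)$ with $s(\eta)\ge -a$ for all $\eta\in\bd X\setminus V$. Since $f^{k}$ translates $\sigma$ by $k\ell$ one has $s(f^{k}\eta)=s(\eta)+k\ell$, so the geodesic from $\sigma(0)$ to $f^{k}\eta$ fellow-travels $\sigma|_{[0,\,s(\eta)+k\ell]}$; as $s(\eta)+k\ell\ge k\ell-a\to\infty$ uniformly in $\eta\notin V$, this forces $f^{k}\eta$ into any prescribed cone neighbourhood of $f^{+}$ once $k$ is large, uniformly in $\eta$. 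The argument for $f^{-k}$ is symmetric; this is exactly Ballmann's \cite[Lemma III.3.3]{ballman95}, which I would cite after indicating the mechanism.

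\textbf{Converse.} Assume $f$ is axial but not rank one, so some axis $\sigma$ (again $\sigma(\pm\infty)=f^{\pm}$, $f\,\sigma(t)=\sigma(t+\ell)$) bounds a flat half-plane $H$, identified isometrically with $\{(x,y):y\ge 0\}$ so that $\sigma(t)=(t,0)$. Put $o=\sigma(0)=(0,0)$ and let $\rho_{0}\colon t\mapsto(0,t)$ be the ray perpendicular to $\sigma$ at $o$, with endpoint $\xi_{0}:=\rho_{0}(\infty)\in\bd X$; inside the flat $H$ the point $\xi_{0}$ is at angle $\pi/2$ from each of $f^{\pm}$, so $\xi_{0}\notin\{f^{+},f^{-}\}$. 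For each $n\ge 1$ the set $f^{n}H$ is again a flat half-plane with boundary line $f^{n}\sigma=\sigma$; it contains $o$, and the ray $\rho_{n}$ issuing from $o$ perpendicular to $\sigma$ inside $f^{n}H$ is asymptotic to $f^{n}\xi_{0}$, since all ``vertical'' rays inside one flat half-plane define the same boundary point. Because $\sigma^{+}$ represents $f^{+}$, the restriction of the Busemann function $b_{f^{+}}$ to each $f^{n}H$ is, in these coordinates, $(x,y)\mapsto -x$ normalized so that $b_{f^{+}}(o)=0$; hence $b_{f^{+}}(\rho_{n}(t))=0$ for all $n$ and all $t\ge 0$. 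If $f$ had North--South dynamics then $f^{n}\xi_{0}\to f^{+}$ (letting the neighbourhood of $f^{-}$ shrink, which is legitimate since $\xi_{0}\neq f^{-}$), so the rays $\rho_{n}$ from the fixed basepoint $o$ would converge uniformly on compact sets to $\sigma^{+}$, forcing $b_{f^{+}}(\rho_{n}(1))\to b_{f^{+}}(\sigma(1))=-1$ and contradicting $b_{f^{+}}(\rho_{n}(1))=0$. Thus $f$ does not act with North--South dynamics, proving the contrapositive.

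\textbf{Main obstacle.} I expect the delicate point to be the uniformity in the forward direction: converting the qualitative statement ``a geodesic whose $\sigma$-projection reaches far in the positive direction ends near $f^{+}$'' into the explicit neighbourhood-basis estimates of the cone topology (the sets $U(o,f^{+},R,\varepsilon)$), with a single $k_{0}$ working simultaneously for all $\eta\notin V$. This bookkeeping is precisely what Ballmann's rank-one lemmas supply, so in a write-up I would lean on \cite[Lemmas III.3.1 and III.3.3]{ballman95} rather than reprove the estimate; the converse, by contrast, is short and self-contained once the flat half-plane and the Busemann computation are set up.
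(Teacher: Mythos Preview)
The paper does not supply its own proof of this statement: it is quoted from \cite[Lemma 4.4]{hamenstadt09} (with the forward implication going back to Ballmann--Brin), and is used as a black box. There is therefore no in-paper argument to compare against.

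That said, your proposal is correct and is essentially the standard proof. The forward direction is precisely \cite[Lemma III.3.3]{ballman95}, and citing it after sketching the contracting mechanism is appropriate; the uniformity you flag as the main obstacle is indeed handled by those lemmas. Your converse is clean: the perpendicular endpoint $\xi_0$ in the flat half-plane $H$ is distinct from $f^{\pm}$, and the Busemann identity $b_{f^+}(\rho_n(1))=0$ holds because each $f^nH$ is a convex, isometrically embedded half-plane bounded by $\sigma$, so the $X$-distance from $\rho_n(1)$ to $\sigma(t)$ is computed entirely inside $f^nH$. This blocks $\rho_n(1)\to\sigma(1)$ and hence $f^n\xi_0\to f^+$. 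One small remark worth making explicit in a write-up: when $fH=H$ the argument simply collapses to $f^n\xi_0=\xi_0$ for all $n$, exhibiting a third boundary fixed point directly; your Busemann computation already covers this uniformly since then $\rho_n=\rho_0$.
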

	
	\subsection{Tits metric on the boundary}\label{tits metric subsection}
	
	Let us now recall some results about Tits geometry. It is a useful tool to detect flats in $\cat(0)$ spaces. The following definitions and properties can be found in \cite[Section II.4]{ballman95}, and in \cite{bridson_haefliger99}. 
	
	Let $\sigma_1 : [0, C_1] \rightarrow X$ and $ \sigma_2 : [0, C_2] \rightarrow X$ be two geodesic segments in $X$ emanating from the same basepoint $\sigma_1 (0 ) = \sigma_2(0) = x$. For every $(s, t) \in (0, C_1) \times (0,C_2)$, there exists a euclidean comparison triangle $\overline{\Delta}_{s,t}$ of the triangle $\Delta_{s,t} $ in $X$ spanned by $(x, \sigma_1 (s), \sigma_2(t))$. Write $\overline{\angle}_{\overline{x}} (\overline{\sigma_1} (s), \overline{\sigma_2} (t) )$ the angle at $\overline{x}$ of the comparison triangle $\overline{\Delta}_{s,t}$. Then by the $\cat (0)$ inequality,  $\overline{\angle}_{\overline{x}} (\overline{\sigma_1} (s), \overline{\sigma_2} (t) )$ is monotonically decreasing and we can define 
	\begin{equation*}
		\angle (\sigma_1, \sigma_2) = \lim_{s, t \rightarrow 0} \overline{\angle}_{\overline{x}} (\overline{\sigma_1} (s), \overline{\sigma_2} (t) ). 
	\end{equation*}
	
	Since $X$ is uniquely geodesic, for any triple $x, y, z \in X$, there exist exactly one geodesic segment $\sigma_1$ (resp.$ \sigma_2$) from $x$ to $y$ (resp. from $x$ to $z$), and we define the angle $\angle_x (y,z) $ at $x$ between $y$ and $z$  as $ \angle (\sigma_1, \sigma_2)$. 
	If $x_n \rightarrow \xi \in \bd X$, $y_p \rightarrow \eta \in \bd X $ in the visual topology, one can extend the notion of angle between points in the boundary by 
	\begin{equation*}
		\angle_x (\xi, \eta) = \lim_{n, p \rightarrow \infty} \angle_x (x_n, y_p). 
	\end{equation*}
	It turns out that the $\angle_x (\xi, \eta)$ does not depend on the choice of the sequences $(x_n)_n$ and $(y_p)_p$. 
	Finally, we define $\angle : \overline{X} \times \overline{X} \rightarrow [0, \pi] $ the angular metric on $\overline{X} $ by 
	\begin{equation*}
		\angle (\xi, \eta) = \sup_{x \in X} \angle_x(\xi, \eta). 
	\end{equation*}
	\begin{rem}
		For example, given two points on the boundary $\xi$ and $\eta$, if there exists a geodesic $\sigma $ joining them, then the above supremum is attained on $\sigma$ and $\angle(\xi, \eta) = \pi$. 
	\end{rem}
	
	The Tits metric on the boundary $d_T : \overline{X} \times \overline{X} \rightarrow \mathbb{R} \cup \{\infty\}$ is the length metric associated to $\angle (.,.)$. We denote by $B_T(\xi, r)$ the closed  ball of radius $r$ and centre $\xi$, defined by $B_T(\xi, r) := \{ \eta \in \bd X : d_T(\xi, \eta) \leq r\}$. 
	The following theorem summarizes important properties of the Tits metric.  
	
	\begin{thm}[{\cite[Theorem II.4.11]{ballman95}}]\label{tits}
		Let $X$ be a proper $\cat (0)$ space. Then $(\partial_T X, d_T)$ is a complete $\cat (1) $ space. Moreover, for all $\eta, \, \xi \in \bd X$ : 
		\begin{enumerate}
			\item if there is no geodesic in $X$ from $\xi $ to $\eta$, then $d_T(\xi, \eta) = \angle (\xi, \eta) \leq \pi$. 
			\item If $\angle (\xi, \eta) < \pi$, there is no geodesic in $X$ joining $\xi $ to $\eta$ and there exists a unique geodesic (for the Tits metric) from $\xi$ to $\eta $ in $\partial_T X$. 
			\item If there is a geodesic $\sigma$ in $X$  from $\xi $ to $ \eta$, then $d_T(\xi, \eta) \geq \pi $, with equality if and only if $\sigma$ bounds a flat half-plane. 
			\item \label{semi continuite} If $(\xi_n)$ and $(\eta_n)$ are two sequences in $\bd X$, such that $\xi_n \rightarrow \xi \in \bd X$ and $\eta_n \rightarrow \eta\in \bd X$ in the visual topology, then $d_T(\xi, \eta) \leq \liminf_{n\rightarrow \infty} d_T(\xi_n, \eta_n)$. 
		\end{enumerate}
	\end{thm}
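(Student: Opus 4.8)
Although Theorem~\ref{tits} is classical, I indicate the strategy I would follow; everything is drawn from the interior $\cat(0)$ geometry of $X$ by a single device, comparing Alexandrov angles at points pushed far out along geodesic rays and passing to the limit using properness. The order would be: first record that $\angle$ is a metric on $\bd X$ and that, for rays $c_\xi,c_\eta$ issuing from a common point $x$, the comparison angle $\overline{\angle}_x(c_\xi(t),c_\eta(t))$ is monotone in $t$ with limit the Tits angle $\angle(\xi,\eta)$, equivalently $2\sin(\angle(\xi,\eta)/2)=\lim_t \tfrac1t\, d(c_\xi(t),c_\eta(t))$ (standard, see \cite{ballman95,bridson_haefliger99}); then prove the three assertions relating $\angle$, $d_T$ and geodesics in $X$; then the $\cat(1)$ comparison; then lower semicontinuity; and finally deduce completeness from the latter. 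Since $d_T$ is by definition the length metric of $\angle$, the bound $d_T\geq\angle$, the fact that $(\partial_T X,d_T)$ is a length space where finite, and the bound $\angle\leq\pi$ are all automatic.

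For the trichotomy I would first note that a geodesic line $\sigma$ joining $\xi$ to $\eta$ has angle $\pi$ at any of its points between the two ends, so $\angle(\xi,\eta)=\pi$ and $d_T(\xi,\eta)\geq\pi$; this gives the inequality in the third assertion and shows $\angle(\xi,\eta)<\pi$ precludes such a geodesic. The core step is a midpoint construction: with $\xi\neq\eta$, $\angle(\xi,\eta)=\alpha$, basepoint $x$ and rays $c_\xi,c_\eta$, let $m_t$ be the midpoint of $[c_\xi(t),c_\eta(t)]$ in $X$. If $d(x,m_t)$ stays bounded, a subsequential limit of the rays $[x,m_t]$ splices with the segments into a geodesic line from $\xi$ to $\eta$; if $d(x,m_t)\to\infty$, properness yields a subsequential limit ray $[x,\zeta]$, and a comparison estimate with the Euclidean law of cosines gives $\angle(\xi,\zeta)\leq\alpha/2$ and $\angle(\zeta,\eta)\leq\alpha/2$, which the triangle inequality forces to be equalities. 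Hence, when no geodesic joins $\xi$ to $\eta$, the second case holds, $\zeta$ is a Tits midpoint, and iterating over dyadic parameters and completing produces a Tits geodesic of $\angle$-length $\alpha$; thus $d_T(\xi,\eta)=\angle(\xi,\eta)$, which is the first assertion, and the existence part of the second. When $\angle(\xi,\eta)<\pi$ the first case cannot occur, so the Tits geodesic exists, uniqueness to follow from $\cat(1)$. For the equality clause of the third assertion I would pin a point $c(s)$ on the length-$\pi$ Tits geodesic from $\xi$ to $\eta$, realize it by a ray, and argue by a flat-sector argument (iterating the $\cat(0)$ inequality, in the spirit of the Flat Strip Theorem) that these rays together with $\sigma$ fill a Euclidean half-plane; conversely a flat half-plane bounded by $\sigma$ has boundary at infinity a Tits arc of length $\pi$ from $\xi$ to $\eta$, giving $d_T(\xi,\eta)\leq\pi$ and hence equality.

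For $\cat(1)$ it suffices, by the definition via triangles of perimeter $<2\pi$, to verify the comparison inequality for such a triangle $(\xi_1,\xi_2,\xi_3)$; its sides then have length $<\pi$ so the Tits geodesics exist and $d_T=\angle$ on all relevant pairs, including those formed by a vertex and a point $p$ on the opposite side (indeed $2\,d_T(p,\xi_k)$ is at most the perimeter, which is $<2\pi$). Fixing a basepoint $x$ and rays $c_i$ to $\xi_i$, I would form the geodesic triangle $\Delta_t=\Delta(c_1(t),c_2(t),c_3(t))$ in $X$; by $\cat(0)$ it is thinner than its Euclidean comparison triangle, and after dividing distances by $t$ and letting $t\to\infty$ the rescaled triangle converges to the chord triangle of the spherical comparison triangle of $(\xi_1,\xi_2,\xi_3)$, so the $\cat(0)$ estimate passes in the limit to the chordal and hence spherical $\cat(1)$ estimate; this also yields uniqueness of Tits geodesics between close points, finishing the second assertion. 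Completeness would then follow from assertion~\ref{semi continuite}: a $d_T$-Cauchy sequence $(\xi_n)$ eventually has pairwise distances $<\pi$, hence pairwise $\angle$ tending to $0$, hence for each fixed $R$ the projections $c_{\xi_n}(R)$ are Cauchy in the compact ball $\overline{B}(x,R)$, so $(\xi_n)$ converges visually to some $\xi$, and then $d_T(\xi,\xi_n)\leq\liminf_m d_T(\xi_m,\xi_n)\to 0$.

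For assertion~\ref{semi continuite} (lower semicontinuity) I would use that each $\angle_x(\cdot,\cdot)$ is lower semicontinuous for the visual topology: for fixed $s,t$ the comparison angle $\overline{\angle}_x(c_\xi(s),c_\eta(t))$ depends continuously on $(\xi,\eta)$ because $\xi\mapsto c_\xi(s)$ is continuous (Proposition~\ref{continuite proj}), and $\angle_x=\sup_{s,t}$ of these, hence $\angle=\sup_x\angle_x$, is lower semicontinuous. Given $\xi_n\to\xi$, $\eta_n\to\eta$ visually and $L:=\liminf_n d_T(\xi_n,\eta_n)<\infty$, I would pass to a subsequence realizing $L$, choose paths $\gamma_n\colon[0,1]\to\partial_T X$ from $\xi_n$ to $\eta_n$ of $\angle$-length $\leq d_T(\xi_n,\eta_n)+1/n$ parametrized proportionally to arclength (so uniformly Lipschitz into $(\bd X,\angle)$), observe that for each $R$ the maps $s\mapsto c_{\gamma_n(s)}(R)$ are uniformly Lipschitz into $\overline{B}(x,R)$, and extract by Arzelà--Ascoli and a diagonal argument a subsequential limit $\gamma\colon[0,1]\to\bd X$ from $\xi$ to $\eta$; lower semicontinuity of $\angle$ then bounds the $\angle$-length of $\gamma$ by $L$ along any partition, so $d_T(\xi,\eta)\leq L$. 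The step I expect to be the real obstacle is the passage from the interior to the boundary: the midpoint-limit construction together with the comparison estimate for the angles $\angle(\xi,\zeta),\angle(\zeta,\eta)$ (which is where properness is essential), and the rigidity argument in the equality case of the third assertion turning a length-$\pi$ Tits geodesic into an honest flat half-plane; the limiting characterization of the Tits angle also underlies both the midpoint estimate and the $\cat(1)$ approximation and must be established carefully first.
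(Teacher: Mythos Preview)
The paper does not prove Theorem~\ref{tits}; it is stated with the citation \cite[Theorem~II.4.11]{ballman95} and used as a black box, so there is no in-paper argument to compare your proposal against. Your sketch follows the classical route one finds in the cited reference (and in \cite[Chapter~II.9]{bridson_haefliger99}): the midpoint-at-infinity construction for the trichotomy, rescaled comparison triangles for the $\cat(1)$ inequality, and lower semicontinuity of $\angle$ via continuity of the ray map together with an Arzel\`a--Ascoli argument on paths. That outline is essentially the standard one and is broadly sound as a strategy.

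A couple of places in your sketch would need more care if you were writing this out in full. In the midpoint construction, the bounded case (``$d(x,m_t)$ stays bounded'') does not immediately splice into a bi-infinite geodesic from the rays $[x,m_t]$; one rather extracts limits of the segments $[c_\xi(t),c_\eta(t)]$ themselves (which pass within bounded distance of $x$) using properness and Arzel\`a--Ascoli. In the equality case of item~3, the passage from a Tits arc of length $\pi$ to an honest isometrically embedded half-plane is the delicate rigidity step, and ``flat-sector argument'' hides real work (one typically first produces flat sectors from each ray $c(s)$ and then glues them along $\sigma$, invoking the Flat Strip Theorem or a direct sandwich argument). These are exactly the points you flag as obstacles, and they are; but since the paper simply imports the theorem, none of this is needed for the present article.
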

	
	\begin{rem}
		In fact, for $0 \leq r< \infty$, $B_T(\xi, r)$ is closed for the visual topology. Indeed, let $(\xi_n) \subseteq B_T(\xi, r)$ such that $\xi_n \rightarrow z \in \bd X$ in the visual topology. By lower semicontinuity, $\liminf d_T(\xi, \xi_n) \geq d_T(z, \xi)$, hence $d_T(z, \xi) \leq r$. In particular, the ball $B_T(\xi, r)$ is $\nu$-measurable.
	\end{rem}
	
	Let now $g \in G$ be a rank one element and let $\sigma$ be an axis of $g$. Then $\sigma(+\infty) $ and $\sigma(- \infty)$ are visibility points of the boundary. In particular, $d_T (\sigmam, \xi) = + \infty$ for all $\xi \in \bd X - \{\sigma(+ \infty)\}$, see \cite[Lemma 1.7]{ballman_buyalo08}. In fact, the converse of can be made true once we add some conditions on the group action. The next propositions show that, given some conditions on the group action, there are ways to detect rank one elements in $G$ provided we have isolated points on the Tits boundary. 
	
	\begin{thm}[{\cite[Main Theorem]{ruane01}}]
		Let $G$ be group acting properly discontinuously, cocompactly by isometries on a $\cat(0) $ space $X$, and let $a, b$ be infinite order elements such that $d_T(\{a^{\pm \infty}\},\{b^{\pm \infty}\}) > \pi$, then the subgroup generated by $a$ and $b $ contains a free subgroup. In fact, there exists $N\geq 0 $ such that for all $n \geq N$, $a^n b^{-n }$ is a rank one isometry. 
	\end{thm}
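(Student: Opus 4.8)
The plan is to establish, for all $n$ beyond some $N$, the single statement that $g_n:=a^{n}b^{-n}$ is \emph{axial} with axis endpoints $\zeta_n^{\pm}\in\bd X$ satisfying $d_T(\zeta_n^{+},\zeta_n^{-})>\pi$: by part (3) of Theorem \ref{tits} a geodesic line realising the pair $\zeta_n^{+},\zeta_n^{-}$ then cannot bound a flat half-plane, so $g_n$ is rank one, while the existence of a free subgroup of $\langle a,b\rangle$ will come out of the ping-pong dynamics used along the way. As preliminaries: the action being properly discontinuous and cocompact, every element of $G$ is semisimple, and a semisimple element of infinite order is axial (it cannot be elliptic, since point stabilisers are finite). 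So $a,b$ are axial; write $\alpha_a,\alpha_b$ for axes and $a^{\pm}=\alpha_a(\pm\infty)$, $b^{\pm}=\alpha_b(\pm\infty)$ for the endpoints, so that $a^{n}x\to a^{+}$, $a^{-n}x\to a^{-}$ and similarly for $b$, for every $x\in X$. The hypothesis says $d_T(a^{\varepsilon},b^{\delta})>\pi$ for all $\varepsilon,\delta\in\{+,-\}$; by parts (1) and (3) of Theorem \ref{tits} this forces, for each such pair, a geodesic line of $X$ joining $a^{\varepsilon}$ to $b^{\delta}$ and not bounding a flat half-plane (a rank one geodesic), and in particular the four points $a^{\pm},b^{\pm}$ are distinct.

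The dynamical input is the $\pi$-convergence property of \cite[Lemma 19]{papasoglu_swenson09}. Applied to the sequence $(a^{n})_n$ (for which $a^{n}x\to a^{+}$ and $a^{-n}x\to a^{-}$), it gives: for every open $U\ni a^{+}$ and every compact $K\subseteq\bd X\setminus B_T(a^{-},\pi)$, one has $a^{n}K\subseteq U$ for all $n$ large; and symmetrically for $a^{-n}$ relative to $a^{-}$ and for $b^{\pm n}$ relative to $b^{\pm}$. Since the balls $B_T(\cdot,\pi)$ are closed for the visual topology and all four distances $d_T(a^{\varepsilon},b^{\delta})$ exceed $\pi$, we may fix pairwise disjoint compact visual neighbourhoods $V_a^{\pm}\ni a^{\pm}$, $V_b^{\pm}\ni b^{\pm}$ in $\overline{X}$, small enough that $a^{\pm n}(V_b^{+}\cup V_b^{-})\subseteq V_a^{\pm}$ and $b^{\pm n}(V_a^{+}\cup V_a^{-})\subseteq V_b^{\pm}$ for all $n\ge N$ (for the $\bd X$-parts this is exactly $\pi$-convergence; the $X$-parts need a routine strengthening, discussed below). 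The classical ping-pong lemma, played with $V_a^{+}\cup V_a^{-}$ and $V_b^{+}\cup V_b^{-}$ (which do not exhaust $\bd X$), then shows that $\langle a^{n},b^{n}\rangle$ is free of rank two for $n\ge N$; this gives the free-subgroup assertion. Moreover $g_n=a^{n}b^{-n}$ is a non-trivial element of this free group, so it has infinite order, hence is axial; and $g_n(V_a^{+})=a^{n}(b^{-n}V_a^{+})\subseteq a^{n}(V_b^{-})\subseteq V_a^{+}$, while $g_n^{-1}(V_b^{+})=b^{n}(a^{-n}V_b^{+})\subseteq b^{n}(V_a^{-})\subseteq V_b^{+}$.

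Now let $\beta_n$ be an axis of $g_n$ and $\zeta_n^{\pm}=\beta_n(\pm\infty)$; since $\beta_n$ stays at bounded distance from the orbit $(g_n^{k}o)_{k\in\mathbb{Z}}$ of a basepoint $o$, we have $\zeta_n^{+}=\lim_{k}g_n^{k}o$ and $\zeta_n^{-}=\lim_{k}g_n^{-k}o$. Tracking this orbit through the ping-pong sets (note $b^{-n}o\to b^{-}$, so $g_no=a^{n}(b^{-n}o)\in V_a^{+}$ for $n$ large, and then $g_n(V_a^{+})\subseteq V_a^{+}$ keeps it there; symmetrically for $g_n^{-1}$) yields $\zeta_n^{+}\in V_a^{+}$ and $\zeta_n^{-}\in V_b^{+}$. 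Shrinking the neighbourhoods as $n\to\infty$ gives $\zeta_n^{+}\to a^{+}$ and $\zeta_n^{-}\to b^{+}$ in the visual topology; then lower semicontinuity of $d_T$ (part (4) of Theorem \ref{tits}) together with $d_T(a^{+},b^{+})>\pi$ give $d_T(\zeta_n^{+},\zeta_n^{-})>\pi$ for $n$ large. By part (3) of Theorem \ref{tits}, the axis $\beta_n$ does not bound a flat half-plane, i.e. $g_n=a^{n}b^{-n}$ is a rank one isometry for all $n\ge N$, as claimed.

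The point I expect to be the real obstacle is the conversion of the Tits-distance hypothesis into genuine, \emph{uniform-in-$n$} north--south behaviour for the powers $a^{\pm n},b^{\pm n}$, together with the attendant localisation of the fixed points $\zeta_n^{\pm}$ near $a^{+}$ and $b^{+}$: the ping-pong neighbourhoods are most naturally taken in $\bd X$, whereas the orbit $(g_n^{k}o)$ lives in $X$, so one must either extend $\pi$-convergence to $\overline{X}$ or else control $d(g_n^{k}o,\beta_n)$ directly and uniformly in $n$. This is precisely where the contracting / rank one geometry does the work, via a quantitative form of Ballmann's contracting geodesic lemma \cite[Lemma III.3.1]{ballman95}; the remainder of the argument is comparatively soft.
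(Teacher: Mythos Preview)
This theorem is quoted in the paper as \cite[Main Theorem]{ruane01}; it is background material in Section~\ref{tits metric subsection} and the paper gives no proof of it. There is therefore no ``paper's own proof'' to compare your attempt against.

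On the substance of your sketch: the strategy (semisimplicity from cocompactness, $\pi$-convergence to produce ping-pong neighbourhoods, ping-pong for the free subgroup, localisation of the axis endpoints of $g_n$ near $a^{+}$ and $b^{+}$, then lower semicontinuity of $d_T$ to force $d_T(\zeta_n^{+},\zeta_n^{-})>\pi$) is the natural one and is essentially how Ruane proceeds. You have correctly identified the one genuinely non-formal step: $\pi$-convergence as stated in \cite{papasoglu_swenson09} is a statement about $\bd X$, while you need the orbit $(g_n^{k}o)_k\subset X$ to enter and remain in the chosen neighbourhoods in $\overline{X}$. In Ruane's argument this is handled not via Ballmann's Lemma~III.3.1 but more directly, using that the geodesics joining $a^{\varepsilon}$ to $b^{\delta}$ are contracting (since $d_T>\pi$) together with the cocompactness of the action to obtain uniform projection estimates; once one has genuine compact ping-pong sets in $\overline{X}$ the rest of your outline goes through. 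Your final paragraph accurately flags this as the crux, so the proposal is honest about where the work lies.
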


	Given an action by isometries of a group $G$ on a $\cat(0)$ space $X$, the limit set $\Lambda$ of the action $G \curvearrowright X$ is the set of all points $ \xi \in \bd X$ such that there exists a sequence $(g_n) \in G $, and $x \in X$ for which $g_n x \rightarrow \xi $. 
	
	\begin{prop}[{\cite[Proposition 1]{ballman_buyalo08}}]
		Suppose that $\Lambda = \partial X$, and that for each $\xi \in \partial_T X$, there exists $\eta \in \partial_T X$ with $d_T(\xi, \eta) > \pi$. Then $G$ contains a rank one isometry. 
	\end{prop}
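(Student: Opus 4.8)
The plan is to manufacture a rank one \emph{isometry} by a North--South/ping-pong argument.

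\textbf{Step 1: a rank one geodesic with rigid endpoints.} Pick any $\xi\in\partial_T X$; by hypothesis there is $\eta$ with $d_T(\xi,\eta)>\pi$, so by Theorem~\ref{tits}(1) there is a geodesic from $\xi$ to $\eta$, and by Theorem~\ref{tits}(3) it does not bound a flat half-plane, i.e.\ it is rank one. Fix such a geodesic $c$ and put $\xi^\pm=c(\pm\infty)$, $o=c(0)$. By \cite[Lemma~1.7]{ballman_buyalo08} (applied to the appropriate orientation of $c$), $d_T(\xi^+,z)=+\infty$ for every $z\ne\xi^-$; together with $d_T(\xi^+,\xi^-)>\pi$ this gives $d_T(\xi^+,z)>\pi$ for \emph{every} $z\ne\xi^+$, so every boundary point $\ne\xi^+$ is joined to $\xi^+$ by a rank one geodesic. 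Feeding these geodesics back into \cite[Lemma~1.7]{ballman_buyalo08} yields the same property at every point: $d_T(p,q)>\pi$ whenever $p\ne q$ in $\partial X$, equivalently $B_T(p,\pi)=\{p\}$ for all $p$, and any two distinct boundary points are joined by a rank one geodesic.

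\textbf{Step 2: a convergence sequence with distinct forward and backward limits.} Since $\Lambda=\partial X$ and $\xi^+\ne\xi^-$, I claim $G$ contains a sequence $(\gamma_n)$ with $\gamma_no\to p$ and $\gamma_n^{-1}o\to q$ for some $p\ne q$ in $\partial X$. Indeed, if every sequence $(g_n)$ in $G$ with $g_no\to\zeta\in\partial X$ had $g_n^{-1}o\to\zeta$ along every convergent subsequence (note $d(o,g_n^{-1}o)=d(o,g_no)\to\infty$, so the subsequential limits lie in $\partial X$), then, taking witnessing sequences $(a_n)$ for $\xi^+$ and $(b_n)$ for $\xi^-$ so that $a_no,a_n^{-1}o\to\xi^+$ and $b_no,b_n^{-1}o\to\xi^-$ along a common subsequence, the $\pi$-convergence property (Papasoglu--Swenson \cite{papasoglu_swenson09}, recalled in the Introduction) applied to $(a_n)$ with $B_T(\xi^+,\pi)=\{\xi^+\}$ gives $(a_nb_n^{-1})o=a_n(b_n^{-1}o)\to\xi^+$, hence $(a_nb_n^{-1})^{-1}o=b_n(a_n^{-1}o)\to\xi^+$ by the supposition; but $\pi$-convergence applied to $(b_n)$ with $B_T(\xi^-,\pi)=\{\xi^-\}$ gives $b_n(a_n^{-1}o)\to\xi^-$, forcing $\xi^+=\xi^-$, a contradiction. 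By Step 1 there is then a rank one geodesic $c'$ joining $q$ to $p$.

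\textbf{Steps 3--4: promoting to a rank one isometry.} Fix disjoint open $U\ni p$ and $V\ni q$ in $\overline{X}$. Since $B_T(p,\pi)=\{p\}$ and $B_T(q,\pi)=\{q\}$, applying $\pi$-convergence to $(\gamma_n)$ and to $(\gamma_n^{-1})$ yields $n_0$ with $\gamma_n(\partial X\setminus V)\subseteq U$ and $\gamma_n^{-1}(\partial X\setminus U)\subseteq V$ for $n\ge n_0$. For $n$ large, $g:=\gamma_n$ is then an axial isometry whose axis lies within bounded Hausdorff distance of $c'$: this is the standard argument (Ballmann \cite[Lemma~III.3.1]{ballman95}; cf.\ the discussion of contracting geodesics in \cite{bestvina_fujiwara09}) that a sequence driving the basepoint towards the two endpoints of a rank one --- hence contracting --- geodesic is eventually realized by axial isometries translating along that geodesic, using that the segments $[\gamma_n^{-1}o',\gamma_no']$ (for a basepoint $o'\in c'$, the limits being basepoint-independent) fellow-travel $c'$ on ever longer stretches and that $X$ is proper. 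Since $c'$ does not bound a flat half-plane, and this is inherited by any geodesic uniformly close to a contracting geodesic, the axis of $g$ is rank one; hence $g$ is a rank one element of $G$ (equivalently, $g$ is axial and, by the inclusions above, acts with North--South dynamics, so Theorem~\ref{hamenstadt} applies).

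I expect Steps 3--4 to be the main obstacle: the hypotheses only deliver North--South behaviour of a \emph{sequence} on the visual boundary, and turning this into a single \emph{axial} isometry with a genuinely \emph{rank one} axis --- controlling the minimal set, the translation length, and the no-flat-half-plane condition --- is where the $\cat(0)$ geometry really enters. Step 1's bootstrapping of \cite[Lemma~1.7]{ballman_buyalo08} to all boundary points and Step 2's extraction are comparatively routine but essential.
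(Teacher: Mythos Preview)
The paper does not prove this proposition at all; it is quoted verbatim from \cite{ballman_buyalo08} as background, with no argument supplied. So there is no ``paper's proof'' to compare against, and your proposal should be read as an independent attempt at Ballmann--Buyalo's result.

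On its own merits, the argument has a real gap in Step~1. You invoke \cite[Lemma~1.7]{ballman_buyalo08} for an \emph{arbitrary} rank one geodesic $c$ to conclude that $d_T(\xi^+,z)=+\infty$ for every $z\neq\xi^+$ (note also the slip ``$z\neq\xi^-$''), and then bootstrap to $B_T(p,\pi)=\{p\}$ for \emph{all} $p\in\partial X$. But the isolation statement you are using --- as the present paper itself records just before the proposition --- is for the endpoints of an \emph{axis of a rank one isometry}, where the $\langle g\rangle$-invariance is doing work. For a non-periodic rank one geodesic the endpoint is a visibility point, but it need not be Tits-isolated; think of a Hadamard surface containing a flat cylinder: boundary points coming from the flat part sit on a Tits circle, yet each of them is joined by a rank one geodesic to points in the hyperbolic part. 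Your bootstrap would force those circle points to be isolated, which is false. The hypothesis only says that \emph{some} $\eta$ has $d_T(\xi,\eta)>\pi$, and this cannot be upgraded to ``all $\eta\neq\xi$'' by the route you propose.

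Once Step~1 fails, Steps~2--4 lose their footing: your $\pi$-convergence applications need $B_T(\cdot,\pi)$ to be a singleton, and without that the conclusions $a_n(b_n^{-1}o)\to\xi^+$ etc.\ are unjustified (note also that Theorem~\ref{PS} as stated acts on compacta in $\partial X$, not on orbit points in $X$). You are right that the genuine difficulty is producing an \emph{axial} isometry with a rank one axis from mere convergence dynamics; that is precisely the content of Ballmann--Buyalo's argument, and your sketch in Steps~3--4 does not supply it. If you want to reconstruct their proof, the honest route is to work directly with Lemma~\ref{ballm} and the contracting property to build a periodic rank one geodesic, rather than first trying to discretise the entire Tits boundary.
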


	\subsection{Stationary measures and boundary theory} \label{stationary section}
	
	In the study of random walks on a group $G$ acting on a metric space $X$, it is often very useful to use stationary measures on $X$. In the following, for $Y$ a measurable space, we denote $\prob(Y) $ the set of probability measures on $Y$. When a Polish group $G$ acts continuously on a topological probability space $Y$, with $\mu \in \prob(G)$ and $\nu \in \prob(Y)$, we define the convolution probability measure $\mu \ast \nu$ as the image of $\mu \times \nu $ under the action $G \times Y \rightarrow  Y $. In other words, for $f $ a bounded measurable function on $Y$, 
	
	\begin{equation*}
		\int_Y f(y)d(\mu \ast \nu)(y) = \int_G\int_Y f(g \cdot y ) d\mu(g) d\nu (y).
	\end{equation*}
	
	In our situation, $G$ is countable, so for $A$ any measurable set in $Y$, 
	
	\begin{equation*}
		\mu \ast \nu (A) = \sum_{g \in G} \mu(g) \nu (g^{-1}A). 
	\end{equation*}
	
	We will write $\mu_m = \mu^{\ast m}$ the $m$-th convolution power of $\mu$, where $G$ acts on itself by left translation $(g,h) \mapsto  gh$. 
	\begin{Def}
		A probability measure $\nu \in \prob(X) $ is $\mu$-stationary if $\mu \ast \nu = \nu $.	
	\end{Def}
	
	The Banach-Alaoglu Theorem implies that the set of measures on $\overline{X}$ is a weakly-$\ast$ compact space. The next result is a straightforward consequence of this fact. 
	
	\begin{thm}\label{existence}
		Let $G$ be a countable group acting by homeomorphisms on a compact metric space $Y$, and let $\mu \in \prob(G) $ a probability measure on $G$. Then there exists a $\mu$-stationary Borel probability measure $\nu \in \prob (Y)$ on $Y$. 
	\end{thm}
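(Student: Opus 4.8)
The plan is to obtain $\nu$ as a fixed point of the convolution operator
\[
T : \prob(Y) \longrightarrow \prob(Y), \qquad T\nu := \mu \ast \nu,
\]
by a Ces\`aro-averaging argument. First, since $Y$ is a compact metric space, the Riesz representation theorem identifies $\prob(Y)$ with a convex subset of the closed unit ball of $C(Y)^{\ast}$ that is closed for the weak-$\ast$ topology; by the Banach--Alaoglu theorem this subset is weak-$\ast$ compact, and it is metrizable because $C(Y)$ is separable. The operator $T$ is clearly affine, and I would check that it is weak-$\ast$ continuous: for $f \in C(Y)$ one has
\[
\int_Y f \, d(\mu \ast \nu) = \sum_{g \in G} \mu(g) \int_Y f(g \cdot y) \, d\nu(y) = \int_Y \Big( \sum_{g \in G} \mu(g) f(g \cdot y) \Big) d\nu(y),
\]
and the function $y \mapsto \sum_{g \in G} \mu(g) f(g \cdot y)$ is a uniform limit of its continuous partial sums (the tail over $g \notin F$ is bounded by $\|f\|_\infty \sum_{g \notin F} \mu(g)$, which is arbitrarily small once $F$ is large, since $\mu$ is a probability measure), hence continuous; therefore $\nu \mapsto \int_Y f \, d(T\nu)$ is weak-$\ast$ continuous for every $f \in C(Y)$.

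Second, I would fix an arbitrary $\nu_0 \in \prob(Y)$ (for instance a Dirac mass) and set
\[
\nu_n := \frac{1}{n} \sum_{k=0}^{n-1} T^k \nu_0 = \frac{1}{n} \sum_{k=0}^{n-1} \mu^{\ast k} \ast \nu_0 \in \prob(Y).
\]
By weak-$\ast$ compactness there is a subsequence $(\nu_{n_j})_j$ converging weak-$\ast$ to some $\nu \in \prob(Y)$. A telescoping computation gives
\[
T\nu_n - \nu_n = \frac{1}{n}\big( T^n \nu_0 - \nu_0 \big),
\]
a signed measure of total variation at most $2/n$, so $T\nu_n - \nu_n \to 0$ in the weak-$\ast$ topology. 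Letting $j \to \infty$ along the subsequence and using the weak-$\ast$ continuity of $T$ established above yields $T\nu = \nu$, i.e. $\mu \ast \nu = \nu$, which is the desired $\mu$-stationary measure.

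There is no genuine obstacle here, the statement being a soft compactness/fixed-point result; the only point requiring a little care is the weak-$\ast$ continuity of $T$, where the countability of $G$ together with $\mu(G) = 1$ is used to make the infinite sum $\sum_{g} \mu(g) f(g\cdot y)$ converge uniformly in $y$ and thus define an element of $C(Y)$. Alternatively, one could invoke the Markov--Kakutani fixed point theorem directly for the single affine, weak-$\ast$ continuous self-map $T$ of the compact convex set $\prob(Y)$, which bypasses the explicit averaging entirely.
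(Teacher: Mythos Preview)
Your proof is correct. The paper does not actually give a proof of this statement: it simply asserts that the existence of a $\mu$-stationary measure is a ``straightforward consequence'' of the Banach--Alaoglu theorem, and moves on. Your argument is exactly the standard way to unpack that assertion --- the Krylov--Bogolyubov/Ces\`aro averaging scheme (or equivalently Markov--Kakutani, as you note) applied to the affine, weak-$\ast$ continuous convolution operator on the compact convex set $\prob(Y)$ --- so you are filling in precisely the details the paper elides rather than taking a different route.
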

	
	\begin{rem}
		Since $X$ is a proper $\cat(0)$ space, $\overline{X}$ is a compact metrizable space and Theorem \ref{existence} states that there exists a probability measure $\nu$ on $\overline{X}$ that is $\mu$-stationary. 
	\end{rem}
	
	One of the reasons why we use stationary measures is given by the following Theorem, which is an important consequence of the martingale convergence Theorem and which goes back to Furstenberg \cite{furstenberg73}. 
	
	\begin{thm}[{\cite[Lemma 1.33]{furstenberg73}}]\label{furstenberg73}
		Let $G$ be a discrete group, $\mu \in \prob(G)$ and $(n, \omega) \in \mathbb{N} \times \Omega \mapsto Z_n(\omega)$ be the random walk on $G$  associated to the measure $\mu$. Let $Y$ be a locally compact, $\sigma$-compact metric space on which $G$ acts by isometries, and let $\nu$ be a $\mu$-stationary measure on $Y$. Then, for $\mathbb{P}$-almost every $\omega \in \Omega$, there exists $\nu_\omega \in \prob(Y)$ such that $Z_n (\omega)\nu \rightarrow \nu_\omega $ in the weak-$\ast$ topology. Moreover, for all $g \in G$, $Z_n (\omega)g\nu \rightarrow \nu_\omega $ in the weak-$\ast$ topology, and $\nu = \int_{\Omega} \nu_\omega d \mathbb{P}(\omega)$. 
	\end{thm}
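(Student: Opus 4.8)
The plan is, for a fixed continuous test function, to recognize the pairing of $Z_n(\omega)\nu$ against it as a bounded martingale, then to pass from convergence for each fixed function to weak-$\ast$ convergence of the measures via separability, and finally to read off the barycenter formula and the equivariance from the same martingale. \textbf{Step 1 (the martingale).} Fix $f\in C(\overline X)$ and set $M_n(\omega):=\int_{\overline X}f(Z_n(\omega)y)\,d\nu(y)=\int_{\overline X}f\,d(Z_n(\omega)\nu)$. With respect to the filtration $\mathcal F_n:=\sigma(\omega_1,\dots,\omega_n)$, using that $\omega_{n+1}$ has law $\mu$ and is independent of $\mathcal F_n$ and that $Z_{n+1}(\omega)=Z_n(\omega)\omega_{n+1}$, one computes
\begin{equation*}
\mathbb E[M_{n+1}\mid\mathcal F_n](\omega)=\int_G\int_{\overline X}f(Z_n(\omega)hy)\,d\nu(y)\,d\mu(h)=\int_{\overline X}f\,d\big(Z_n(\omega)(\mu\ast\nu)\big)=M_n(\omega),
\end{equation*}
the last equality being exactly the $\mu$-stationarity $\mu\ast\nu=\nu$. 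Since $|M_n|\le\|f\|_\infty$, this bounded martingale converges $\mathbb P$-almost surely (and in $L^1$) to a limit $L_f(\omega)$, and $M_n=\mathbb E[L_f\mid\mathcal F_n]$.

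\textbf{Step 2 (weak-$\ast$ convergence).} Since $X$ is proper, $\overline X$ is compact and metrizable, so $C(\overline X)$ is separable; fix a countable dense subset $\{f_k\}$. Intersecting the associated full-measure sets produces a single full-measure event $\Omega_0$ on which $M_n(f_k)(\omega)$ converges for every $k$, hence --- by density and the uniform bound $\|M_n(f)\|_\infty\le\|f\|_\infty$ --- for every $f\in C(\overline X)$. For $\omega\in\Omega_0$ the functional $f\mapsto L_f(\omega)$ is linear, positive and normalized ($L_f(\omega)=1$ when $f\equiv 1$), so by the Riesz representation theorem it is integration against a probability measure $\nu_\omega\in\prob(\overline X)$; equivalently, $Z_n(\omega)\nu\to\nu_\omega$ in the weak-$\ast$ topology. (In the general locally compact $\sigma$-compact setting the same argument carried out on $C_0(Y)$ yields a priori only a subprobability measure, and the absence of escaping mass then follows from the barycenter identity below, since $\int_\Omega\nu_\omega(Y)\,d\mathbb P=\nu(Y)=1$ forces $\nu_\omega(Y)=1$ almost surely.)

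\textbf{Step 3 (barycenter formula and equivariance).} Taking expectations in Step 1 and using Fubini, $\mathbb E[M_n(f)]=\int_{\overline X}f\,d(\mu^{\ast n}\ast\nu)=\int_{\overline X}f\,d\nu$ for every $n$, because $\mu^{\ast n}\ast\nu=\nu$; letting $n\to\infty$ with dominated convergence gives $\int f\,d\nu=\mathbb E\big[\int f\,d\nu_\omega\big]$ for all $f$, that is, $\nu=\int_\Omega\nu_\omega\,d\mathbb P(\omega)$. For the equivariance, the key point is a cocycle relation: passing to the limit in $Z_{n+1}(\omega)=\omega_1\,Z_n(T\omega)$, where $T$ is the shift (which preserves $\mathbb P$), and using that $\omega_1$ acts on $\overline X$ by a homeomorphism, one gets $\nu_\omega=\omega_1\cdot\nu_{T\omega}$, and hence $\nu_\omega=Z_n(\omega)\cdot\nu_{T^n\omega}$ for every $n$. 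Combining this with the identification $Z_n(\omega)\nu=\mathbb E[\nu_\bullet\mid\mathcal F_n](\omega)$ from Step 1, with the absolute continuity of prepending a $\supp\mu$-element to a path (valid since such an element has positive $\mu$-mass), and with the decomposition of an arbitrary $g\in G$ as a product of elements of $\supp\mu$ (admissibility), one upgrades $Z_n(\omega)\nu\to\nu_\omega$ to $Z_n(\omega)g\nu\to\nu_\omega$ for every $g\in G$.

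\textbf{Main obstacle.} The conceptual heart is Step 1 --- recognizing that $\mu$-stationarity is exactly what makes $(M_n)$ a martingale --- together with the upgrade in Step 2 from ``almost sure convergence for each fixed $f$, on a null set depending on $f$'' to genuine weak-$\ast$ convergence of the measures on a single full-measure event, which is where separability (hence properness of $X$) is used. The equivariance statement in Step 3 is the most delicate book-keeping; the barycenter formula and everything else are routine applications of the martingale convergence theorem and of Riesz representation.
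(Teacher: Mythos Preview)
The paper does not prove this theorem; it is quoted as a classical result of Furstenberg (\cite[Lemma 1.33]{furstenberg73}) and used as a black box throughout Sections \ref{uniqueness measure section}--\ref{drift section}. There is therefore no in-paper argument to compare against.

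Your outline is the standard martingale proof and is correct in its essentials. Step~1 (recognising $\int f\,d(Z_n\nu)$ as a bounded martingale via $\mu\ast\nu=\nu$) and Step~2 (upgrading to weak-$\ast$ convergence via separability of $C(\overline X)$, with the parenthetical remark handling escape of mass in the non-compact case through the barycenter identity) are exactly right and carry the weight of the theorem. The barycenter formula in Step~3 is immediate.

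The one place that is thin is your derivation of $Z_n(\omega)g\nu\to\nu_\omega$ for every $g\in G$. The cocycle relation $\nu_\omega=\omega_1\cdot\nu_{T\omega}$ is correct, but the sentence ``combining this with \dots\ one upgrades'' hides a genuine argument. A clean way to close the gap for $h\in\supp\mu$: from $\nu=\sum_{h}\mu(h)\,h\nu$ one gets, for each continuous $f$, $\int f\,d(Z_n\nu)=\sum_h\mu(h)\int f\,d(Z_nh\nu)$; since the left side converges to $\int f\,d\nu_\omega$ and each summand is bounded by $\|f\|_\infty$, any subsequential weak-$\ast$ limits $\lambda_h$ of $(Z_nh\nu)_n$ must satisfy $\sum_h\mu(h)\lambda_h=\nu_\omega$. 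Running the same martingale argument on the $\mu$-stationary measure $\nu$ but along the filtration shifted by one step (equivalently, conditioning on $\omega_1=h$, an event of positive probability whose conditional law on the tail is again $\mathbb P$) identifies each $\lambda_h$ with $\nu_\omega$. Iterating on $\supp(\mu)$ and using admissibility then gives the statement for arbitrary $g\in G$. This is presumably what you mean by ``delicate book-keeping,'' but as written the reader has to supply it.
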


	Let us give a brief overview of boundary theory. For more details, one can study \cite{kaimanovitch00} and \cite{furman02}. 
	We define by "shift map" the application defined by 
	\begin{equation*}
		S : (\omega_0, \omega_1,\dots) \in \Omega \mapsto (\omega_0 \omega_1, \omega_2, \dots).
	\end{equation*}
	If we define by $ f  :\Omega \rightarrow G$ the application $f(\omega) = \omega_1$, see random walk $(Z_n)$ can be written as 
	\begin{equation*}
		(n, \omega) \in \mathbb{N} \times \Omega \mapsto Z_n(\omega) = f(\omega) f(S\omega) \dots f(S^{n-1} \omega). 
	\end{equation*}
	
	Given a measure $\mu $ on a group and a measurable $G$-action on a metric space $ M $ endowed with a probability $\nu$, we say that $ (M, \nu) $ is a $(G, \mu)$-space if the measure $\nu$ is $\mu$-stationary. In that case, Theorem \ref{furstenberg73} states that there exists a limit measure $\nu_\omega = \lim_{n \rightarrow \infty } Z_n(\omega) \nu$. 
	
	\begin{Def}
		A $(G, \mu)$-space $(M, \nu)$ is a $(G,\mu)$-boundary if for $\mathbb{P}$-almost every $\omega\in \Omega$, $\nu_\omega$ is a Dirac measure. 
	\end{Def}
	The study of $(G, \mu)$-boundaries has strong connections with the existence of harmonic functions and Poisson transforms on a group, but these results will be omitted here. We refer to \cite{furman02} for more informations on these subjects. 
	
	It is straightforward to see that any $G$-equivariant factor $(M', \nu')$ of a $(G, \mu)$-boundary is still a $(G, \mu)$-boundary. In fact, the following Theorem states that there exists a pair $(B, \nu_B)$ which is maximal and universal among $(G, \mu)$-boundaries. 
	
	\begin{thm}[{\cite[Theorem 10.1]{furstenberg73}}]\label{poisson boundary}
		Given a locally compact group $G$ with admissible probability measure $\mu$, there exists a maximal $(G, \mu)$-boundary, called the Poisson-Furstenberg boundary $(B, \nu_B)$ of $(G, \mu)$, which is uniquely characterized by the following property:
		\begin{description}
			\item [Universality]For every measurable $(G, \mu)$-boundary $(M, \nu)$, there is a $G$-equivariant measurable quotient map $p : (B, \nu_B) \rightarrow (M, \nu)$, uniquely defined up to $\nu_B$-null sets. 
		\end{description}
	\end{thm}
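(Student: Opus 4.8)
The plan is to build $(B,\nu_B)$ explicitly as the tail boundary of the random walk and then extract the universal property from Theorem~\ref{furstenberg73}. Work on the path space $(\Omega,\mathbb{P})$ with trajectories $\omega\mapsto (Z_n(\omega))_{n\ge 0}$ and let $T$ denote the time shift, which preserves $\mathbb{P}$ and satisfies $Z_n(T\omega)=\omega_1^{-1}Z_{n+1}(\omega)$. Put $\Omega$ in its tail equivalence relation $\sim$ — identify $\omega,\omega'$ when $Z_{k+m}(\omega)=Z_{k+n}(\omega')$ for some $m,n$ and all large $k$ — and let $B$ be the quotient of $\Omega$ by the measurable hull of this partition; since $(\Omega,\mathbb{P})$ is a standard Lebesgue space, so is $(B,\nu_B)$ with $\nu_B:=\mathrm{bnd}_\ast\mathbb{P}$, where $\mathrm{bnd}:\Omega\to B$ is the projection. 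The left translation action $g\cdot(Z_n)_n=(gZ_n)_n$ (equivalently, multiplying the first increment $\omega_1$ by $g$ on the left) preserves $\sim$, hence induces a measurable $G$-action on $B$ for which $\mathrm{bnd}$ is equivariant; and since $\mathrm{bnd}(\omega)=\omega_1\cdot\mathrm{bnd}(T\omega)$ with $\omega_1$ of law $\mu$ independent of $T\omega$, while $T$ preserves $\mathbb{P}$, one obtains $\mu\ast\nu_B=\nu_B$. Thus $(B,\nu_B)$ is a $(G,\mu)$-space.

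The next point — and the first place where there is real content — is that $(B,\nu_B)$ is itself a $(G,\mu)$-boundary, i.e.\ that the limit measures $(\nu_B)_\omega=\lim_n Z_n(\omega)\nu_B$ of Theorem~\ref{furstenberg73} are Dirac for $\mathbb{P}$-a.e.\ $\omega$. This is where martingale convergence (already invoked for Theorem~\ref{furstenberg73}) is essential: for $f\in L^\infty(B,\nu_B)$, the bounded martingale $\mathbb{E}[f\circ\mathrm{bnd}\mid Z_1,\dots,Z_n]$ converges $\mathbb{P}$-a.s., and its limit is measurable with respect to the tail $\sigma$-algebra, hence — by the defining property of $B$ — equals $f\circ\mathrm{bnd}$ up to a null set. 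Rewriting the $n$-th term as $\int_B f\,d\bigl(Z_n(\omega)\nu_B\bigr)$ and passing to the limit identifies $\int_B f\,d(\nu_B)_\omega$ with $f(\mathrm{bnd}(\omega))$ for a.e.\ $\omega$, so $(\nu_B)_\omega=\delta_{\mathrm{bnd}(\omega)}$ and $(B,\nu_B)$ is a boundary.

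For universality, fix an arbitrary measurable $(G,\mu)$-boundary $(M,\nu)$. Theorem~\ref{furstenberg73} gives limit measures $\nu_\omega$, which are Dirac by assumption, so $\nu_\omega=\delta_{\beta(\omega)}$ for a measurable $\beta:\Omega\to M$; the equivariance $Z_n(\omega)g\nu\to\nu_\omega$ supplied by the same theorem encodes compatibility of $\beta$ with the $G$-actions ($\beta(g\cdot\omega)=g\,\beta(\omega)$). The decisive observation is that $\nu_\omega=\lim_n Z_n(\omega)\nu$ depends on $\omega$ only through its tail class, so $\beta$ is constant on $\sim$-classes up to a $\mathbb{P}$-null set and therefore factors as $\beta=p\circ\mathrm{bnd}$ for a measurable $G$-equivariant map $p:B\to M$. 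Finally $p_\ast\nu_B=\beta_\ast\mathbb{P}=\int_\Omega\delta_{\beta(\omega)}\,d\mathbb{P}(\omega)=\int_\Omega\nu_\omega\,d\mathbb{P}(\omega)=\nu$, using $\nu=\int\nu_\omega\,d\mathbb{P}$ from Theorem~\ref{furstenberg73}; hence $p$ is a morphism of $(G,\mu)$-spaces, it is onto modulo null sets because $\mathrm{bnd}$ is, and any two such maps agree $\nu_B$-a.e.\ since both compose with $\mathrm{bnd}$ to give $\beta$. This is the asserted universal property, and it also pins down $(B,\nu_B)$ up to isomorphism of $(G,\mu)$-spaces.

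The main obstacle is not the formal bookkeeping — stationarity of $\nu_B$, equivariance of $p$, the identity $p_\ast\nu_B=\nu$, all of which are routine once the objects are in place — but rather the two \emph{maximality} ingredients: the martingale argument of the second paragraph, which is exactly what shows that the tail $\sigma$-algebra already produces a boundary, and the claim that \emph{every} boundary map $\beta$ is tail-measurable; together with the measure-theoretic facts that the quotient by the tail partition is a standard Lebesgue space carrying a measurable $G$-action. In the general locally compact (rather than discrete countable) setting there are additional, standard, Haar-measurability technicalities, but none of these affects the structure of the argument.
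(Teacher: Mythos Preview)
The paper does not prove this theorem: it is quoted from \cite{furstenberg73}, and the paper only adds a one-paragraph description of the construction, namely that $B$ is the space of ergodic components of the shift $S$ on $(\Omega,\mathbb{P})$, with $\nu_B$ the pushforward of $\mathbb{P}$. Your proposal is therefore not being compared to a proof in the paper, but it \emph{is} a correct and reasonably complete sketch of the standard argument, and your construction is essentially the same as the paper's: the quotient by the measurable hull of your tail equivalence coincides, up to null sets, with the space of ergodic components of the shift (this is the usual identification of the tail and invariant $\sigma$-algebras for i.i.d.\ increments). So at the level of the construction there is no real difference.

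One small point worth tightening: in the universality step you invoke Theorem~\ref{furstenberg73} to produce $\nu_\omega=\lim_n Z_n(\omega)\nu$ for an arbitrary measurable $(G,\mu)$-boundary $(M,\nu)$, but as stated in the paper that theorem assumes $M$ is a locally compact, $\sigma$-compact metric space so that weak-$\ast$ convergence makes sense. For a purely measurable boundary one should instead argue directly with the martingale $\mathbb{E}[f\circ\beta\mid Z_1,\dots,Z_n]$ for $f\in L^\infty(M,\nu)$, exactly as you already do in your second paragraph for $(B,\nu_B)$; this bypasses any topological assumption on $M$ and shows that the boundary map $\beta$ is tail-measurable. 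With that adjustment your argument goes through.
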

	
	A construction of the Poisson boundary can be described as follows.	The Poisson boundary of the measure $\mu$ is the space $B$ of ergodic components of the action of $S$ on $(\Omega, \haar \otimes \mu^{\mathbb{N^\ast}})$. It is a measured space equipped by the pushforward $\nu_B$ of the measure $\mathbb{P} $ by the natural projection $\Omega \rightarrow B$. 
	The space $(B, \nu_B)$ is a a $(G, \mu)$-space on which $G$ acts ergodically. There is a lot to say about the action of $G$ on $B$, especially concerning $G$-equivariant boundary maps, and the interested reader may read \cite{bader_furman14} for recent developments in this direction, where it is proven that the action $G \curvearrowright B$ is isometrically ergodic, which is an enhanced version of ergodicity.

	\section{Uniqueness of the stationary measure}\label{uniqueness measure section}
	
	From now on, let $G$ be a countable group and $G \curvearrowright X$ a non-elementary action by isometries on a proper $\cat (0)$ space $X$. Let $\mu \in \prob(G) $ be an admissible probability measure on $G$, and assume that $G $ contains a rank one element. Theorem~\ref{existence} gives the existence of a $\mu$-stationary measure $\nu \in \prob(\overline{X})$ on $\overline{X}$. The goal of this section is to show that $\nu$ is the unique $\mu$-stationary measure on $\overline{X}$. In order to do so, we show that the measures $\nu_\omega $ given by the Theorem \ref{furstenberg73} are in fact Dirac measures $\delta_{z(\omega)}$, and that they do not depend on $\nu $. 
	
	\subsection{Dynamics on the Tits boundary}
	
	Let $x \in X$ be a basepoint. We start by showing that almost surely, a subsequence of $(Z_n(\omega)x)$ goes to infinity. 
	
	\begin{lem}\label{subsequence}
		For all $x \in X$,  $(Z_n(\omega) x)_n$ is $\mathbb{P}$-almost surely unbounded.  
	\end{lem}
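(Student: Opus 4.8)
The plan is to argue by contradiction. Suppose the event $E := \{\omega \in \Omega : (Z_n(\omega)x)_n \text{ is bounded in } X\}$ has positive probability; I will extract from it a $G$-invariant probability measure on $\overline{X}$ and then rule this out using a rank one element.

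Fix a $\mu$-stationary measure $\nu \in \prob(\overline X)$ (Theorem \ref{existence}). By Theorem \ref{furstenberg73}, applied to the compact metrizable $G$-space $\overline X$, for $\mathbb{P}$-almost every $\omega$ there is a weak-$*$ limit $\nu_\omega = \lim_n Z_n(\omega)\nu$, and moreover $\nu_\omega = \lim_n Z_n(\omega)g\nu$ for every $g \in G$ (countability of $G$ allows one full-measure set of $\omega$ to work for all $g$ at once). Choose some $\omega \in E$ for which these conclusions hold. Since $(Z_n(\omega)x)_n$ is bounded and each $Z_n(\omega)$ is an isometry of the proper space $X$, the family $(Z_n(\omega))_n$ is $1$-Lipschitz and pointwise bounded, hence relatively compact in $\iso(X)$ by the Arzel\`a--Ascoli theorem; the same argument applied to $(Z_n(\omega)^{-1})_n$ shows that every subsequential limit is again an isometry. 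Pick a subsequence with $Z_{n_k}(\omega) \to g_\infty \in \iso(X)$. As the $\iso(X)$-action on $\overline X$ is continuous, $Z_{n_k}(\omega)g\nu \to g_\infty g \nu$ in the weak-$*$ topology for every $g \in G$; comparing with the limits above gives $g_\infty g \nu = \nu_\omega = g_\infty \nu$ for all $g$, and since $g_\infty$ acts as a homeomorphism of $\overline X$ this forces $g\nu = \nu$ for every $g \in G$. Thus $\nu$ is $G$-invariant.

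To derive a contradiction, let $h \in G$ be a rank one element, with fixed points $h^+, h^- \in \bd X$. For each $y \in \overline X \setminus \{h^-\}$ one has $h^n y \to h^+$ — by the North--South dynamics of Theorem \ref{hamenstadt} when $y \in \bd X$, and because $h$ is axial when $y \in X$ — whereas $h^n h^- = h^-$. Dominated convergence then gives $h^n \nu \to (1 - b)\delta_{h^+} + b\,\delta_{h^-}$ weakly-$*$, where $b := \nu(\{h^-\})$. But $h^n \nu = \nu$ by $G$-invariance, so $\supp(\nu) \subseteq \{h^+, h^-\}$. Applying this to two independent rank one elements $h_1, h_2 \in G$ — which exist because the action is non-elementary (Proposition \ref{non elem caprace fuj}, alternative \ref{non elem}) — and using that $\{h_1^+, h_1^-\}$ and $\{h_2^+, h_2^-\}$ are four distinct boundary points, we conclude $\supp(\nu) = \emptyset$, which is absurd. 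Hence $\mathbb{P}(E) = 0$, and $(Z_n(\omega)x)_n$ is $\mathbb{P}$-almost surely unbounded.

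The main obstacle is the second paragraph: one must be careful that relative compactness of $(Z_n(\omega))_n$ in $\iso(X)$ — where properness of $X$ is used — yields weak-$*$ convergence of the pushforward measures on the whole of $\overline X$, which rests on the continuity of the $\iso(X)$-action on the compactification, and one must also keep track of the ``for all $g$'' clause of Theorem \ref{furstenberg73}. The remaining arguments are routine.
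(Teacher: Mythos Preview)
Your argument is correct, but it takes a markedly different route from the paper's. The paper gives a short, elementary proof: pick any axial element $g$ with translation length $l>0$; for a compact $K$ of diameter $D$, choose $k$ with $kl>D$, so that $Z_n x\in K$ forces $Z_n g^k x\notin K$. Admissibility gives $\mu_m(g^k)=a>0$ for some $m$, hence $\mathbb{P}(Z_{n+m}x\in K\mid Z_n x\in K)<1-a$, and a Borel--Cantelli/Markov-chain argument shows the walk almost surely exits $K$; exhausting $X$ by compacts finishes. Note this uses neither rank one nor non-elementarity---only the existence of one element of positive translation length.

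Your approach instead assumes $\mathbb{P}(E)>0$, uses Arzel\`a--Ascoli (valid because $X$ is proper and the orbit is bounded) to pass to a limit $g_\infty\in\iso(X)$ along a sample path, and combines this with the ``for all $g$'' clause of Theorem~\ref{furstenberg73} to force the stationary measure $\nu$ to be $G$-invariant; you then kill any $G$-invariant probability on $\overline X$ via North--South dynamics and the existence of two independent rank one elements (Proposition~\ref{non elem caprace fuj}). The steps are sound: continuity of the $\iso(X)$-action on $\overline X$ is standard for proper $\cat(0)$ spaces, and your dominated-convergence computation of $h^n\nu$ is fine once you observe $h^n y\to h^+$ for $y\in X$ as well. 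The trade-off is that your proof consumes the full strength of the standing hypotheses (rank one plus non-elementarity) and the Furstenberg machinery for what is really an elementary recurrence statement, whereas the paper's argument is self-contained and hypothesis-light. On the other hand, your route yields as a byproduct the useful fact that $\overline X$ carries no $G$-invariant probability measure.
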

	
	\begin{rem}\label{subsequence rem}
		Since $\overline{X}$ is compact, a straightforward consequence is that $\mathbb{P}$-almost surely, there exist a subsequence $\phi (n)$ (depending on $\omega$) and $z^{+} (\omega), z^{-}(\omega) \in \bd X$ such that $(Z_{\phi(n)}(\omega) x)_n$ converges to $z^{+} (\omega)$, and $(Z^{-1}_{\phi(n)}(\omega) x)_n$ converges to $z^{-}(\omega)$.
	\end{rem}
	
	\begin{proof}
		Let $K$ be a compact subset of $X$, and let $D$ be its diameter. By hypothesis, there exists $g$ a rank one element in the group, of translation length $\tau(g) := l > 0$. Hence there exists $k \in \mathbb{N}$ such that $\tau(g^k) = k l > D$. In particular, if $Z_n(\omega) x \in K$, then $Z_n(\omega) g^k x \notin K$. Since $\mu $ is admissible, there exists $m \in \mathbb{N}$ such that $\mu_m(g^k) =: a >0$. Then for $n \in \mathbb{N}$, 
		\begin{equation*}
			\mathbb{P}(Z_{n+m}(\omega)x \in K \, | \,  Z_n(\omega)x \in K) < 1-a. 
		\end{equation*}
		Then, $\mathbb{P}$-almost surely, there exists $n_0 $ such that $Z_{n_0}(\omega) x \notin K $. 
		
		Let us take an increasing sequence of compacts $(K_p)_p$ such that $\bigcup K_p = X$. For all $p \in \mathbb{N}$, $\mathbb{P}$-almost surely there exists $n_p \in \mathbb{N}$ such that $Z_{n_p}(\omega) x \notin K_p $. Then the subsequence $(Z_{n_p}(\omega) x)_p$ escapes every compact of $X$. Since $G$ acts by isometries, $(Z^{-1}_{n_p}(\omega) x)_p$ also escapes every compact of $X$.
	\end{proof}
	
	The following Theorem, due to P. Papasoglu and E. Swenson in \cite{papasoglu_swenson09} is a key ingredient in our proof. 
	
	\begin{thm}[{\cite[Lemma 19]{papasoglu_swenson09}}]\label{PS}
		Let $X$ be a proper $\cat (0)$ space, and $G \curvearrowright X$ an action by isometries. Let $x \in X$, $\theta \in [0, \pi]$ and $(g_n) \subseteq G$ be a sequence of isometries for which there exists $x \in X $ such that $g_n(x) \rightarrow \xi \in \bd X $ and $ g^{-1}_n(x) \rightarrow \eta \in \bd X$. Then for all compact subset $K \subseteq \bd X -  B_T(\eta, \theta)$ and for all open subset $U$ such that $B_T(\xi, \pi - \theta)\subseteq U$, there exists $n_0$ such that for all $n \geq n_0$, $g_n(K) \subseteq U$. 
	\end{thm}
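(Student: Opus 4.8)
The plan is to argue by contradiction. Suppose the conclusion fails for some compact $K\subseteq\bd X-B_T(\eta,\theta)$ and some open $U\supseteq B_T(\xi,\pi-\theta)$; then $g_n(K)\not\subseteq U$ for infinitely many $n$, and along that subsequence I can pick $\zeta_n\in K$ with $g_n\zeta_n\notin U$. Since $X$ is proper, $\overline{X}$ is compact, so $K$ and $\bd X-U$ are compact, and after passing to a further subsequence I have $\zeta_n\to\zeta\in K$ and $g_n\zeta_n\to\zeta^*\in\bd X-U$, while $g_n x\to\xi$ and $g_n^{-1}x\to\eta$ still hold. As $B_T(\eta,\theta)$ is closed for the visual topology (the remark after Theorem~\ref{tits}), $\zeta\in K$ gives $d_T(\eta,\zeta)>\theta$, in particular $\zeta\neq\eta$. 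I will reach a contradiction by proving $\zeta^*\in B_T(\xi,\pi-\theta)$, which contradicts $\zeta^*\notin U$.

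The first point is to change basepoint. All four convergences above are independent of the basepoint, so \emph{after} this extraction I may replace $x$ by any $x_0\in X$. By Theorem~\ref{tits}, $\angle(\eta,\zeta)$ equals $d_T(\eta,\zeta)$ when $d_T(\eta,\zeta)<\pi$ and equals $\pi$ otherwise, so $\angle(\eta,\zeta)>\theta$ whenever $\theta<\pi$, and I then choose $x_0$ with $\angle_{x_0}(\eta,\zeta)>\theta$. When $\theta=\pi$, the inequality $d_T(\eta,\zeta)>\pi$ forces the existence of a geodesic line in $X$ from $\eta$ to $\zeta$, and I place $x_0$ on it, so that $\angle_{x_0}(\eta,\zeta)=\pi$. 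In all cases $\angle_{x_0}(\eta,\zeta)\geq\theta$; and since $\zeta\neq\eta$ I may also arrange $\angle_{x_0}(\eta,\zeta)>0$ (this is automatic when $\theta>0$).

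The key estimate is: for every $y\in X$, $\angle_y(\xi,\zeta^*)\leq\pi-\angle_{x_0}(\eta,\zeta)$. Fix $y$. For each $n$, apply to the triangles $(y,g_nx_0,w)$, with $w$ running to infinity along the ray $[g_nx_0,g_n\zeta_n)$, the $\cat(0)$ fact that the Alexandrov angles of a geodesic triangle sum to at most $\pi$. The angle at $g_nx_0$ between $y$ and such a $w$ is exactly $\angle_{g_nx_0}(y,g_n\zeta_n)$, and the angle at $y$ between $g_nx_0$ and $w$ tends to $\angle_y(g_nx_0,g_n\zeta_n)$, so in the limit
\[
\angle_y(g_nx_0,g_n\zeta_n)\leq\pi-\angle_{g_nx_0}(y,g_n\zeta_n)=\pi-\angle_{x_0}(g_n^{-1}y,\zeta_n),
\]
the equality coming from the isometry $g_n^{-1}$. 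Now the Alexandrov angle is lower semicontinuous on $\overline{X}\times\overline{X}$ for the visual topology: it is the supremum, as the parameters $s,t\to 0$, of the Euclidean comparison angles at the vertex in the triangles spanned by $x_0$ and the points at parameters $s,t$ on the two geodesics towards the arguments, and these comparison angles vary continuously with the arguments in $\overline{X}$. Since $g_n^{-1}y\to\eta$ and $\zeta_n\to\zeta$, this gives $\liminf_n\angle_{x_0}(g_n^{-1}y,\zeta_n)\geq\angle_{x_0}(\eta,\zeta)$; applying the same semicontinuity along $(g_nx_0,g_n\zeta_n)\to(\xi,\zeta^*)$ together with the displayed inequality,
\[
\angle_y(\xi,\zeta^*)\leq\liminf_n\angle_y(g_nx_0,g_n\zeta_n)\leq\pi-\angle_{x_0}(\eta,\zeta).
\]
Taking the supremum over $y$, $\angle(\xi,\zeta^*)\leq\pi-\angle_{x_0}(\eta,\zeta)\leq\pi-\theta$, and in particular $\angle(\xi,\zeta^*)<\pi$ because $\angle_{x_0}(\eta,\zeta)>0$.

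Finally, $\angle(\xi,\zeta^*)<\pi$ implies, by Theorem~\ref{tits}, that there is no geodesic in $X$ from $\xi$ to $\zeta^*$, so $d_T(\xi,\zeta^*)=\angle(\xi,\zeta^*)\leq\pi-\theta$; thus $\zeta^*\in B_T(\xi,\pi-\theta)\subseteq U$, contradicting $\zeta^*\notin U$. The step I expect to be the real obstacle is the key estimate of the third paragraph: one has to establish carefully that the Alexandrov angle extends to a lower semicontinuous function on $\overline{X}\times\overline{X}$ and that the ``ideal triangle'' angle inequality is legitimate (both via limiting arguments along rays), and one must keep an eye on the degenerate values $\theta\in\{0,\pi\}$. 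The device that makes the estimate strong enough is the basepoint change of the second paragraph: at a fixed basepoint one only controls $\angle_{x_0}(\eta,\zeta)$, which may be much smaller than $\theta$ even though the Tits distance $d_T(\eta,\zeta)$ exceeds $\theta$.
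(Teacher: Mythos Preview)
The paper does not prove this statement; Theorem~\ref{PS} is quoted from Papasoglu--Swenson and used as a black box. So there is no ``paper's proof'' to compare against. What you have written is essentially the argument from the original source: pass to a subsequence by compactness of $\overline{X}$, pick a basepoint realizing the angular metric $\angle(\eta,\zeta)$ (or a geodesic when $d_T(\eta,\zeta)>\pi$), use the angle-sum inequality for the triangle $(y,g_nx_0,w)$ together with the isometry $g_n$, and conclude via semicontinuity that $\angle(\xi,\zeta^*)\leq \pi-\theta$. Your treatment of the edge cases $\theta\in\{0,\pi\}$ is correct.

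There is, however, a genuine slip in your justification of the key analytic fact. You write that the Alexandrov angle at a fixed vertex is \emph{lower} semicontinuous because it is ``the supremum, as $s,t\to 0$, of the Euclidean comparison angles''. This is backwards: in a $\cat(0)$ space the comparison angle $\overline{\angle}_{x_0}(\sigma_1(s),\sigma_2(t))$ is nondecreasing in $s,t$, so $\angle_{x_0}(\cdot,\cdot)$ is the \emph{infimum} over $s,t>0$ of continuous functions, which a priori gives only \emph{upper} semicontinuity. Fortunately the conclusion you need is still true, and in fact stronger: for fixed $p\in X$, the map $(a,b)\mapsto\angle_p(a,b)$ is \emph{continuous} on $(\overline{X}\setminus\{p\})^2$. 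The clean way to see this is that geodesics (segments or rays) from $p$ to points of $\overline{X}$ vary continuously with their endpoint, and if $\sigma_n\to\sigma$ uniformly on compacta then $\angle_p(\sigma_n,\sigma)\leq \overline{\angle}_p(\sigma_n(t),\sigma(t))\to 0$ for each fixed $t>0$, so the direction map into the space of directions at $p$ is continuous; continuity of $\angle_p$ then follows from the triangle inequality for angles at $p$. With this correction your two limiting steps---$\liminf_n\angle_{x_0}(g_n^{-1}y,\zeta_n)\geq\angle_{x_0}(\eta,\zeta)$ and $\angle_y(\xi,\zeta^*)\leq\liminf_n\angle_y(g_nx_0,g_n\zeta_n)$---are justified (indeed with equality of the limits), and the rest of the proof goes through.
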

	
	We want to apply this theorem in order to prove that the limit measures given by Theorem \ref{furstenberg73} are Dirac measures. First, we start by a technical Lemma. 
	
	\begin{lem}\label{separation}
		Let $g$ be a rank one isometry in $G$, with fixed points $g^{+}, \, g^{-} \in \bd X$ respectively attractive and repulsive. Then there exists $U^{+}, U^{-} \subseteq \bd X$ neighbourhoods of $g^{+}, \, g^{-}$ respectively such that for all $ \xi \in \bd X$, \begin{equation*}
			B_T(\xi, \pi)  \cap U^{-} \neq \emptyset  \Rightarrow B_T(\xi, \pi)  \cap U^{+} = \emptyset,
		\end{equation*}
		and \begin{equation}
			B_T(\xi, \pi)  \cap U^{+} \neq \emptyset  \Rightarrow B_T(\xi, \pi)  \cap U^{-} = \emptyset.
		\end{equation} In other words, we can find neighbourhoods of the fixed points of $g$ small enough so that the Tits ball of radius $\pi $ around any point $\xi \in \bd X$ do not intersect both neighbourhoods simultaneously.
	\end{lem}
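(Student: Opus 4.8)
The plan is to use the fact that $g^{+}$ and $g^{-}$ are visibility points: since $g$ is rank one, Theorem \ref{tits} (together with \cite[Lemma 1.7]{ballman_buyalo08}) gives $d_T(g^{+}, \eta) = +\infty$ for every $\eta \in \bd X - \{g^{-}\}$, and symmetrically $d_T(g^{-}, \eta) = +\infty$ for every $\eta \neq g^{+}$. In particular $d_T(g^{+}, g^{-}) = +\infty$. The key geometric obstruction we want to exploit is that if some Tits ball $B_T(\xi,\pi)$ met small neighbourhoods of both $g^{+}$ and $g^{-}$, then by the triangle inequality for $d_T$ the points $g^{+}$ and $g^{-}$ would be within Tits distance roughly $2\pi$ of each other, contradicting $d_T(g^{+},g^{-}) = +\infty$.

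First I would argue by contradiction: suppose no such neighbourhoods exist. Then taking a countable neighbourhood basis $(V^{+}_k)_k$ of $g^{+}$ and $(V^{-}_k)_k$ of $g^{-}$ shrinking to the respective points, for every $k$ there would be a point $\xi_k \in \bd X$ with $B_T(\xi_k,\pi) \cap V^{+}_k \neq \emptyset$ and $B_T(\xi_k,\pi) \cap V^{-}_k \neq \emptyset$. Pick $\alpha_k \in B_T(\xi_k,\pi) \cap V^{+}_k$ and $\beta_k \in B_T(\xi_k,\pi) \cap V^{-}_k$. Then $d_T(\alpha_k,\beta_k) \leq d_T(\alpha_k,\xi_k) + d_T(\xi_k,\beta_k) \leq 2\pi$. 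Now $\alpha_k \to g^{+}$ and $\beta_k \to g^{-}$ in the visual topology (since they lie in shrinking neighbourhoods), so by the lower semicontinuity of $d_T$ with respect to the visual topology (Theorem \ref{tits}, part \ref{semi continuite}), $d_T(g^{+}, g^{-}) \leq \liminf_k d_T(\alpha_k, \beta_k) \leq 2\pi < +\infty$, which contradicts $d_T(g^{+},g^{-}) = +\infty$. Hence suitable $U^{+}, U^{-}$ exist; by symmetry of the roles of $g^{+}$ and $g^{-}$ the same pair works for both implications in the statement.

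The main subtlety to be careful about is not the contradiction itself but making sure the neighbourhoods can be chosen \emph{uniformly in $\xi$}: the argument above produces, for each $k$, neighbourhoods $V^{\pm}_k$ that fail, and we conclude that for $k$ large enough $V^{+}_k, V^{-}_k$ must in fact work for \emph{all} $\xi$ simultaneously — this is exactly what the contradiction rules out, since the negation of "for all $\xi$" is "there exists $\xi_k$", and that is what fed the sequence. So once we have derived the contradiction for a single sequence $(\xi_k)$, we are done: there is some $k$ for which $U^{+} := V^{+}_k$ and $U^{-} := V^{-}_k$ satisfy the required property. A minor point worth recording is that $B_T(\xi,\pi)$ is visually closed (noted in the remark after Theorem \ref{tits}), so the sets $B_T(\xi_k,\pi) \cap V^{\pm}_k$ are genuine subsets and picking points $\alpha_k,\beta_k$ is legitimate; no measurability or closedness issue arises. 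I expect the only real work is phrasing the logical quantifier inversion cleanly; the geometry is entirely contained in "visibility points are at infinite Tits distance from everything" plus lower semicontinuity of $d_T$.
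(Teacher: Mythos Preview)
Your proof is correct and uses essentially the same approach as the paper: argue by contradiction with shrinking neighbourhoods and invoke lower semicontinuity of $d_T$ together with the Tits-isolation of the endpoints of a rank one axis. The only difference is cosmetic: the paper extracts a visual limit $\xi$ of the sequence $(\xi_k)$ and derives $\xi = g^{-}$ and $\xi = g^{+}$ separately, whereas you shortcut via the triangle inequality $d_T(\alpha_k,\beta_k)\le 2\pi$ and a single application of semicontinuity to obtain $d_T(g^{+},g^{-})\le 2\pi$, which is a slight streamlining of the same argument.
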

	
	\begin{figure}
		\centering
		\begin{center}
			\begin{tikzpicture}[scale=0.5]
				\draw (0,0) circle (5cm); 
				\draw[line width =1mm] (3.83, 3.21) arc (40 : 70 : 5cm) node[right=5mm] {$B_T(\xi, \pi)$};
				\draw (3.21, 3.83) to[bend right] (5,0) node[left=3mm] {$U^{+}$} ;
				\draw (-4.62, 1.91) to[bend left] (-4.62, -1.91) node[right=2mm] {$U^{-}$} ;
				\draw  (-5, 0) to[bend right = 10] (4.83, 1.3) ; 
				\draw (4.83, 1.3) node[above, right] {$g^{+}$}; 
				\draw (-5,0) node[above, left]{$g^{-}$};
			\end{tikzpicture}
		\end{center}
		\caption{Illustration of Proposition \ref{separation}.}
	\end{figure}
	
	\begin{proof}
		By contradiction, assume there exist decreasing sequences of neighbourhoods $\{ U^{+}_n \} $ and $\{ U^{-}_n \} $  of respectively $g^{+}$ and $g^{-}$, such that $\cap_n U^{+}_n = g^{+}$ and $\cap_n U^{-}_n = g^{-}$, and a sequence of points $(\xi_n) \subseteq \bd X$ such that for all $n \in \mathbb{N}$, $B_T(\xi_n, \pi) \cap U^{-}_n \neq \emptyset $ and $B_T(\xi_n, \pi) \cap U^{+}_n \neq \emptyset $. Notice that due to Theorem \ref{tits}, since $g^{-}$ is a fixed point of a rank one isometry, $d_T(g^{-}, \eta) = + \infty$ for all $\eta \in \bd X - \{ g^{-}\} $. 
		
		For all $n \in \mathbb{N}$, take $z_n \in B_T(\xi_n, \pi) \cap U^{-}_n$. By hypothesis, $z_n \rightarrow g^{-}$. Since $\bd X$ is compact, then passing to a subsequence, $\xi_n \rightarrow \xi \in \bd X$ in the visual topology. By lower semicontinuity of the Tits metric (Theorem \ref{tits}) , $\liminf d_T (\xi_n, z_n) \geq d_T(g^{-}, \xi )$, hence $d_T(g^{-}, \xi) \leq \pi$, thus $\xi = g^{-}$. The same argument with $z_n \in B_T(\xi_n, \pi) \cap U^{+}_n$ gives $\xi = g^{+}$, a contradiction. 
	\end{proof}

	The next step is to show that the measure of this ball is actually zero. Let us begin by showing that $\nu$ is non-atomic, that is, for all $\xi \in \overline{X}$, $\nu(\xi) = 0 $. The next Lemma follows standard ideas. 
	
	\begin{lem}\label{non atomic}
		Let $(G, \mu)$ be a discrete group acting by isometries on a metric space $X$, and let $\nu$ be a $\mu$-stationary measure on $X$. If the action $G \curvearrowright X$ does not have finite orbits, then $\nu$ is non-atomic. 
	\end{lem}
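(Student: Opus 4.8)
The plan is to argue by contradiction: suppose $\nu$ has an atom, and produce a finite orbit of the action. The engine is the classical ``maximal atom'' argument: stationarity forces the set of atoms of maximal mass to be $\supp(\mu)$-invariant, and admissibility upgrades this to $G$-invariance; since that set is finite by the probability constraint, we get a finite orbit.

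\textbf{Step 1: the maximal atomic mass is attained, on a finite set.} Set $\alpha := \sup_{x \in X} \nu(\{x\})$; if $\nu$ has an atom then $\alpha > 0$. Since $\nu$ is a probability measure, for any $\beta > 0$ the set $\{x : \nu(\{x\}) \geq \beta\}$ has at most $1/\beta$ elements, hence is finite. Taking $\beta = \alpha/2$ and using the definition of the supremum, this set is nonempty and finite, so $\nu(\{\cdot\})$ attains a maximum on it, and that maximum must equal $\alpha$. Thus $A := \{x \in X : \nu(\{x\}) = \alpha\}$ is a nonempty finite set.

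\textbf{Step 2: $A$ is invariant under $\supp(\mu)^{-1}$.} Fix $y \in A$. Because $G$ is countable, stationarity gives $\alpha = \nu(\{y\}) = (\mu \ast \nu)(\{y\}) = \sum_{g \in G} \mu(g)\, \nu(\{g^{-1}y\})$. Each term satisfies $\nu(\{g^{-1}y\}) \leq \alpha$ and $\sum_{g} \mu(g) = 1$, so this convex combination equals its upper bound only if $\nu(\{g^{-1}y\}) = \alpha$ for every $g$ with $\mu(g) > 0$; that is, $g^{-1} y \in A$ for all $g \in \supp(\mu)$, i.e.\ $s^{-1} A \subseteq A$ for every $s \in \supp(\mu)$.

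\textbf{Step 3: propagate to all of $G$ and conclude.} Iterating Step 2, $hA \subseteq A$ for every $h$ in the sub-semigroup generated by $\{s^{-1} : s \in \supp(\mu)\}$. Since $\mu$ is admissible, $\supp(\mu)$ generates $G$ as a semigroup, and $(s_1 \cdots s_n)^{-1} = s_n^{-1}\cdots s_1^{-1}$ shows $\{s^{-1} : s \in \supp(\mu)\}$ does as well; hence $hA \subseteq A$ for all $h \in G$. Picking $x_0 \in A$, we get $G\cdot x_0 \subseteq A$, so $G \cdot x_0$ is finite, contradicting the hypothesis that the action has no finite orbit. Therefore $\nu$ is non-atomic. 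I do not expect a genuine obstacle here; the only point requiring a little care is Step 1 — checking that the supremum $\alpha$ is actually attained and that $A$ is genuinely finite — after which the convexity argument is routine.
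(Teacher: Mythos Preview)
Your proof is correct and follows essentially the same ``maximal atom'' argument as the paper: both consider the finite, nonempty set of atoms of maximal mass, use stationarity to show it is invariant, and derive a finite orbit. Your write-up is in fact slightly more careful than the paper's: you explicitly verify that the supremum is attained and that stationarity a priori only yields invariance under $\supp(\mu)^{-1}$, then invoke admissibility to upgrade to full $G$-invariance, whereas the paper jumps directly to ``for all $g \in G$''.
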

	
	\begin{proof}
		Let us assume that there exists an atom for $\nu$. Let $m :=  \max\{ \nu(x) : x \in \overline{X}\}$ and $X_m = \{ x \in \overline{X} : \nu (x) = m\}$. The set $X_m $ is non-empty by hypothesis, and finite because $\nu(\overline{X}) = 1$.  
		Let $x \in X_m$. Since $\nu $ is $\mu$-stationary,  $\mu \ast \nu (x ) = \nu (x)$, hence
		\begin{equation}
			\sum_{g \in G} \mu(g) \nu(g^{-1} x ) = m \nonumber. 
		\end{equation}
		Then, for all $g \in G$, $\nu(g^{-1} x) = m$. The set $X_m $ is $G$-invariant, finite and non-empty, which is in contradiction with the fact that the action does not have finite orbits. 
	\end{proof}
	
	\begin{rem}\label{r1}
		The group $G$ acts on $(\partial_T X, d_T)$ by isometries, hence for all $\xi \in \bd X$, $f \in G$, $fB_T(\xi, \pi) = B_T(f\xi, \pi)$. 
	\end{rem}
	
	\subsection{$B_T(\xi, \pi)$ is a null set}
	
	In this section, we show that $\nu(B_T(\xi, \pi))=0$. In order to do this, we use a Lemma from J. Maher and G. Tiozzo (\cite{maher_tiozzo18}), and North-South dynamics on the boundary. 
	
	\begin{lem}[{\cite[Lemma 4.5]{maher_tiozzo18}}]\label{mt}
		Let $G$ be a discrete group acting by homeomorphisms on a metric space $M$, and let $\mu \in \prob(G)$ whose support generates $G$ as a semigroup. Let $\nu $ be a $\mu$-stationary measure on $M$. Let $Y \subseteq M$,and assume that there exists a sequence of positive numbers $(\varepsilon_n)_{n \in \mathbb{N}}$ such that for all $f \in G$, there exists a sequence $(g_n)_{n \in \mathbb{N}}$ such that the translates $fY, g^{-1}_1 f Y, g^{-1}_2 f Y, \dots $ are all disjoint and for all $g_n$, there exists $m \in \mathbb{N}$ such that $\mu_m(g_n) \geq \varepsilon _n$. Then $\nu(Y)= 0$. 
	\end{lem}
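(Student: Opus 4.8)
The plan is to argue by contradiction: assume $\nu(Y)>0$. Everything rests on one elementary consequence of stationarity. Since $\mu\ast\nu=\nu$, induction on $m$ gives $\mu_m\ast\nu=\nu$ for every $m\ge 1$, that is $\nu(A)=\sum_{h\in G}\mu_m(h)\,\nu(h^{-1}A)$ for every Borel set $A\subseteq M$; discarding all but one term,
\[
\nu(A)\;\ge\;\mu_m(h)\,\nu(h^{-1}A)\qquad\text{for all }A\subseteq M,\ h\in G,\ m\ge 1.
\]
Equivalently, stationarity lets one transport mass along the walk with a definite lower bound: if $g\cdot B=C$ and $\mu_m(g)\ge\delta$ for some $m$, then $\nu(C)\ge\delta\,\nu(B)$. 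One consequence, combining this with admissibility, is that positivity is hereditary: if $\nu(fY)=0$ for some $f\in G$, then $\nu(h^{-1}fY)=0$ for every $h\in\supp(\mu_m)$ and every $m$, hence for every $h\in G$ because $\bigcup_{m\ge1}\supp(\mu_m)=G$; in particular $\nu(Y)=0$, a contradiction. So under our standing assumption $\nu(fY)>0$ for every $f\in G$, and we may run the argument with whichever $f$ is convenient (for the lemma as stated one may simply take $f=e$).

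Now fix such an $f$, and let $(g_n)_n$, together with exponents $m_n$ satisfying $\mu_{m_n}(g_n)\ge\varepsilon_n$, be the data furnished by the hypothesis, so that the sets $fY,\,g_1^{-1}fY,\,g_2^{-1}fY,\dots$ are pairwise disjoint. Applying the transport inequality once for each $n$ gives
\[
\nu\big(g_n^{-1}fY\big)\;\ge\;\varepsilon_n\,\nu(fY)\qquad\text{for every }n .
\]
Since $\nu$ is a probability measure and the $g_n^{-1}fY$ are pairwise disjoint,
\[
1\;\ge\;\nu\Big(\bigcup_{n\ge1}g_n^{-1}fY\Big)\;=\;\sum_{n\ge1}\nu\big(g_n^{-1}fY\big)\;\ge\;\nu(fY)\sum_{n\ge1}\varepsilon_n .
\]
Because the data can be chosen with $\sum_n\varepsilon_n=\infty$ — equivalently, because for every $N$ one produces $N$ disjoint translates of $fY$ each of $\nu$-mass at least $\varepsilon\,\nu(fY)$ for a fixed $\varepsilon>0$ — this is incompatible with $\nu(fY)>0$. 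Hence $\nu(fY)=0$, and taking $f=e$ we conclude $\nu(Y)=0$, the desired contradiction.

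The step that needs care is the orientation of the transport inequality: one must read the walk in the direction for which $\mu_{m_n}$ controls the element carrying the $n$-th disjoint translate \emph{onto} $fY$ (equivalently, one must decide whether it is $\mu_{m_n}(g_n)$ or $\mu_{m_n}(g_n^{-1})$ that enters), so that each disjoint translate is bounded \emph{below}, and not above, by $\varepsilon_n\,\nu(fY)$; this is precisely the combinatorial content built into the hypothesis. One must also ensure that enough disjoint translates are available for the weights $\varepsilon_n$ not to be summable, since it is only through the inequality $\nu(fY)\sum_n\varepsilon_n\le 1$ that $\nu(fY)=0$ is forced. The hereditary-positivity observation is the single place where admissibility of $\mu$ (that $\supp(\mu)$ generates $G$ as a semigroup) is genuinely used.
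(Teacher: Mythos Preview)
The paper does not prove this lemma; it is quoted from Maher--Tiozzo without proof, so there is no argument in the paper to compare against. That said, your proposal has a genuine gap.

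The transport inequality you extract from stationarity, $\nu(A) \ge \mu_m(h)\,\nu(h^{-1}A)$, applied with $A = fY$ and $h = g_n$, yields $\nu(fY) \ge \varepsilon_n\,\nu(g_n^{-1}fY)$ --- an \emph{upper} bound on $\nu(g_n^{-1}fY)$, not the lower bound $\nu(g_n^{-1}fY) \ge \varepsilon_n\,\nu(fY)$ that you claim. For your inequality you would need $\mu_m(g_n^{-1}) \ge \varepsilon_n$, which is not what the hypothesis provides. You flag this orientation issue in your final paragraph but do not resolve it; the assertion that ``this is precisely the combinatorial content built into the hypothesis'' is not correct. You also invoke $\sum_n \varepsilon_n = \infty$, which appears nowhere in the statement and is not verified in the paper's application (Proposition~\ref{zero measure}), where the $\varepsilon_n$ are merely positive.

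The actual argument introduces $M := \sup_{f \in G} \nu(fY)$. For $f$ with $\nu(fY) > M - \delta$, stationarity together with the bound $\nu(h^{-1}fY) \le M$ for every $h$ gives
\[
M - \delta < \nu(fY) \le \mu_{m_n}(g_n)\,\nu(g_n^{-1}fY) + \bigl(1 - \mu_{m_n}(g_n)\bigr)M,
\]
hence $\nu(g_n^{-1}fY) > M - \delta/\varepsilon_n$. Summing over the first $N$ disjoint translates yields $1 \ge (N+1)M - \delta\bigl(1 + \sum_{n \le N} 1/\varepsilon_n\bigr)$; choosing first $N$ with $(N+1)M > 1$ and then $\delta$ small enough --- possible exactly because the $\varepsilon_n$ are fixed \emph{before} $f$ is chosen --- gives the contradiction. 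The role of the uniform sequence $(\varepsilon_n)$ is thus to let $\delta$ be chosen independently of $f$, not to bound the translates from below directly.
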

	
	We can then apply the Lemma \ref{mt} to obtain the following result: 
	
	\begin{prop}\label{zero measure}
		With the previous notations, for all $\xi \in \bd X$, $\nu (B_T( \xi, \pi)) = 0$. 
	\end{prop}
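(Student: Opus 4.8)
The plan is to apply Lemma~\ref{mt} with $M = \overline{X}$, $Y = B_T(\xi, \pi)$, and the sequence of positive reals $(\varepsilon_n)$ to be extracted below. So, fixing $\xi \in \bd X$ and an arbitrary $f \in G$, I must produce a sequence $(g_n)_n \subseteq G$ such that the translates $fY, g_1^{-1} fY, g_2^{-1} fY, \dots$ are pairwise disjoint, and such that each $g_n$ is hit by some convolution power $\mu_m$ with weight bounded below by $\varepsilon_n$. By Remark~\ref{r1}, $fY = f B_T(\xi, \pi) = B_T(f\xi, \pi)$, so after renaming $f\xi$ as $\xi$ it suffices to disjointify Tits balls of radius $\pi$ by a suitable sequence of group elements. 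The natural candidates for the $g_n$ are powers of a fixed rank one element: since $G$ contains a rank one element $g$, with attracting/repelling fixed points $g^+, g^-$, I will set $g_n = g^{k_n}$ for a rapidly increasing sequence $k_n$. Admissibility of $\mu$ then guarantees, for each $n$, that there exists $m$ with $\mu_m(g^{k_n}) =: \varepsilon_n > 0$, which handles the measure-theoretic hypothesis of Lemma~\ref{mt}.

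The geometric heart is the disjointness. First, by Lemma~\ref{separation} applied to the rank one element $g$, there are neighbourhoods $U^+$ of $g^+$ and $U^-$ of $g^-$ such that no Tits ball $B_T(\eta, \pi)$ meets both $U^+$ and $U^-$. I will want the $g^{-k_n} f Y = B_T(g^{-k_n}\xi, \pi)$ to eventually be "concentrated near $g^-$" (more precisely, to be contained in a region forcing pairwise disjointness). The key dynamical input is the $\pi$-convergence theorem, Theorem~\ref{PS}: taking the sequence of isometries $g^{-k}$, which satisfies $g^{-k} x \to g^-$ and $(g^{-k})^{-1} x = g^k x \to g^+$, Theorem~\ref{PS} (with, say, $\theta = \pi$ so that $B_T(g^+, \pi-\theta) = B_T(g^+,0) = \{g^+\} \subseteq U^+$) says that for every compact $K \subseteq \bd X - B_T(g^+, \pi)$, eventually $g^{-k} K \subseteq U^+$. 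I need to arrange that $B_T(\xi, \pi)$, or rather a cofinite family of its translates, lies inside such a compact $K$; here is where I expect to use that $\bd X - B_T(g^+, \pi)$ is an open neighbourhood of $g^-$ (since $d_T(g^-, g^+) = +\infty > \pi$ as $g^-$ is a visibility point), together with the fact — established already via Proposition~\ref{non elem caprace fuj} and the non-elementarity hypothesis — that $G$ contains a \emph{second} rank one element independent from $g$, whose fixed points give a point of $\bd X$ away from $g^\pm$ that we can translate around.

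Let me restructure the disjointness argument more carefully, since that is the main obstacle. The cleanest route: by Lemma~\ref{subsequence} (applied to the cyclic group, or directly) and North--South dynamics (Theorem~\ref{hamenstadt}), the points $g^{-k}\xi$ converge to $g^-$ in the visual topology, \emph{unless} $\xi = g^+$, in which case $g^{-k}\xi = g^+$ for all $k$. In the latter degenerate case I choose a different rank one element $h$ independent of $g$ (available by non-elementarity) and run the argument with $h$ in place of $g$, since $g^+$ cannot be a fixed point of both. So assume $g^{-k}\xi \to g^-$. Now I claim that for all $j < k$ sufficiently large, $B_T(g^{-j}\xi, \pi) \cap B_T(g^{-k}\xi,\pi) = \emptyset$: indeed $g^{-k}\xi$ is eventually inside $U^-$, while $g^{-j}\xi = g^{k-j}(g^{-k}\xi)$, and applying Theorem~\ref{PS} to the isometries $g^{k-j}$ (for $k - j$ large) sends the relevant compact neighbourhood of $g^-$ into $U^+$, so $g^{-j}\xi \in U^+$. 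Then if some $\eta$ lay in both balls, $B_T(\eta,\pi)$ would meet both $U^+$ and $U^-$, contradicting Lemma~\ref{separation}. A short bookkeeping argument — passing to a subsequence $k_n$ growing fast enough that all the "sufficiently large" thresholds above are met for every pair $n < n'$, and verifying $fY$ itself is disjoint from all $g^{-k_n} fY$ by the same mechanism (as $fY = B_T(f\xi,\pi)$ sits away from $g^-$ once we also know $f\xi \neq g^-$, or by absorbing $fY$ into the family after one more index shift) — yields the pairwise disjoint family required by Lemma~\ref{mt}. Hence $\nu(B_T(f\xi,\pi)) = \nu(fY) = 0$; since this holds for the arbitrary starting point $\xi$ (equivalently since $f$ was arbitrary and $f\xi$ ranges over the whole orbit, but in fact we fixed $\xi$ at the outset), we conclude $\nu(B_T(\xi,\pi)) = 0$ for all $\xi \in \bd X$, as claimed. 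The main obstacle is the careful choice of the exponents $k_n$ and the repeated invocation of Theorem~\ref{PS} with varying isometry sequences $g^{k-j}$ to guarantee \emph{uniform} disjointness across all pairs, rather than just consecutive ones; handling the degenerate case $\xi \in \{g^+, g^-\}$ via a second independent rank one element is a necessary but routine wrinkle.
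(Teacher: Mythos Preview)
Your overall strategy of invoking Lemma~\ref{mt} is correct, but the disjointness argument has a genuine error, and there is a second, more structural gap concerning the hypothesis of Lemma~\ref{mt}.

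\textbf{The disjointness claim is false.} You assert that for $j<k$ both large, $g^{-j}\xi \in U^{+}$ while $g^{-k}\xi \in U^{-}$, via the identity $g^{-j}\xi = g^{k-j}(g^{-k}\xi)$ and $\pi$-convergence. But $g^{-j}\xi$ depends only on $j$, and since $\xi \neq g^{+}$ North--South dynamics gives $g^{-j}\xi \to g^{-}$; hence for \emph{all} large $j$ one has $g^{-j}\xi \in U^{-}$, not $U^{+}$. Both centres $g^{-j}\xi$ and $g^{-k}\xi$ end up in $U^{-}$, and Lemma~\ref{separation} says nothing about two Tits balls whose centres both lie in $U^{-}$. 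The flaw in your reasoning is that the threshold on $k-j$ coming from Theorem~\ref{PS} or Theorem~\ref{hamenstadt} depends on how close $g^{-k}\xi$ is to $g^{-}$, and this threshold blows up as $k\to\infty$; you cannot take $j$ large and still have $k-j$ beat it.

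\textbf{The sequence $(\varepsilon_n)$ must not depend on $f$.} Lemma~\ref{mt} requires a \emph{single} sequence $(\varepsilon_n)$ chosen before $f$. Your exponents $k_n$ are selected to beat ``sufficiently large'' thresholds that come from the convergence $g^{-k}\xi \to g^{-}$, where $\xi$ is (after your renaming) $f\xi_0$. Thus $k_n$ depends on $f$, and so does $\varepsilon_n := \mu_m(g^{k_n})$. This violates the hypothesis of the lemma.

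The paper circumvents both problems by using the second independent rank one element $h$ not merely for the degenerate case $\xi \in \{g^{\pm}\}$ but systematically. A strengthened version of Lemma~\ref{separation} gives four neighbourhoods $U_g^{\pm}, U_h^{\pm}$ such that any Tits ball $B_T(\eta,\pi)$ meets at most one of them. One fixes $k_0$ (from Theorem~\ref{hamenstadt}, independent of $f$) and then, for each $f$, chooses between the sequences $(h^{nk_0})_n$ and $(g^{nk_0})_n$ according to whether $fY$ meets $U_g^{\pm}$ or not. In every case $fY$ is disjoint from \emph{both} neighbourhoods of the chosen element, so the translates land in a region disjoint from $fY$, and pairwise disjointness follows by equivariance. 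Crucially $k_0$ is fixed, so $\varepsilon_n = \min(\mu_m(h^{-nk_0}),\mu_{m'}(g^{-nk_0}))$ does not depend on $f$.
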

	\begin{proof}

	By assumption, the action of $G$ on $X$ is non-elementary and there exists a rank one isometry. By Proposition \ref{non elem caprace fuj}, there exist two rank one isometries $g$ and $h$ whose fixed points are mutually disjoint. If we repeat the argument given in Lemma \ref{separation}, we obtain that there exists $U^{+}_g, U^{-}_g, U^{+}_h , U^{-}_h $ neighbourhoods of $g^{+}, g^{-}, h^{+}, h^{-}$ respectively such that for all $\xi \in \bd X, \, B_T(\xi, \pi) \cap U^{+}_g \neq \emptyset \Rightarrow B_T(\xi, \pi) \cap U = \emptyset$, for $U = U^{-}_g, U^{+}_h , U^{-}_h $ and $B_T(\xi, \pi) \cap U^{-}_g \neq \emptyset \Rightarrow B_T(\xi, \pi) \cap U = \emptyset$, for $U = U^{+}_g, U^{+}_h , U^{-}_h $.
	\newline

	Let us apply Lemma \ref{hamenstadt} to rank one isometries $g$ and $h$ with distinct fixed points. There exists $k_0$ such that for all $k \geq k_0$, $g^{k} (\bd X  - U^{-}_g) \subseteq U^{+}_g$, $g^{-k} (\bd X  - U^{+}_g) \subseteq U^{-}_g$, $h^{k} (\bd X  - U^{-}_h) \subseteq U^{+}_h$ and $h^{-k} (\bd X  - U^{+}_h) \subseteq U^{-}_h$. 
	
	Let now $\xi \in \bd X$ be a boundary point, $f \in G$ an isometry and write $Y := B_T(\xi, \pi)$. We have shown in Remark \ref{r1} that $fY = B_T(f \xi, \pi)$. We are looking for a sequence of elements $(g_n)_n$ of $G$ such that the conditions in Lemma~\ref{mt} are verified. There are three cases possible: 
	\newline
	
	\textbf{\underline{First case}}:
	If $fY \cap \ugm \neq \emptyset$, then $fY \cap \uhm = \emptyset$ and $fY \cap \uhp = \emptyset$. Hence, for all $k \geq k_0$, $h^{k} f Y \subseteq \uhp $. The translates 
	\begin{equation*}
		\{ fY, h^{ k_0} f Y, h^{ 2k_0} f Y, \dots h^{ n  k_0} f Y\dots \}
	\end{equation*}
	are all disjoint. Indeed, $f Y \cap \uhp = \emptyset $ by hypothesis and if there exists $p \in \mathbb{N}$ such that $h^{ (n + p)k_0} f Y \cap h^{ n k_0} f Y \neq \emptyset$. Then $h^{pk_0} fY \cap fY \neq \emptyset$, which is impossible because $h^{pk_0} fY \subseteq \uhp $ and $fY \cap \uhp = \emptyset$. 
	\newline
	
	\textbf{\underline{Second case}}:
	If $fY \cap \ugp \neq \emptyset$, then the translates 
	\begin{equation*}
		\{ fY, h^{ k_0} f Y, h^{2k_0 }fY, \dots h^{ n  k_0} f Y\dots \}
	\end{equation*} are all disjoint for the same reasons.
	\newline
	
	\textbf{\underline{Third case}}: 
	If $fY \cap \ugm = \emptyset$ and $fY \cap \ugp = \emptyset$, then the same argument shows that the translates 
	\begin{equation*}
		\{ fY, g^{ k_0} f Y, g^{ 2k_0} f Y, \dots g^{ n k_0} f Y\dots \}
	\end{equation*}
	are all disjoint. 
	\newline
	
	\begin{figure}
		\centering
		\begin{center}
			\begin{tikzpicture}[scale=0.5]
				\draw (0,0) circle (5cm); 
				\draw[line width =1mm] (3.83, 3.21) arc (40 : 70 : 5cm) node[right=5mm] {$B_T(f\xi, \pi)$};
				\draw (3.21, 3.83) to[bend right] (5,0) node[left=3mm] {$U_h^{+}$} ;
				\draw (-4.62, 1.91) to[bend left] (-4.62, -1.91) node[right=2mm] {$U_g^{-}$} ;
				\draw  (-5, 0) to[bend right = 10] (4.83, 1.3) ; 
				\draw  (0,-5) to[bend right = 10] (-2.5, 4.33) ; 
				\draw  (1.29,-4.83) to[bend right = 40] (-1.29,-4.83) node[above=3mm] {$U_h^{-}$}; 
				\draw  (-3.53, 3.53) to[bend right = 40] (-1.29,4.83) node[below=3mm] {$U_h^{+}$};
				\draw (4.83, 1.3) node[above, right] {$g^{+}$}; 
				\draw (-5,0) node[above, left] {$g^{-}$};
				\draw (-2.5, 4.33) node[above, left] {$h^{+}$} ; 
				\draw (0,-5) node[below]{$h^{-}$} ;
			\end{tikzpicture}
		\end{center}
		\caption{Illustration of Lemma \ref{zero measure}.}
	\end{figure}
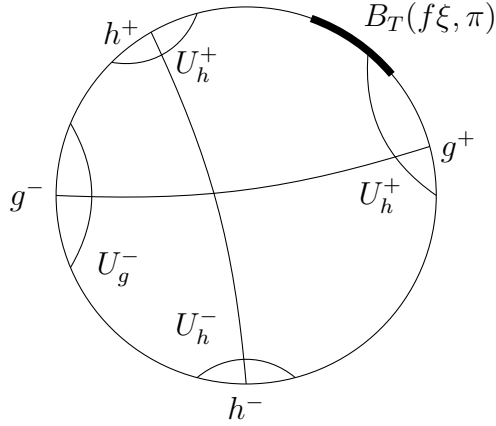
	
	Now, since the measure $\mu $ is admissible, for all $n \in \mathbb{N}$, there exists $ m \in \mathbb{N}$ (depending on $n$) such that $\mu_m (h^{-nk_0}) > 0$ . Similarly, for all $n\in \mathbb{N}$, there also exists $m' \in \mathbb{N} $ such that $\mu_{m'}(g^{-nk_0})  > 0 $. Consider the sequence $(\varepsilon_n)_{n \in \mathbb{N}}$ defined by $\varepsilon_n := \min \{\mu_m (h^{-nk_0}), \mu_{m'}(g^{nk_0})\}$. It is a sequence of positive numbers, and it does not depend on $f$. Finally, for all $f \in G$, we have found a sequence $(g_n)$ such that the translates $\{ fY, \dots, g^{-1}_n f Y\}$ are all disjoint and such that for all $n$, there exists $m \in \mathbb{N}$ such that $\mu_m(g_n) \geq \varepsilon_n$. The proposition follows from Lemma \ref{mt}. 
	
	\end{proof}

	We can now prove the main result of this section.

	\begin{thm}\label{unicite}	
		Let $G$ be a discrete group and $G \curvearrowright X$ a non-elementary action by isometries on a proper $\cat (0)$ space $X$. Let $\mu \in \prob(G) $ be an admissible probability measure on $G$, and assume that $G $ contains a rank one element. Then there exists a unique $\mu$-stationary measure $\nu \in \prob(X)$. 
	\end{thm}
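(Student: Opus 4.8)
The plan is to show that for \emph{every} $\mu$-stationary measure $\nu\in\prob(\overline{X})$, the limit measures $\nu_\omega$ furnished by Theorem~\ref{furstenberg73} are $\mathbb{P}$-almost surely Dirac masses $\delta_{z^{+}(\omega)}$ with $z^{+}(\omega)\in\bd X$, and that $z^{+}(\omega)$ does not depend on $\nu$. Since $\nu=\int_\Omega\nu_\omega\,d\mathbb{P}(\omega)$, this forces $\nu$ to be the pushforward of $\mathbb{P}$ under $\omega\mapsto z^{+}(\omega)$, an expression independent of $\nu$, and uniqueness follows.

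Fix $\nu$ and a basepoint $x\in X$. By Lemma~\ref{subsequence} and Remark~\ref{subsequence rem}, for $\mathbb{P}$-almost every $\omega$ we may choose a subsequence $\phi(n)$ and points $z^{+}(\omega),z^{-}(\omega)\in\bd X$ with $Z_{\phi(n)}(\omega)x\to z^{+}(\omega)$ and $Z_{\phi(n)}(\omega)^{-1}x\to z^{-}(\omega)$; note $Z_{\phi(n)}(\omega)\nu\to\nu_\omega$ too, being a subsequence of the convergent sequence $Z_n(\omega)\nu\to\nu_\omega$. The core step is to prove $\nu_\omega=\delta_{z^{+}(\omega)}$. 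Let $U$ be a visual neighbourhood of $z^{+}(\omega)$ and $\varepsilon>0$. Since $\overline{X}$ is compact metric, $\nu$ is Radon, so using Proposition~\ref{zero measure} (which gives $\nu(B_T(z^{-}(\omega),\pi))=0$) I can pick a compact $K\subseteq\bd X-B_T(z^{-}(\omega),\pi)$ with $\nu(K)>\nu(\bd X)-\varepsilon/2$ and a compact $C\subseteq X$ with $\nu(C)>\nu(X)-\varepsilon/2$, so that $\nu(K\cup C)>1-\varepsilon$. Theorem~\ref{PS}, applied with $\theta=\pi$, $g_n=Z_{\phi(n)}(\omega)$, $\xi=z^{+}(\omega)$, $\eta=z^{-}(\omega)$, gives $Z_{\phi(n)}(\omega)K\subseteq U$ for all large $n$; and if $C\subseteq\overline{B}(x,R)$, so that $Z_{\phi(n)}(\omega)C\subseteq\overline{B}\big(Z_{\phi(n)}(\omega)x,R\big)$, the standard fact that closed balls of fixed radius around a sequence converging to $z^{+}(\omega)$ eventually enter any visual neighbourhood of $z^{+}(\omega)$ gives $Z_{\phi(n)}(\omega)C\subseteq U$ for all large $n$ as well. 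Hence $K\cup C\subseteq Z_{\phi(n)}(\omega)^{-1}U$ eventually, so $Z_{\phi(n)}(\omega)\nu(U)=\nu\big(Z_{\phi(n)}(\omega)^{-1}U\big)\ge\nu(K\cup C)>1-\varepsilon$; letting $\varepsilon\to0$ yields $Z_{\phi(n)}(\omega)\nu(U)\to1$. Applying the portmanteau theorem to the closed set $\overline{U}$ (which gives $\nu_\omega(\overline{U})\ge\limsup_n Z_{\phi(n)}(\omega)\nu(\overline{U})=1$) and letting $U$ run over a decreasing sequence of neighbourhoods shrinking to $\{z^{+}(\omega)\}$, we conclude $\nu_\omega(\{z^{+}(\omega)\})=1$, i.e. $\nu_\omega=\delta_{z^{+}(\omega)}$.

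To remove the dependence on $\nu$, I would apply the previous step to two $\mu$-stationary measures $\nu_1,\nu_2$ and to their average $\nu_3=\tfrac12(\nu_1+\nu_2)$, which is again $\mu$-stationary. For $\mathbb{P}$-almost every $\omega$ we then have $Z_n(\omega)\nu_i\to\delta_{z_{\nu_i}(\omega)}$ for $i=1,2,3$; but $Z_n(\omega)\nu_3=\tfrac12\big(Z_n(\omega)\nu_1+Z_n(\omega)\nu_2\big)\to\tfrac12\big(\delta_{z_{\nu_1}(\omega)}+\delta_{z_{\nu_2}(\omega)}\big)$, and a probability measure $\tfrac12(\delta_a+\delta_b)$ is a Dirac mass only when $a=b$, so $z_{\nu_1}(\omega)=z_{\nu_2}(\omega)$ almost surely. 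Calling this common value $z^{+}(\omega)$, Theorem~\ref{furstenberg73} gives $\nu_1=\int_\Omega\delta_{z^{+}(\omega)}\,d\mathbb{P}(\omega)=\nu_2$.

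The main obstacle is the identification $\nu_\omega=\delta_{z^{+}(\omega)}$, and this is where the rank-one hypothesis really enters. The dynamical input of Theorem~\ref{PS} is phrased in the Tits metric and only controls boundary points outside the Tits ball $B_T(z^{-}(\omega),\pi)$, whereas the convergence we want lives in the coarser visual topology; the bridge between the two is Proposition~\ref{zero measure}, which makes this exceptional Tits ball $\nu$-negligible and therefore lets us find a compact set of almost full $\nu$-mass avoiding it that can be pushed into any prescribed visual neighbourhood of $z^{+}(\omega)$. By comparison the interior contribution $\nu|_X$ is handled routinely, once one notes that $Z_{\phi(n)}(\omega)$ drags any fixed bounded subset of $X$ towards $z^{+}(\omega)$.
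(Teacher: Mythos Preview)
Your proof is correct and follows essentially the same strategy as the paper's: extract a subsequence along which $Z_{\phi(n)}(\omega)x\to z^{+}(\omega)$ and $Z_{\phi(n)}(\omega)^{-1}x\to z^{-}(\omega)$, use Proposition~\ref{zero measure} and Theorem~\ref{PS} to conclude $Z_{\phi(n)}(\omega)\nu\to\delta_{z^{+}(\omega)}$, and identify this with $\nu_\omega$ via Theorem~\ref{furstenberg73}. You are in fact more careful than the paper in one place, explicitly handling the possible interior mass $\nu|_X$ by pushing bounded sets toward $z^{+}(\omega)$.

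Your averaging argument with $\nu_3=\tfrac12(\nu_1+\nu_2)$ is valid but unnecessary: the point $z^{+}(\omega)$ is defined as a subsequential limit of the orbit $(Z_{\phi(n)}(\omega)x)_n$, and this construction uses only $\omega$ and the basepoint $x$, never $\nu$. Hence the equality $\nu=\int_\Omega\delta_{z^{+}(\omega)}\,d\mathbb{P}(\omega)$ already expresses $\nu$ in terms independent of $\nu$, which is exactly how the paper concludes.
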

	
	\begin{proof}
		By Lemma \ref{subsequence} and Remark \ref{subsequence rem}, for all $x \in X$, there almost surely exists a subsequence $(Z_{\phi (n)}(\omega)x)_n $ of $(Z_n(\omega) x)_n$ and $z^{+} (\omega), z^{-}(\omega) \in \bd X$ such that $(Z_{\phi(n)}(\omega) x)_n$ converges to $z^{+} (\omega)$, and $(Z^{-1}_{\phi(n)}(\omega) x)_n$ converges to $z^{-}(\omega)$. By Theorem \ref{PS}, for all $K \subseteq \bd X - B_T( z^{-}(\omega), \pi)$, and for all $U$ neighbourhood of $z^{+}(\omega)$, there exists $n_0$ such that $Z_{\phi(n)}(\omega)K \subseteq U$ for all $n \geq n_0$. By Proposition \ref{zero measure}, $\nu (B_T( z^{-}(\omega), \pi)) = 0$ hence for all measurable $A$ of $\bd X$,  $Z_{\phi(n)}(\omega)\nu(A)$ converges to $1$ if $z^{+}(\omega) \in A$ and to $0$ otherwise. In other words, $Z_{\phi(n)}(\omega) \nu \rightarrow \delta_{z^{+}(\omega)}$ in the weak-$\ast$ topology. By Theorem \ref{furstenberg73}, $Z_n(\omega) \rightarrow \nu_\omega$ in the weak-$\ast$ topology, so $\nu_\omega = \delta_{z^{+}(\omega)}$ by uniqueness of the limit. Since $\nu = \int_{\Omega} \delta_{z^{+}(\omega)} d \mathbb{P}(\omega)$ and $z^{+}(\omega)$ does not depend upon $\nu$, the measure $\nu \in \prob(X) $ is the unique $\mu$-stationary measure on $\overline{X}$.
	\end{proof}
	
	\section{Convergence of the random walk} \label{convergence random walk}

	The goal of this section is to show that for all $x \in X$, $Z_n(\omega)x \rightarrow z^{+}(\omega)$ $\mathbb{P}$-almost surely. Note that since $G$ acts by isometries, if $ (Z_n(\omega)x) $ converges to $\xi \in \bd X$ for some $x \in X$, then $(Z_n(\omega)x') $ also converges to $\xi $ for all $x' \in X$. 
	
	The following is known as Portmanteau Theorem, and is a classical result in measure theory. 
	
	\begin{prop}
		Let $Y$ be a metric space, $P_n$ a sequence of probability measures on $Y$, and $P$ a probability measure on $Y$. Then the following are equivalent: 
		\begin{itemize}
			\item $P_n \rightarrow_n P$  in the weak$-\ast$ topology; 
			\item $\underset{n \rightarrow \infty}{\liminf} \ P_n(O) \geq P(O)$ for every open set $O \subseteq Y$. 
		\end{itemize}
	\end{prop}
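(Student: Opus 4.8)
The plan is to prove the two implications separately. The forward direction rests on approximating indicators of open sets from below by bounded continuous functions; the converse on the layer-cake representation of the integral together with Fatou's lemma. Throughout, ``$P_n \to P$ in the weak-$\ast$ topology'' means $\int_Y f\,dP_n \to \int_Y f\,dP$ for every $f \in C_b(Y)$; on the compact space $\overline{X}$ relevant to this paper this coincides with the usual weak-$\ast$ topology on $\prob(\overline{X})$.

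First I would prove that weak-$\ast$ convergence implies the $\liminf$ inequality. Given an open set $O \subseteq Y$, for each $k \in \mathbb{N}$ set $f_k(y) := \min\{1, k\, d(y, Y \setminus O)\}$, with the convention that this distance is $+\infty$ (so $f_k \equiv 1$) when $O = Y$. Each $f_k$ is bounded and continuous, satisfies $0 \leq f_k \leq \mathbf{1}_O$, and $f_k \uparrow \mathbf{1}_O$ pointwise: if $y \notin O$ then $f_k(y) = 0$ for all $k$, while if $y \in O$ then $d(y, Y \setminus O) > 0$ because $Y \setminus O$ is closed, so $f_k(y) = 1$ for $k$ large. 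Hence for every $k$,
\[
\liminf_{n \to \infty} P_n(O) \;\geq\; \liminf_{n \to \infty} \int_Y f_k \, dP_n \;=\; \int_Y f_k \, dP ,
\]
and letting $k \to \infty$ with the monotone convergence theorem on the right gives $\liminf_n P_n(O) \geq P(O)$.

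For the converse I would first note that the $\liminf$ condition for open sets is equivalent, by passing to complements, to $\limsup_n P_n(C) \leq P(C)$ for every closed $C \subseteq Y$. Fix $f \in C_b(Y)$; after an affine change of $f$ I may assume $0 \leq f \leq 1$. The layer-cake formula gives, for any probability measure $Q$ on $Y$,
\[
\int_Y f \, dQ \;=\; \int_0^1 Q(\{f > t\}) \, dt \;=\; \int_0^1 Q(\{f \geq t\}) \, dt ,
\]
the two expressions agreeing since $\{f>t\}$ and $\{f\geq t\}$ differ only for the at most countably many $t$ with $Q(\{f=t\})>0$. As $\{f>t\}$ is open and $\{f\geq t\}$ is closed, Fatou's lemma applied to $t \mapsto P_n(\{f>t\})$ on $[0,1]$ yields
\[
\liminf_{n} \int_Y f \, dP_n \;\geq\; \int_0^1 \liminf_{n} P_n(\{f > t\}) \, dt \;\geq\; \int_0^1 P(\{f > t\}) \, dt \;=\; \int_Y f \, dP ,
\]
while the reverse Fatou lemma (legitimate because $0 \leq P_n(\{f\geq t\}) \leq 1$) applied to $t \mapsto P_n(\{f\geq t\})$ yields
\[
\limsup_{n} \int_Y f \, dP_n \;\leq\; \int_0^1 \limsup_{n} P_n(\{f \geq t\}) \, dt \;\leq\; \int_0^1 P(\{f \geq t\}) \, dt \;=\; \int_Y f \, dP .
\]
Together these give $\int_Y f\,dP_n \to \int_Y f\,dP$, i.e. $P_n \to P$ in the weak-$\ast$ topology.

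There is no deep obstacle here, this being a classical fact. The only points needing a little care are the pointwise monotone convergence $f_k \uparrow \mathbf{1}_O$ (which genuinely uses that $Y \setminus O$ is closed, so its distance function is strictly positive exactly on $O$) and checking that $t \mapsto P_n(\{f>t\})$ and $t \mapsto P_n(\{f\geq t\})$ are measurable and uniformly bounded on $[0,1]$, so that both ordinary and reverse Fatou apply; the closed-set inequality, obtained by complementation from the stated open-set hypothesis, is the slightly more delicate half of the argument.
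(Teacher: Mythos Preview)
Your proof is correct and follows one of the standard routes to the Portmanteau theorem. Note, however, that the paper does not actually prove this proposition: it is stated there as a classical result in measure theory and used without proof, so there is no ``paper's own proof'' to compare against. Your argument supplies exactly the kind of self-contained justification one would cite from a textbook.
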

	
	\begin{cor}\label{portmanteau}
		Let $O$ be an open neighbourhood of $z^{+}(\omega)$ (for the visual topology). Then \begin{equation}
			\liminf_{n \rightarrow \infty} \nu(Z^{-1}_n(\omega) (O)) = \delta_{z^{+}(\omega)}(O) = 1. 
		\end{equation}
	\end{cor}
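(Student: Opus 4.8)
The plan is to deduce this directly from the two main results just established together with the Portmanteau Theorem stated above.

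First I would recall that for each $n$ and each $\omega$, the map $y \mapsto Z_n(\omega) y$ is a homeomorphism of $\overline{X}$, so the pushforward measure $Z_n(\omega)\nu$ satisfies, for every Borel set $A \subseteq \overline{X}$,
\[
\big(Z_n(\omega)\nu\big)(A) \;=\; \nu\big(Z_n^{-1}(\omega)(A)\big).
\]
In particular the quantity $\nu\big(Z_n^{-1}(\omega)(O)\big)$ occurring in the statement is exactly $\big(Z_n(\omega)\nu\big)(O)$, so it suffices to control the latter.

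Next, by Theorem \ref{furstenberg73}, for $\mathbb{P}$-almost every $\omega$ the sequence $Z_n(\omega)\nu$ converges in the weak-$\ast$ topology to a probability measure $\nu_\omega$; and by the uniqueness of the stationary measure, established in the proof of Theorem \ref{unicite}, this limit is $\nu_\omega = \delta_{z^{+}(\omega)}$ on the same almost-sure event on which $z^{+}(\omega)$ is defined (Remark \ref{subsequence rem}). Applying the Portmanteau Theorem to $P_n := Z_n(\omega)\nu$ and $P := \delta_{z^{+}(\omega)}$, and using that $O$ is open, we get
\[
\liminf_{n \rightarrow \infty} \big(Z_n(\omega)\nu\big)(O) \;\geq\; \delta_{z^{+}(\omega)}(O).
\]
Since $O$ is a neighbourhood of $z^{+}(\omega)$, the right-hand side equals $1$, and since each $\big(Z_n(\omega)\nu\big)(O) \leq 1$, the $\liminf$ must equal $1$. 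Combining this with the first display yields $\liminf_n \nu\big(Z_n^{-1}(\omega)(O)\big) = \delta_{z^{+}(\omega)}(O) = 1$, which is the claim.

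There is essentially no obstacle here: the corollary is a bookkeeping consequence of Theorems \ref{furstenberg73} and \ref{unicite}. The only points worth checking are that $O$, being open, is $\nu$-measurable, and that the identification $\nu_\omega = \delta_{z^{+}(\omega)}$ is valid on the full-measure event where the subsequential limits $z^{\pm}(\omega)$ exist — both of which were already arranged in the proof of Theorem \ref{unicite}.
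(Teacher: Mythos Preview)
The proposal is correct and is exactly the argument the paper intends: the corollary is stated immediately after the Portmanteau Theorem and is meant to follow directly from it together with the weak-$\ast$ convergence $Z_n(\omega)\nu \to \delta_{z^{+}(\omega)}$ established in the proof of Theorem~\ref{unicite}. Your write-up simply makes explicit the identification $(Z_n(\omega)\nu)(O) = \nu(Z_n^{-1}(\omega)O)$ and the squeeze between the Portmanteau lower bound and the trivial upper bound~$1$, which is precisely what the paper leaves implicit.
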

	
	The next result was proven by W. Ballmann in \cite{ballman95}, and will be fundamental in the sequel. 
	
	\begin{lem}[{\cite[Lemma III.3.1]{ballman95}}]\label{ballm}
		Let $\sigma : \mathbb{R} \rightarrow X$ a bi-infinite geodesic in $X$ that does not bound a flat half strip of width $R >0$, with endpoints $\sigmam $ and $\sigmap$ in $\bd X$. Then there exist neighbourhoods $U$, $V$ of  $\sigmam$ and $ \sigmap$ respectively in $\overline{X}$ such that for all $\xi \in U$ and $\eta \in V$, there is a geodesic $\sigma'$ in $X$ from $\xi $ to $\eta$, and for any such geodesic, we have $d(\sigma(0), \sigma') < R$. In particular, $\sigma'$ does not bound a flat strip of width $2R$. 
	\end{lem}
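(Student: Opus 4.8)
The plan is to argue by contradiction, aiming to manufacture a flat half-strip of width $R$ along $\sigma$ and then invoke the Flat Strip Theorem \cite[Theorem~II.2.13]{bridson_haefliger99}. First observe that a flat half-plane bounded by a ray of $\sigma$ would contain a flat half-strip of width $R$, so the hypothesis forces $\sigma$ to be a rank one geodesic; by the results of Bestvina and Fujiwara \cite{bestvina_fujiwara09} (see Section~\ref{rank one section}) $\sigma$ is therefore $C$-contracting for some $C\geq 0$. I would then choose neighbourhoods $U_0\ni\sigma(-\infty)$ and $V_0\ni\sigma(+\infty)$ in $\overline X$ small enough that the nearest-point projections onto $\sigma$ of $U_0$ and of $V_0$ lie far out towards $\sigma(-\infty)$ and towards $\sigma(+\infty)$ respectively, with margin exceeding the contraction constant. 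A Morse-type estimate for contracting geodesics then produces a constant $C'$ such that every geodesic segment of $X$ with one endpoint in $U_0$ and the other in $V_0$ meets $\overline{B}(\sigma(0),C')$ (since $\sigma(0)$ then lies well inside the projection of the segment onto $\sigma$); approximating boundary points by points of $X$ and taking an Arzel\`a--Ascoli limit (the arms of these segments escape to $\bd X$) moreover shows that any $\xi\in U_0\cap\bd X$ and $\eta\in V_0\cap\bd X$ are joined by a bi-infinite geodesic. This already yields the existence clause of the lemma.

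Next I would shrink $U_0,V_0$ to force the distance below $R$. If this were impossible there would be sequences $\xi_n\to\sigma(-\infty)$, $\eta_n\to\sigma(+\infty)$ and geodesics $\tau_n$ from $\xi_n$ to $\eta_n$ with $\rho_n:=d(\sigma(0),\tau_n)\geq R$, while $\rho_n\leq C'$ by the previous step. Reparametrise each $\tau_n$ so that its point $p_n$ nearest $\sigma(0)$ is at parameter $0$; then $p_n$ stays in the compact ball $\overline{B}(\sigma(0),C')$ and the two arms of $\tau_n$ have lengths tending to $\infty$ (their endpoints leave every compact set), so by Arzel\`a--Ascoli a subsequence converges, uniformly on compacta, to a bi-infinite geodesic $\gamma\colon\mathbb{R}\to X$. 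Using Proposition~\ref{continuite proj} one checks that $\gamma(-\infty)=\sigma(-\infty)$ and $\gamma(+\infty)=\sigma(+\infty)$, and since $d(\sigma(0),\tau_n(s))\geq\rho_n$ for every $s$ in the domain of $\tau_n$, letting $n\to\infty$ gives $d(\sigma(0),\gamma)=\lim_n\rho_n\geq R$.

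Then $\gamma$ and $\sigma$ are geodesic lines with the same pair of endpoints at infinity, so $t\mapsto d(\sigma(t),\gamma)$ is a bounded convex function on $\mathbb{R}$, hence constant and equal to $d(\sigma(0),\gamma)\geq R$; the Flat Strip Theorem then says that the convex hull of $\sigma\cup\gamma$ is isometric to a flat strip $\mathbb{R}\times[0,d]$ with $d\geq R$, and restricting to $[0,\infty)\times[0,R]$ produces a flat half-strip of width $R$ bounded by a ray of $\sigma$, contradicting the hypothesis. So after this shrinking every geodesic $\sigma'$ joining a point of $U$ to a point of $V$ satisfies $d(\sigma(0),\sigma')<R$. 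For the last assertion, if some such geodesic $\sigma'$ bounded a flat strip of width $2R$, its opposite boundary line $\sigma''$ would be asymptotic to $\sigma'$, hence also a geodesic from $\xi$ to $\eta$, so $d(\sigma(0),\sigma'')<R$ by what was just proved; picking $q\in\sigma''$ with $d(\sigma(0),q)<R$ one would get $2R=d(q,\sigma')\leq d(q,\sigma(0))+d(\sigma(0),\sigma')<2R$, which is absurd.

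I expect the main difficulty to be the uniform bound $\rho_n\leq C'$: some estimate of this type is genuinely needed, since in $\mathbb{R}^2$ one can take $\xi_n\to\sigma(-\infty)$, $\eta_n\to\sigma(+\infty)$ with $d(\sigma(0),[\xi_n,\eta_n])\to\infty$ (consistently with the hypothesis failing in $\mathbb{R}^2$), and it is precisely here that the rank one assumption enters; one must also be careful that the projection onto $\sigma$ behaves well for boundary points approaching the omitted endpoints $\sigma(\pm\infty)$. If one prefers not to use the contraction estimate, the escaping case $\rho_n\to\infty$ must be handled directly: the segments $[\sigma(0),p_n]$ then converge to a ray from $\sigma(0)$ perpendicular to $\sigma$, and a more delicate limiting argument in that perpendicular direction yields a flat half-plane bounded by a ray of $\sigma$ (hence a flat half-strip of width $R$), again contradicting the hypothesis; this escaping case is the real technical hurdle.
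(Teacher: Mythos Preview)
The paper does not prove this lemma: it is quoted from Ballmann's monograph \cite[Lemma~III.3.1]{ballman95} and used as a black box (in Lemma~\ref{unbdd} and in the horofunction estimate of Section~\ref{drift section}). There is therefore no proof in the paper to compare your proposal against.

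For what it is worth, your outline is sound and close in spirit to Ballmann's original argument, which also runs a limiting construction together with the Flat Strip Theorem. Your invocation of the Bestvina--Fujiwara contracting property to obtain the uniform bound $\rho_n\le C'$ is a legitimate shortcut for what you correctly identify as the crux; Ballmann's proof predates \cite{bestvina_fujiwara09} and extracts the needed compactness directly from the no-flat-strip hypothesis via angle comparison, which is what you sketch as the alternative ``escaping case''. One small remark: the statement in the paper says ``flat half strip'', but the paper's own usage elsewhere (Remark~\ref{flat strip} and the proof of Lemma~\ref{unbdd}) speaks of a ``flat strip'', which is also Ballmann's formulation; your argument covers either reading.
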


	We recall that we have proven in section \ref{stationary section} that $\nu$ is the unique stationary measure on $\overline{X}$, that  $\mathbb{P}$-almost surely, $Z_n (\omega)\nu \rightarrow \delta_{z^{+}(\omega)}$ for some $z^{+}(\omega) \in \bd X$, and that $\nu $ is distributed as $\nu = \int_{\Omega} \delta_{z^{+}(\omega)} d \mathbb{P}(\omega)$. In other words, for all open set $U$ in $\overline{X} $, $\nu(U) =  \mathbb{P} (\omega \in \Omega \, : \, z^{+}(\omega) \in U)$. We also know from Lemma \ref{non atomic} that $\nu$ is non atomic. 
	
	\begin{rem}\label{support}
		The \textit{support} of a measure $m$ on a topological space $Y$ is the smallest closed set $C$ such that $m(Y \setminus C)= 0$. In other words $y \in \supp(m)$ if and only if for all $U $ open containing $y $, $m(U) >0$. From what we have obtained in Section \ref{stationary section}, $\supp(\nu)$ is infinite, and for each $z \in \supp(\nu)$, $\nu(B_T(z, \pi))= 0 $. In other words, any two points of the support of $\nu $ are almost surely joined by a rank one geodesic. 
	\end{rem}

	Using Proposition \ref{continuite proj}, one can now prove that the random walk goes to infinity almost surely. 
	
	\begin{lem}\label{unbdd}
		Let $x \in X$ a basepoint. Then $d(x, Z_n(\omega)x) \rightarrow \infty $ almost surely. 
	\end{lem}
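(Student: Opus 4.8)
The strategy is to bootstrap the existence of an escaping subsequence (Lemma~\ref{subsequence}) into escape of the whole sequence, exploiting that we have already \emph{identified} the weak-$\ast$ limit of $Z_n(\omega)\nu$. Recall two facts established above. First, $\mathbb{P}$-almost surely the full sequence $Z_n(\omega)\nu$ converges weak-$\ast$ to $\delta_{z^{+}(\omega)}$ with $z^{+}(\omega)\in\bd X$: this is exactly how $\nu_\omega$ was identified in the proof of Theorem~\ref{unicite}, combined with Theorem~\ref{furstenberg73}. Second, $\nu$ is non-atomic (Lemma~\ref{non atomic}; the non-elementary rank one action has no finite orbit, since an axial element has positive translation length so $G$ fixes no point of $X$, while a finite orbit in $\bd X$ would be pointwise fixed by a power of each of two independent rank one elements $g,h$ furnished by Proposition~\ref{non elem caprace fuj} --- such a power being again rank one has only the two fixed points $g^{\pm}$, resp. $h^{\pm}$, in $\bd X$ by Theorem~\ref{hamenstadt}, so the orbit would lie in $\{g^{+},g^{-}\}\cap\{h^{+},h^{-}\}=\emptyset$). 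In particular, for any homeomorphism of $\overline{X}$ the pushforward of $\nu$ is non-atomic, hence is never a Dirac mass.

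Now suppose, for contradiction, that the (clearly measurable) event $\Omega_0:=\{\omega:\ \liminf_n d(x,Z_n(\omega)x)<\infty\}$ has positive probability, and fix $\omega\in\Omega_0$ lying in the full-measure set where $Z_n(\omega)\nu\to\delta_{z^{+}(\omega)}$. Pick $R>0$ and a subsequence $(n_k)$ with $d(x,Z_{n_k}(\omega)x)\le R$ for all $k$. Because $X$ is proper, the set $\{f\in\iso(X):\ d(fx,x)\le R\}$ is compact for the topology of uniform convergence on compact subsets of $X$ (Arzel\`a--Ascoli: isometries are $1$-Lipschitz and carry a fixed compact set into a fixed compact set, and a pointwise limit of isometries is an isometry). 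Passing to a further subsequence, $Z_{n_k}(\omega)\to g$ in $\iso(X)$ for some isometry $g$ of $X$.

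Next I would use Proposition~\ref{continuite proj} to pass from $X$ to $\overline{X}$: it controls visual neighbourhoods uniformly under bounded changes of basepoint, and from this one deduces that $\iso(X)$ acts continuously on $\overline{X}$, so that $Z_{n_k}(\omega)\eta\to g\eta$ in $\overline{X}$ for every $\eta\in\overline{X}$. Since $\overline{X}$ is compact, dominated convergence then gives $Z_{n_k}(\omega)\nu\to g_{\ast}\nu$ in the weak-$\ast$ topology. But the full sequence $Z_n(\omega)\nu$ converges to $\delta_{z^{+}(\omega)}$, so by uniqueness of weak-$\ast$ limits $g_{\ast}\nu=\delta_{z^{+}(\omega)}$; as $g$ extends to a homeomorphism of $\overline{X}$ this contradicts the last sentence of the first paragraph. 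Hence $\mathbb{P}(\Omega_0)=0$, that is, $d(x,Z_n(\omega)x)\to\infty$ $\mathbb{P}$-almost surely.

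The main obstacle --- and the only step that is not completely soft --- is the passage from convergence of the isometries $Z_{n_k}(\omega)$ on $X$ to convergence of the pushforwards $Z_{n_k}(\omega)\nu$ on $\overline{X}$, i.e. the continuity of the $\iso(X)$-action on the compactification; this is precisely where Proposition~\ref{continuite proj} is invoked, to keep the visual neighbourhoods of the images $Z_{n_k}(\omega)\eta$ under control while the basepoint $Z_{n_k}(\omega)x$ varies inside the fixed compact ball $\overline{B}(x,R)$. Compactness of $\overline{X}$ and of bounded subsets of $\iso(X)$, non-atomicity of $\nu$, and uniqueness of weak-$\ast$ limits dispose of everything else.
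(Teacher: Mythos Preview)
Your argument is correct, and it is genuinely different from the paper's proof. The paper argues geometrically: it picks two points $z_1,z_2\in\supp(\nu)$ joined by a rank one geodesic $\sigma$, invokes Ballmann's Lemma~\ref{ballm} to get neighbourhoods $U,V$ of $\sigma(\pm\infty)$, and then uses Corollary~\ref{portmanteau} to force both $Z_{\phi(n)}(\omega)U$ and $Z_{\phi(n)}(\omega)V$ to meet an arbitrarily small visual neighbourhood of $z^{+}(\omega)$; since the geodesic $Z_{\phi(n)}(\omega)\sigma_{\xi,\eta}$ still passes near the bounded accumulation point $y$, the projections onto a small ball around $y'$ are simultaneously $2r$ apart and $2\varepsilon$ close, a contradiction. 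Your route bypasses Lemma~\ref{ballm} entirely: you extract, via Arzel\`a--Ascoli on the proper space $X$, a limiting isometry $g$ of a bounded subsequence of $Z_n(\omega)$, pass to the compactification to get $Z_{n_k}(\omega)\nu\to g_\ast\nu$, and conclude $g_\ast\nu=\delta_{z^{+}(\omega)}$, contradicting non-atomicity. This is softer and more conceptual; the price is that you must justify continuity of the $\iso(X)$-action on $\overline{X}$ (which, as you note, follows from Proposition~\ref{continuite proj}, or is standard from \cite[II.8]{bridson_haefliger99}), whereas the paper's argument stays entirely inside the visual-neighbourhood calculus. One small point: your sentence ``an axial element has positive translation length so $G$ fixes no point of $X$'' only rules out fixed points, not finite orbits; but the same observation applied to a suitable power of the rank one element immediately excludes finite orbits in $X$ as well, so this is easily repaired.
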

	
	\begin{proof}
		Let $z_1$ and $z_2$ be two distinct points of the support of $\nu$. By Remark \ref{support}, we can take $z_1$ and $z_2$ to be joined by a rank one geodesic $\sigma$. Let us suppose without loss of generality that $\sigma(- \infty) = z_1$ and $\sigmap= z_2$. Recall that by Remark \ref{flat strip}, there exists $R > 0 $ large enough so that $\sigma $ does not bound a flat strip of width $R$. Since $G$ acts by isometries on $X$, what we want to show does not depend on the basepoint and we can take $x~:=~\sigma(0)$. Let us assume by contradiction that $(Z_n(\omega)x)_n$ admits a bounded subsequence. 
		
		Because $X$ is proper, there exists $y \in X$ and a subsequence $(\phi(n))_n$ such that $Z_{\phi(n)}(\omega) x \rightarrow y \in X$. In particular, there exists $n_0$ such that for all $n \geq n_0$, $d(Z_{\phi(n)}(\omega) x , y) \leq 1$. 
		
		Due to Lemma \ref{portmanteau}, for every open neighbourhood $O$ of $z^{+}(\omega)$ and every $\varepsilon>0$, there exists $N \in \mathbb{N}$ such that for all $n~\geq~N$, $\nu(Z^{-1}_n(\omega) O) \geq 1 - \varepsilon$. 
		
		Now define $U$, $V$ to be the open neighbourhoods of $\sigmap$ and $\sigma(-\infty)$ respectively given by Lemma \ref{ballm}. Since $z_1 $ belongs to the support of $\nu$, $U$ has non-zero $\nu$-measure, thus in particular there exists $n_1\geq n_0$ such that for all $n \geq n_1$, $Z_n(\omega)U \cap O \neq \emptyset $. Repeating the same argument with $V$, there exists $n_2 \geq n_1$ so that for all $n \geq n_2$, $Z_n(\omega)U \cap O \neq \emptyset $ and $Z_n(\omega)V \cap O \neq \emptyset $. 
		
		Now fix $r, \varepsilon > 0 $, with $r > \varepsilon$, and let $R'= R'(r, R+1, \varepsilon) $ given by Proposition \ref{continuite proj}. Observe that the set $O := U(y, z^{+}(\omega), R', \varepsilon/3)$ is an open neighbourhood of $z^{+}(\omega)$, so by the previous argument, there exists $n_2 \in \mathbb{N}$ and $(\xi, \eta) \in U \times V$ such that $Z_{\phi(n_2)}(\omega)\xi$ and $Z_{\phi(n_2)}(\omega)\eta$ both belong to $O$. Now by Lemma~\ref{ballm}, there exists a geodesic line $\sigma_{\xi, \eta}$ in $X$ joining $\xi $ and $\eta $ such that $d(x, \sigma_{\xi, \eta})\leq R$. Let $x'$ be the projection of $x$ on $\sigma_{\xi, \eta}$, so that $d(x,x') \leq R$. 
		
		Then for all $n \in \mathbb{N}$, 
		\begin{eqnarray}
			d(Z_{\phi(n)}(\omega) x', \, y) &\leq& d( Z_{\phi(n)}(\omega) x' , Z_{\phi(n)}(\omega) x ) + d (Z_{\phi(n)}(\omega) x, y) \nonumber \\
			& \leq &  d(  x' , x ) + d (Z_{\phi(n)}(\omega) x, y) \nonumber \\
			& \leq &  R + d (Z_{\phi(n)}(\omega) x, y) \nonumber.
		\end{eqnarray}
		In particular, applying this equality for $n= n_2$ yields $d(Z_{\phi(n_2)}(\omega) x', \, y) \leq R+1$. From now on, denote $Z_{\phi(n_2)}(\omega) x'$ by $y'$, and by $p_{r}$ the closest point projection $p_{r} : \overline{X} \rightarrow \overline{B}(y', r)$.	Due to Proposition~\ref{continuite proj}, and because we have chosen $R'$ accordingly,
		\begin{equation*}
			U(y, z^{+}(\omega), R', \varepsilon/3) \subseteq U(y', z^{+}(\omega), r, \varepsilon).
		\end{equation*} 
		In particular, since we have defined $\xi $ and $\eta $ such that $Z_{\phi(n_2)}(\omega)\xi$ and $Z_{\phi(n_2)}(\omega)\eta$ belong to $U(y, z^{+}(\omega), R', \varepsilon/3)$, it implies that $Z_{\phi(n_2)}(\omega)\xi$ and $Z_{\phi(n_2)}(\omega)\eta$ both belong to $U(y', z^{+}(\omega), r, \varepsilon)$. However, there is a geodesic line from $Z_{\phi(n_2)}(\omega)\xi$ to $Z_{\phi(n_2)}(\omega)\eta$ passing through $y' = Z_{\phi(n_2)}(\omega) x'$, so that 
		\begin{equation*}
			d(p_{r}(Z_{\phi(n_2)}(\omega)\xi), p_{r}(Z_{\phi(n_2)}(\omega)\eta)) = 2r.
		\end{equation*}
		Now $\xi $ and $\eta$ both belong to $U(y', z^{+}(\omega), r, \varepsilon)$, which means that 
		\begin{equation*}
			d(p_{r}(Z_{\phi(n_2)}(\omega)\xi), p_{r}(z^{+}(\omega))) < \varepsilon,
		\end{equation*}
		and 
		\begin{equation*}
			d(p_{r}(Z_{\phi(n_2)}(\omega)\eta), p_{r}(z^{+}(\omega))) < \varepsilon. 
		\end{equation*}
		
		Now by the triangular inequality, 
		\begin{equation*}
			d(p_{r}(Z_{\phi(n_2)}(\omega)\xi), p_{r}(Z_{\phi(n_2)}(\omega)\eta)) < 2 \varepsilon,
		\end{equation*}
		a contradiction with the fact that $\epsilon < r$. See Figure \ref{lemme 4.4}.
	\end{proof}
	
	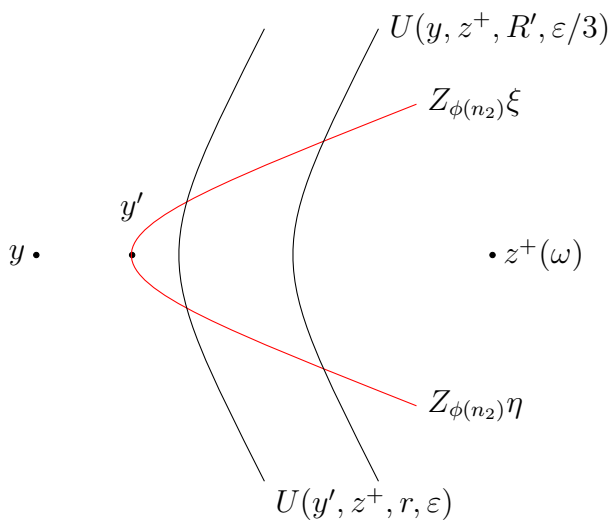
\begin{figure}[h]\label{lemme 4.4}
		\centering
		\begin{center}
			\begin{tikzpicture}[scale=1]
				\filldraw[black] (0,0) circle (1pt) node[left] {$y$} ; 
				\filldraw[black] (6,0) circle (1pt) node[right] {$z^{+}(\omega)$} ;
				\draw  (5,2) node[right] {$Z_{\phi(n_2)} \xi$} ;
				\draw  (5,-2) node[right] {$Z_{\phi(n_2)} \eta$} ;
				\filldraw[black] (1.26,0) circle (1pt) node[above= 3mm] {$y'$} ; 
				\draw[red] (5,2) .. controls (0,0) .. (5,-2);
				\draw (3,3) .. controls (1.5,0) .. (3,-3) node[below=3mm,right] {$U(y',z^{+}, r, \varepsilon)$};
				\draw (4.5,-3) .. controls (3,0) .. (4.5,3) node[above,right] {$U(y,z^{+}, R', \varepsilon/3)$};
			\end{tikzpicture}
		\end{center}
		\caption{$Z_{\phi(n_1)} \xi$ and $Z_{\phi(n_1)} \eta$ can not belong to $U(y',z^{+}, r, \varepsilon)$.}
	\end{figure}

	Now we can prove the convergence to the boundary. 
	
	\begin{thm}\label{convergence}
		Let $G$ be a discrete group and $G \curvearrowright X$ a non-elementary action by isometries on a proper $\cat (0)$ space $X$. Let $\mu \in \prob(G) $ be an admissible probability measure on $G$, and assume that $G $ contains a rank one element. Then for every $x \in X$, and for $\mathbb{P}$-almost every $\omega \in \Omega$, the random walk $(Z_n (\omega) x)_n $ converges to a boundary point $z^{+}(\omega) \in \bd X$. Moreover, $z^{+}(\omega)$ is distributed according to $\nu $. 
	\end{thm}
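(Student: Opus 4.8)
The plan is to prove that, for $\mathbb P$-almost every $\omega$, every accumulation point of the sequence $(Z_n(\omega)x)_n$ in the compact space $\overline X$ equals $z^{+}(\omega)$; since $\overline X$ is compact and metrizable, this forces the whole sequence to converge to $z^{+}(\omega)$. Because convergence in the visual topology does not depend on the basepoint, it suffices to treat a single $x\in X$, and then deduce the general statement.

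First I would invoke Lemma \ref{unbdd}: $\mathbb P$-almost surely $d(x,Z_n(\omega)x)\to\infty$, and hence $d(x,Z_n^{-1}(\omega)x)=d(Z_n(\omega)x,x)\to\infty$ as well, since $G$ acts by isometries. This is where the rank one geometry really enters --- through Ballmann's Lemma \ref{ballm} and the projection continuity of Proposition \ref{continuite proj} --- and I regard it as the substantive geometric input; the remaining steps are a soft measure-theoretic assembly. The consequence I need from it is that every accumulation point of $(Z_n(\omega)x)_n$, and every accumulation point of $(Z_n^{-1}(\omega)x)_n$, lies in $\bd X$.

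Next, fix $\omega$ in the full-measure set on which $d(x,Z_n(\omega)x)\to\infty$, on which $z^{+}(\omega)\in\bd X$ (Remark \ref{subsequence rem}), and on which $Z_n(\omega)\nu\to\delta_{z^{+}(\omega)}$ in the weak-$\ast$ topology (this is exactly what is proved in Theorem \ref{unicite}, via Theorem \ref{furstenberg73}). Let $z'\in\bd X$ be an arbitrary accumulation point, say $Z_{\psi(n)}(\omega)x\to z'$. Since $d(x,Z_{\psi(n)}^{-1}(\omega)x)\to\infty$ too, I pass to a further subsequence --- still written $\psi$ --- along which also $Z_{\psi(n)}^{-1}(\omega)x\to z^{-}\in\bd X$. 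Applying Theorem \ref{PS} to $g_n:=Z_{\psi(n)}(\omega)$ with $\theta=\pi$ (so $B_T(z^{-},\theta)=B_T(z^{-},\pi)$ and $B_T(z',\pi-\theta)=\{z'\}$), and taking the compact set $K$ to be a single point $\eta$, I obtain $Z_{\psi(n)}(\omega)\eta\to z'$ for every $\eta\in\bd X\setminus B_T(z^{-},\pi)$. By Proposition \ref{zero measure}, $\nu(B_T(z^{-},\pi))=0$, and $\nu$ is carried by $\bd X$ because $\nu=\int_\Omega\delta_{z^{+}(\omega')}\,d\mathbb P(\omega')$ with $z^{+}(\omega')\in\bd X$; hence $Z_{\psi(n)}(\omega)\eta\to z'$ for $\nu$-almost every $\eta$, and dominated convergence yields $Z_{\psi(n)}(\omega)\nu\to\delta_{z'}$ weakly-$\ast$. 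On the other hand $Z_n(\omega)\nu\to\delta_{z^{+}(\omega)}$ along the full sequence, hence along $\psi$, so uniqueness of weak-$\ast$ limits forces $z'=z^{+}(\omega)$. As $z'$ was an arbitrary accumulation point, $Z_n(\omega)x\to z^{+}(\omega)$; and the law of $z^{+}$ is $\nu$ because $\mathbb P(z^{+}\in A)=\int_\Omega\delta_{z^{+}(\omega)}(A)\,d\mathbb P(\omega)=\nu(A)$ for every Borel $A\subseteq\overline X$.

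As for the main obstacle: essentially all of the difficulty has been front-loaded into Lemma \ref{unbdd} (the escape of the walk and of its inverse to the boundary, which is the genuinely $\cat(0)$-geometric statement) and Proposition \ref{zero measure} (the vanishing of $\nu$ on Tits $\pi$-balls). Given those, the only point in the present argument requiring any care is the double subsequence extraction, ensuring that along one and the same subsequence both $Z_{\psi(n)}(\omega)x$ and $Z_{\psi(n)}^{-1}(\omega)x$ converge to boundary points --- which is precisely the hypothesis needed to invoke the Papasoglu--Swenson $\pi$-convergence property and thereby pin down the accumulation point.
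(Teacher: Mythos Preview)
Your proposal is correct and follows essentially the same route as the paper: use Lemma \ref{unbdd} to force all accumulation points into $\bd X$, then for any subsequential limit $z'$ pass to a further subsequence along which $Z_{\psi(n)}^{-1}(\omega)x$ also converges, invoke the Papasoglu--Swenson $\pi$-convergence (Theorem \ref{PS}) together with Proposition \ref{zero measure} to obtain $Z_{\psi(n)}(\omega)\nu\to\delta_{z'}$, and conclude $z'=z^{+}(\omega)$ by uniqueness of the weak-$\ast$ limit from Theorem \ref{furstenberg73}. Your write-up is in fact more explicit than the paper's about the double subsequence extraction and the dominated-convergence step.
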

	
	\begin{proof}
		Let $x \in X$. Because of Lemma \ref{unbdd} the random walk $(Z_n (\omega) x)_n$ goes to infinity, so it is enough to show that there is no accumulation point of $(Z_n (\omega) x)_n $ in $\bd X$ other than the boundary point $z^{+} (\omega)$ given by Proposition \ref{subsequence}. Assume that for a given subsequence, $Z_{\phi(n)}(\omega) x \rightarrow \xi$, with $\xi \in \bd X$. Then we can apply the results in the first section and the Theorem \ref{PS} to get that $Z_{\phi(n)}(\omega) \nu \rightarrow \delta_\xi$ in the weak-$\ast$ topology. By Theorem \ref{furstenberg73}, we have $Z_n (\omega) \nu \rightarrow \delta_{z^{+}(\omega)}$, so $z^{+}(\omega) = \xi$ by uniqueness. 
	\end{proof}
	
	Now, Proposition \ref{zero measure} combined with Theorem \ref{tits} allows to state a geometric result which can be of independent interest.
	
	\begin{cor}\label{limit points are rank one}
		Let $\xi \in \bd X$ be a limit of the random walk $(Z_n x)_n$. Then for $\nu$-almost every point $\eta \in \bd X$, there exists a rank one geodesic joining $\xi $ to $\eta$. 
	\end{cor}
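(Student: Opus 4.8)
The plan is to read the statement off directly from Proposition~\ref{zero measure} and the dictionary between the Tits metric and the existence of geodesics in $X$ recorded in Theorem~\ref{tits}. The hypothesis that $\xi$ is a limit of the random walk guarantees $\xi\in\supp(\nu)$ by Theorem~\ref{convergence}, but in fact this plays no role: Proposition~\ref{zero measure} gives $\nu(B_T(\zeta,\pi))=0$ for \emph{every} boundary point $\zeta$, and in particular $\nu(B_T(\xi,\pi))=0$. Consequently the set
\begin{equation*}
	A_\xi := \bd X \setminus B_T(\xi,\pi) = \{\eta\in\bd X : d_T(\xi,\eta)>\pi\}
\end{equation*}
has full $\nu$-measure, and it suffices to check that every $\eta\in A_\xi$ is joined to $\xi$ by a rank one geodesic.

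So I would fix $\eta\in A_\xi$, that is $d_T(\xi,\eta)>\pi$, and first produce a geodesic. By the contrapositive of Theorem~\ref{tits}\,(1): if no geodesic of $X$ joined $\xi$ to $\eta$, we would have $d_T(\xi,\eta)=\angle(\xi,\eta)\le\pi$, contradicting $d_T(\xi,\eta)>\pi$; hence there is a geodesic line $\sigma$ in $X$ from $\xi$ to $\eta$. Then I would apply Theorem~\ref{tits}\,(3) to this $\sigma$: the existence of a geodesic joining $\xi$ and $\eta$ forces $d_T(\xi,\eta)\ge\pi$, with equality if and only if $\sigma$ bounds a flat half-plane. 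Since $d_T(\xi,\eta)>\pi$ strictly, $\sigma$ cannot bound a flat half-plane, i.e. $\sigma$ is rank one by definition. As $\eta\in A_\xi$ was arbitrary and $\nu(A_\xi)=1$, this gives a rank one geodesic from $\xi$ to $\eta$ for $\nu$-almost every $\eta\in\bd X$.

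There is no genuine obstacle here beyond correctly matching the cases of Theorem~\ref{tits}; the one point to be careful about is that the conclusion only asks for the existence of \emph{some} rank one geodesic, so part~(3) of Theorem~\ref{tits} — which applies to any geodesic joining the two points — is exactly the right tool, and one should not reach for the uniqueness statement in part~(2), which governs the opposite regime $\angle(\xi,\eta)<\pi$ where no geodesic of $X$ exists.
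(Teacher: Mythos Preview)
Your proposal is correct and is exactly the argument the paper has in mind: the corollary is stated immediately after the sentence ``Proposition~\ref{zero measure} combined with Theorem~\ref{tits} allows to state a geometric result\ldots'' with no further proof, and you have spelled out precisely that combination. Your side remark that the hypothesis ``$\xi$ is a limit of the random walk'' is not actually used---Proposition~\ref{zero measure} applies to every $\xi\in\bd X$---is also accurate.
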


	\section{Positivity of the drift}\label{drift section}
	
	\subsection{Proof of Theorem \ref{drift thm}}
	
	Now that we know that the random walk converges to the boundary, we can wonder "at which speed" it converges. The goal of this section is to show that this speed is linear. The strategy is classical: it was initiated by Guivarc'h and Raugi for random walks on Lie groups \cite{guivarch_raugi85} and later it was used later for the study of free groups by Ledrappier \cite{ledrappier01}. This type of results can be understood as a generalised version of a Law of Large Numbers for a given random walk in some metric space. These questions have been extensively studied by Benoist and Quint in \cite{benoist_quint}, who have also proven a Central Limit Theorem for random walks on Gromov-hyperbolic groups, see \cite{benoist_quint16}.
	
	Let $G$ be a discrete group and $G \curvearrowright X$ a non-elementary action by isometries on a proper $\cat (0)$ space $X$. Let $\mu \in \prob(G) $ be an admissible probability measure on $G$. As a consequence of Kingman subadditive Theorem (see for example \cite[Corollary 4.3]{karlsson_margulis}), there is a constant $\lambda$ such that for $\mathbb{P}$-almost every sample path $(Z_n(\omega)x)_n$ we have 
	\begin{equation}
		\lim_{n \rightarrow \infty} \frac{1}{n} d(Z_n(\omega) x, x) = \lambda.
	\end{equation}
	
	The aforementioned constant $\lambda$ is referred to as the \textit{drift} of the random walk. We prove that if we assume that the probability measure $\mu $ has finite first moment, i.e. $\sum_G \mu(g) d(gx, x) < \infty$, the drift can be written by $ \lim_{n \rightarrow \infty} \frac{1}{n} d(Z_n(\omega) x, x) = \lambda $ and is positive. More precisely, we establish the following result: 
	
	\begin{thm}\label{drift}
		Let $G$ be a discrete group and $G \curvearrowright X$ a non-elementary action by isometries on a proper $\cat (0)$ space $X$. Let $\mu \in \prob(G) $ be an admissible probability measure on $G$ with finite first moment, and assume that $G $ contains a rank one element. Let $x \in X$ be a basepoint of the random walk. Then the drift $\lambda$ is almost surely positive: 
		\begin{equation}
			\lim_{n \rightarrow \infty} \frac{1}{n} d(Z_n(\omega) x, x) = \lambda > 0. 
		\end{equation}
	\end{thm}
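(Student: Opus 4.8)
The plan is to run the Guivarc'h--Raugi strategy: produce an integrable, additive (cocycle-type) real functional of the walk whose Birkhoff average is bounded above by $\lambda$ and whose vanishing is incompatible with the rank one dynamics. I would pass to the two-sided model $(G^{\mathbb Z},\mu^{\otimes\mathbb Z})$ with the (ergodic) two-sided shift $T$. There the forward walk $Z_n=\omega_1\cdots\omega_n$ still converges a.s. to $z^{+}(\omega)\in\bd X$ with law $\nu$ (Theorem \ref{convergence}), while the backward walk $\check Z_n=\omega_0^{-1}\omega_{-1}^{-1}\cdots\omega_{-n+1}^{-1}$ is a right random walk with step law $\check\mu$, $\check\mu(g)=\mu(g^{-1})$, which is again admissible, so $\check Z_n x$ converges a.s. to a point $z^{-}(\omega)$ whose law $\check\nu$ is the $\check\mu$-stationary measure. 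In this model $z^{+}$ and $z^{-}$ are functions of disjoint blocks of coordinates, hence independent, so $(z^{+},z^{-})$ has law $\nu\otimes\check\nu$. Since Proposition \ref{zero measure} applies verbatim to $\check\mu$, we get $\check\nu\big(B_T(\xi,\pi)\big)=0$ for every $\xi$, so a.s. $d_T(z^{-}(\omega),z^{+}(\omega))>\pi$; by Theorem \ref{tits} this forces a geodesic line from $z^{-}(\omega)$ to $z^{+}(\omega)$ that does not bound a flat half-plane, i.e.\ a rank one geodesic (cf.\ Corollary \ref{limit points are rank one}).

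Next I would fix a $G$-equivariant measurable choice of such a geodesic $\gamma_\omega$: the set of geodesics from $z^{-}(\omega)$ to $z^{+}(\omega)$ is a flat strip of finite width (a geodesic bounding arbitrarily wide flat strips would, by properness, bound a flat half-plane), so its base is compact and one can take $\gamma_\omega$ through its circumcenter. The equivariances $z^{\pm}(T\omega)=\omega_1^{-1}z^{\pm}(\omega)$ then give $\gamma_{T\omega}=\omega_1^{-1}\gamma_\omega$ up to reparametrisation. Parametrise $\gamma_\omega$ by arc length towards $z^{+}(\omega)$ with $\gamma_\omega(0)=p_{\gamma_\omega}(x)$, and let $t_n(\omega)$ be the parameter of $p_{\gamma_\omega}(Z_n(\omega)x)$. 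From $Z_n(\omega)=\omega_1 Z_{n-1}(T\omega)$ and equivariance of nearest-point projections one computes $t_n(\omega)=t_{n-1}(T\omega)+c(\omega)$ with $c:=t_1$, hence $t_n=\sum_{k=0}^{n-1}c\circ T^k$ since $t_0\equiv0$. As projections are $1$-Lipschitz, $|c(\omega)|\le d(x,\omega_1 x)$, which is integrable by the finite first moment hypothesis, so Birkhoff's theorem gives $\frac1n t_n\to\bar c:=\mathbb E[c]$ a.s.; and $|t_n|\le d(x,Z_n x)$ together with Kingman's theorem yields $|\bar c|\le\lambda$. It therefore suffices to prove $\bar c\neq0$.

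For this I argue by contradiction. Because $\gamma_\omega$ is rank one (contracting), $Z_n(\omega)x$ converges to its endpoint $z^{+}(\omega)=\gamma_\omega(+\infty)$ and $d(x,Z_nx)\to\infty$ (Lemma \ref{unbdd}), the projections $p_{\gamma_\omega}(Z_nx)$ escape to the positive end of $\gamma_\omega$, i.e.\ $t_n(\omega)\to+\infty$ a.s.; this is where the rank one hypothesis is genuinely used, via the contraction property (and Ballmann's Lemma \ref{ballm}) to prevent the projected points from remaining in a bounded part of $\gamma_\omega$. On the other hand, if $\bar c=0$ then $t_n=\sum_{k=0}^{n-1}c\circ T^k$ is the ergodic sum of an integrable function of mean zero, so Atkinson's recurrence theorem gives $\liminf_n|t_n(\omega)|=0$ a.s., contradicting $t_n\to+\infty$. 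Hence $\bar c\neq0$ and $\lambda\ge|\bar c|>0$.

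The main obstacle is exactly the step $t_n\to+\infty$: without a contracting geodesic the projected parameters could oscillate, and one cannot substitute the Karlsson--Margulis geodesic tracking result here, since that already presupposes $\lambda>0$ --- so it has to be derived directly from the behaviour of nearest-point projections onto a non-flat geodesic line together with the visual convergence $Z_nx\to z^{+}(\omega)$. The secondary technical points are the $G$-equivariant measurable selection of $\gamma_\omega$ (via circumcenters of the strip of parallel geodesics) and checking that Proposition \ref{zero measure} transfers to $\check\mu$, so that $z^{-}$ and $z^{+}$ are almost surely joined by a rank one geodesic.
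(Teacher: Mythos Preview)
Your proposal is essentially correct and follows the same Guivarc'h--Raugi philosophy as the paper, but it implements the cocycle differently and this difference is worth recording.

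The paper works on the skew product $(\Omega\times\overline{X},\,\mathbb{P}\times\check\nu)$ with $T(\omega,\xi)=(S\omega,\omega_0^{-1}\xi)$ and uses the \emph{horofunction} cocycle $H(\omega,\xi)=h_\xi(\omega_0 x)$. Ergodicity of $T$ is deduced from uniqueness of the $\check\mu$-stationary measure (Proposition~\ref{ergodic}). The key geometric input is the estimate $|h_\xi(Z_n x)-d(Z_n x,x)|<C$ for $\nu$-almost every $\xi$ (obtained from Lemma~\ref{ballm}), which immediately gives both $\lambda=\lim\frac1n h_\xi(Z_n x)$ and $h_\xi(Z_n x)\to+\infty$; the Guivarc'h--Raugi transience lemma then yields $\int H>0$, hence $\lambda>0$.

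Your route replaces horofunctions by the signed \emph{projection parameter} $t_n$ onto a measurably chosen rank one geodesic $\gamma_\omega$ joining $z^-(\omega)$ to $z^+(\omega)$, and works on the two-sided shift (whose ergodicity is immediate). The cocycle identity $t_n=\sum_{k<n}c\circ T^k$ and the integrability of $c$ are correct, and Atkinson's theorem plays the role of the Guivarc'h--Raugi lemma. What you gain is that ergodicity comes for free and the construction is very geometric; what you lose is twofold. First, you only obtain $\lambda\ge\bar c>0$, not the identification $\lambda=\int H$ that the paper gets from Corollary~\ref{sublin approx}. Second, and more importantly, the step $t_n\to+\infty$ is not immediate: unlike $h_\xi(Z_n x)$, the projection parameter is not a priori within bounded distance of $d(Z_n x,x)$, so you genuinely need the contracting property of $\gamma_\omega$ here. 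You flag this honestly, and it can be completed (for a $C$-contracting geodesic, if $y_n\to\gamma(+\infty)$ visually and the projections stayed bounded by $M$, then the point on $[\gamma(0),y_n]$ at distance $M+3C$ from $\gamma(0)$ lies in the open ball $B(y_n,d(y_n,\gamma))$, which is disjoint from $\gamma$, yet its $\gamma$-projection is near $\gamma(M+3C)$, contradicting the $C$-contracting bound), but as written it is a gap. The measurable equivariant selection of $\gamma_\omega$ via the circumcenter of the parallel set is fine.
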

	
	From now on, we denote by $\nu$ the unique $\mu$-stationary measure on $X$ given by Theorem \ref{unicite}.

	As in \cite[Theorem 9.3]{fernos_lecureux_matheus18}, we begin by showing that the displacement $d(Z_n(\omega) x, x)$ is well approximated by the horofunctions $h_\xi (Z_n(\omega)x)$. For the remaining of the section, we keep the notations introduced by Theorem \ref{drift}
	
	\begin{prop}
		Let $x \in X$ be a basepoint. Then for $\nu$-almost every $\xi  \in \partial X$, and $\mathbb{P}$-almost every $\omega \in \Omega$, there exists $C > 0 $ such that for all $n \geq 0$ we have 
		\begin{equation}
			|h_\xi (Z_n(\omega)x) - d(Z_n(\omega) x, x)| < C. 
		\end{equation}
	\end{prop}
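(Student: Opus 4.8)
The plan is to establish the two inequalities separately, since the upper bound is essentially free. Writing $h_\xi(z)=\lim_m\big(d(x_m,z)-d(x_m,x)\big)$ for any sequence $x_m\to\xi$, the triangle inequality $d(x_m,Z_n(\omega)x)\le d(x_m,x)+d(x,Z_n(\omega)x)$ immediately gives $h_\xi(Z_n(\omega)x)\le d(Z_n(\omega)x,x)$ for every $n\ge 0$ and every $\xi$ and $\omega$. So the whole content is to produce, for almost every pair $(\xi,\omega)$, a constant $C=C(\xi,\omega)$ with $h_\xi(Z_n(\omega)x)\ge d(Z_n(\omega)x,x)-C$ for all $n$. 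Geometrically this says that the geodesic from $x$ to $Z_n(\omega)x$ ``points towards $\xi$'' only for a bounded amount of time, uniformly in $n$ --- which should be forced by the fact that $Z_n(\omega)x$ converges to a point $z^{+}(\omega)$ that is opposite to $\xi$.

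Next I would bring in the two inputs that make this work. Fix $\omega$ in the full-measure set on which $Z_n(\omega)x\to z^{+}(\omega)\in\bd X$ (Theorem~\ref{convergence}); by Corollary~\ref{limit points are rank one}, for $\nu$-almost every $\xi\in\bd X$ there is a rank one geodesic line $c$ with $c(-\infty)=\xi$ and $c(+\infty)=z^{+}(\omega)$ (in particular $\xi\neq z^{+}(\omega)$, which also follows from Lemma~\ref{non atomic}). Fix such a $\xi$ and such a line $c$. By Remark~\ref{flat strip}, $c$ does not bound a flat half-strip of some width $R>0$, so Lemma~\ref{ballm} provides neighbourhoods $U$ of $\xi$ and $V$ of $z^{+}(\omega)$ in $\overline{X}$ such that any $\xi'\in U$ and $\eta'\in V$ are joined by a geodesic staying within distance $R$ of $c(0)$. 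Set $C_1:=R+d(x,c(0))$, so that every such geodesic passes within $C_1$ of $x$.

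Then comes the distance estimate. Since $Z_n(\omega)x\to z^{+}(\omega)\in V$, there is $N$ with $w_n:=Z_n(\omega)x\in V$ for all $n\ge N$; for such $n$, Lemma~\ref{ballm} gives a geodesic ray $\rho_n\colon[0,\infty)\to X$ with $\rho_n(0)=w_n$, $\rho_n(\infty)=\xi$ and $\inf_t d(x,\rho_n(t))\le C_1$. Let $x_n'=\rho_n(s_n)$ be the point of $\rho_n$ nearest to $x$, so that $d(x,x_n')\le C_1$ and $s_n=d(w_n,x_n')\ge d(w_n,x)-C_1$. For $m>s_n$ the three points $w_n,x_n',\rho_n(m)$ lie on $\rho_n$ in this order, whence $d(w_n,\rho_n(m))=s_n+(m-s_n)$ while $d(x,\rho_n(m))\le d(x,x_n')+(m-s_n)\le C_1+(m-s_n)$; since $\rho_n(m)\to\xi$, subtracting and letting $m\to\infty$ yields $h_\xi(w_n)\ge s_n-C_1\ge d(w_n,x)-2C_1$. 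Combined with the free upper bound this gives $|h_\xi(Z_n(\omega)x)-d(Z_n(\omega)x,x)|\le 2C_1$ for all $n\ge N$, and enlarging the constant to absorb the finitely many terms with $n<N$ (for each of which the quantity is simply a finite number) completes the argument; a Fubini argument then lets one exchange the order of the two ``almost every'' quantifiers so as to match the statement.

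The main obstacle is less the geometry than the bookkeeping of constants: the rank one geodesic $c$, hence the width $R$ and the constant $C_1$, depend on both $\xi$ and $\omega$, so one must settle for a constant that is only defined pointwise almost everywhere --- which is exactly what is asked --- and one uses Corollary~\ref{limit points are rank one} precisely in the form ``for a.e.\ limit direction $z^{+}(\omega)$, $\nu$-a.e.\ $\xi$ is joined to it by a rank one geodesic''. The geometric heart is Lemma~\ref{ballm}, which plays the role hyperbolicity would play in the classical arguments: it guarantees that geodesics from a small neighbourhood of $\xi$ to a small neighbourhood of $z^{+}(\omega)$ all pass through a bounded region around $c(0)$, and it is this that keeps the Gromov product of $Z_n(\omega)x$ and $\xi$ based at $x$ bounded, equivalently that keeps $h_\xi(Z_n(\omega)x)$ within a bounded distance of $d(Z_n(\omega)x,x)$.
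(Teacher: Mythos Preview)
Your proof is correct and follows essentially the same route as the paper: both use the convergence $Z_n(\omega)x\to z^{+}(\omega)$, the existence of a rank one geodesic from $\xi$ to $z^{+}(\omega)$ for $\nu$-almost every $\xi$, and Ballmann's Lemma~\ref{ballm} to force geodesics from (a neighbourhood of) $\xi$ to $Z_n(\omega)x$ to pass through a bounded region, yielding the key lower bound $h_\xi(Z_n(\omega)x)\ge d(Z_n(\omega)x,x)-2C_1$. The only cosmetic differences are that the paper first sets $x=\sigma_\xi(0)$ and handles the basepoint change at the end (whereas you absorb it directly into $C_1=R+d(x,c(0))$), and that the paper approximates $\xi$ by a sequence $y_p\in U$ and works with geodesic \emph{segments} from $y_p$ to $Z_n(\omega)x$ before passing to the limit, while you apply Lemma~\ref{ballm} directly to the geodesic ray from $Z_n(\omega)x$ to $\xi$; both are legitimate readings of the lemma and lead to the same estimate.
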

	
	\begin{proof}
		Because of Proposition \ref{zero measure}, for $\nu$-almost every $\xi \in \bd X$, $d_T(\xi, z^{+}(\omega)) > \pi$. With Theorem \ref{tits}, this implies that for $\nu$-almost every $\xi \in \partial X$, there is a rank one geodesic $\sigma_\xi$ in $X$ joining $\xi $ to $z^{+}(\omega)$. Let $\xi \in \partial X$ such that $d_T(\xi, z^{+}(\omega)) > \pi$, and fix $R>0 $ such that $\sigma_\xi $ does not bound a flat strip of width $R$.  By Lemma \ref{ballm}, there exist neighbourhoods $U$, $V$ of  $\xi $ and $ z^{+}(\omega)$ respectively in $\overline{X}$ such that for all $\xi' \in U$ and $\eta \in V$, there is a geodesic from $\xi' $ to $\eta$, and for any such geodesic $\sigma'$, we have $d(\sigma_\xi(0), \sigma') < R$.  Assume first that $x = \sigma_\xi(0)$. Since $Z_n(\omega)x \rightarrow z^{+}(\omega)$ almost surely, there exists $n_0 $ such that for all $n \geq n_0$, $Z_n(\omega)x \in V$. We are going to show that for all $n \geq n_0 $, $|h^x_\xi (Z_n(\omega)x) - d(Z_n(\omega) x, x)| \leq 2R$. 
		
		Take $(y_p)_p$ a sequence in $X$ converging to $\xi $. There exists $p_0$ such that for all $p \geq p_0$, $y_p \in U$. Fix $n \geq n_0$ and $p \geq p_0$, and define $x' = x'(n,p)$ as the projection of $x$ on the geodesic segment joining $y_{p}$ to $Z_{n}(\omega)x$. By Lemma \ref{ballm}, $d(x', x) \leq R$, hence
		\begin{eqnarray}
			d(y_p, Z_n(\omega)x) & = & d(y_p, x') + d(x', Z_n (\omega)x) \nonumber \\
			& \geq & d(y_p, x) - R + d(x, Z_n (\omega)x) - R \nonumber.
		\end{eqnarray}
		Then for all $p \geq p_0$, $n \geq n_0$, $d(y_p, Z_n(\omega)x) -d(y_p, x) \geq d(x, Z_n (\omega)x) -2R$. If we make $p \rightarrow \infty$, we get that for all $n \geq n_0$,
		\begin{equation*}
			h^x_\xi (Z_n(\omega)x) + 2R \geq d(x, Z_n(\omega)x).
		\end{equation*}
	
		Conversely, for all $n \in \mathbb{N}$, $d(x, Z_n(\omega)x) \geq d(y_p, Z_n(\omega)x) - d(x, y_p)$ hence $d(x, Z_n(\omega)x) \geq h^x_\xi (Z_n(\omega)x) $ by taking the limit. Then there exists $C >0 $ such that for all $n \in \mathbb{N}$, 
		\begin{equation}
			|h^x_\xi (Z_n(\omega)x) - d(Z_n(\omega) x, x)| \leq C \label{bdd}
		\end{equation}
		
		Now if we take a different basepoint $z \in X$,
		\begin{eqnarray}
			d(Z_n(\omega)z, z ) &\leq &d(Z_n(\omega)z, Z_n(\omega)x) + d(Z_n(\omega)x, x ) + d(x, z ) \nonumber \\
			& \leq & d(Z_n(\omega)x, x ) + 2 d(x, z ),  \nonumber
		\end{eqnarray}
		hence $|d(Z_n(\omega)z, z ) - d(Z_n(\omega)x, x )| \leq 2 d(x, z )$.
		Similarly, $|h^x(Z_n(\omega)z) - h^x_\xi(Z_n(\omega)x)| \leq 2 d(x, z )$ and if we change the basepoint, $|h^x_\xi - h^z_\xi | \leq d(z, x)$, so equation \eqref{bdd} does not depend on the choice of the basepoint. 
	\end{proof}
	
	As a consequence, we have the following corollary: 
	
	\begin{cor}\label{sublin approx}
		For every $x \in X$, $\mathbb{P}$-almost surely every $\omega \in \Omega $ and $\nu$-almost every $\xi \in \partial X$, we have that 
		\begin{equation*}
			\lambda = \lim_{n \rightarrow \infty} \frac{1}{n} h_\xi (Z_n(\omega) x).
		\end{equation*}
	\end{cor}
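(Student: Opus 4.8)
The plan is to read this off directly from the displayed estimate in the Proposition just proved, combined with Kingman's subadditive ergodic theorem. Recall that the subadditive ergodic theorem (quoted at the start of this section) gives, for every basepoint $x \in X$ and $\mathbb{P}$-almost every $\omega \in \Omega$,
\[
\frac{1}{n}\, d(Z_n(\omega)x, x) \longrightarrow \lambda \quad \text{as } n \to \infty .
\]
Fix $x \in X$ and let $\Omega_0 \subseteq \Omega$ be the full-measure set on which this limit holds. For $\omega \in \Omega_0$, the preceding Proposition provides, for $\nu$-almost every $\xi \in \bd X$, a constant $C = C(x,\omega,\xi) > 0$ such that $|h_\xi(Z_n(\omega)x) - d(Z_n(\omega)x,x)| < C$ for all $n \geq 0$.

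Dividing this bound by $n$ yields
\[
\left| \frac{1}{n}\, h_\xi(Z_n(\omega)x) - \frac{1}{n}\, d(Z_n(\omega)x, x) \right| < \frac{C}{n},
\]
and the right-hand side tends to $0$. Hence $\frac1n h_\xi(Z_n(\omega)x)$ converges, with the same limit as $\frac1n d(Z_n(\omega)x,x)$, namely $\lambda$. This gives the claim for every $x$, $\mathbb{P}$-almost every $\omega$, and $\nu$-almost every $\xi$.

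There is essentially no obstacle here: all the geometric content — that $\nu$-almost every $\xi$ is joined to $z^{+}(\omega)$ by a rank one geodesic (Proposition \ref{zero measure} together with Theorem \ref{tits}), and that along such a geodesic the horofunction tracks the distance up to an additive constant (Lemma \ref{ballm}) — has already been absorbed into the previous Proposition. The only mild bookkeeping point is that the additive constant $C$ depends on $\omega$ and $\xi$; but since the bound holds for all $n$ simultaneously with this single constant, dividing by $n$ kills it, so no uniformity in $\omega$ or $\xi$ is needed, and intersecting with $\Omega_0$ (which has full $\mathbb{P}$-measure for the fixed $x$) is all that is required.
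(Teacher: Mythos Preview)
Your proof is correct and matches the paper's intended approach: the paper states this corollary immediately after the Proposition with the phrase ``As a consequence, we have the following corollary'' and gives no further argument, so the content is precisely the two-line deduction you wrote out---divide the additive bound by $n$ and invoke the subadditive ergodic limit $\frac{1}{n}d(Z_n(\omega)x,x)\to\lambda$. Your remark that $C$ may depend on $(\omega,\xi)$ but that this is harmless since the bound is uniform in $n$ is exactly the right observation.
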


	The rest of the proof is now very similar to what is done in \cite[Theorem 9.3]{fernos_lecureux_matheus18}, which was itself inspired by \cite{benoist_quint}. We include it for completeness. The idea is to apply results about additive cocycles. 
	
	Define  $\check{\mu} $ by $\check{\mu}(g)=  \mu(g^{-1})$. It is still an admissible measure for $G$, so we can apply Theorem \ref{unicite} to find $\nui$ the unique $\mui$-stationary measure on $\overline{X}$. Define $T : (\Omega \times \overline{X}, \mathbb{P} \times \nui ) \rightarrow (\Omega \times \overline{X}, \mathbb{P} \times \nui )$ be defined by $T(\omega, \xi ) \mapsto (S\omega, \omega_0^{-1} \xi)$. 
	The following Proposition is proved in \cite{benoist_quint}. Its key ingredient is the fact that $\nui$ is the unique $\mui$-stationary measure.

	\begin{prop}\label{ergodic}
		The transformation $T$ preserves the measure $\mathbb{P} \times \nui$ and acts ergodically. 
	\end{prop}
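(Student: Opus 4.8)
The plan is to prove the two claims separately; invariance of $\mathbb{P}\times\nui$ is a direct computation using only the $\mui$-stationarity of $\nui$, while ergodicity is obtained by recognising $T$ as the standard random-walk skew product attached to the measure $\mui$ and then invoking Theorem~\ref{unicite} applied to $\mui$. For the invariance, take a bounded Borel function $F$ on $\Omega\times\overline{X}$ and write $\omega_0$ for the first coordinate of $\omega$, which has law $\mu$ and is independent of $S\omega$, itself distributed according to $\mathbb{P}$; then
\begin{align*}
\int F\circ T\,d(\mathbb{P}\times\nui)
&=\int_\Omega\int_{\overline{X}} F\bigl(S\omega,\omega_0^{-1}\xi\bigr)\,d\nui(\xi)\,d\mathbb{P}(\omega)\\
&=\sum_{g\in G}\mu(g)\int_\Omega\int_{\overline{X}} F\bigl(\omega',g^{-1}\xi\bigr)\,d\nui(\xi)\,d\mathbb{P}(\omega')\\
&=\int_\Omega\int_{\overline{X}} F(\omega',\eta)\,d\Bigl(\textstyle\sum_{g\in G}\mu(g)\,(g^{-1})_{*}\nui\Bigr)(\eta)\,d\mathbb{P}(\omega').
\end{align*}
Since $\sum_{g}\mu(g)\,(g^{-1})_{*}\nui=\sum_{k}\mui(k)\,k_{*}\nui=\mui*\nui=\nui$, the last line equals $\int F\,d(\mathbb{P}\times\nui)$, and so $T$ preserves $\mathbb{P}\times\nui$.

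For ergodicity, observe that the substitution $\omega_i\mapsto\omega_i^{-1}$ carries the $\mu$-Bernoulli measure $\mathbb{P}$ to the $\mui$-Bernoulli measure and turns $T$ into $(\omega,\xi)\mapsto(S\omega,\omega_0\xi)$; hence $(\Omega\times\overline{X},\mathbb{P}\times\nui,T)$ is exactly the classical skew product associated with $\mui$ and the $\mui$-stationary measure $\nui$ on the compact $G$-space $\overline{X}$. By the standard ergodicity criterion for such skew products (Furstenberg; see \cite{furstenberg73} and \cite{benoist_quint}), $T$ is ergodic if and only if $\nui$ is an extreme point of the weak-$*$ compact convex set of $\mui$-stationary probability measures on $\overline{X}$. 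Since $\mui$ is again admissible, Theorem~\ref{unicite} applies with $\mu$ replaced by $\mui$ and shows that this set is the single point $\{\nui\}$; a one-point convex set has exactly one extreme point, so $\nui$ is extreme and $T$ is ergodic.

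For orientation, the mechanism behind the criterion runs as follows. Given $\varphi\in L^\infty(\mathbb{P}\times\nui)$ with $\varphi\circ T=\varphi$, iterating $T$ gives $\varphi(\omega,\xi)=\varphi\bigl(S^n\omega,W_n^{-1}\xi\bigr)$ with $W_n:=\omega_0\omega_1\cdots\omega_{n-1}$; conditioning on $\mathcal{F}_n:=\sigma(\omega_0,\dots,\omega_{n-1},\xi)$ and using that $S^n\omega$ is independent of $\mathcal{F}_n$ with law $\mathbb{P}$ gives $\mathbb{E}[\varphi\mid\mathcal{F}_n]=h\bigl(W_n^{-1}\xi\bigr)$ with $h(\eta):=\int_\Omega\varphi(\omega',\eta)\,d\mathbb{P}(\omega')$, a bounded martingale converging $(\mathbb{P}\times\nui)$-almost everywhere to $\varphi$ since $\bigvee_n\mathcal{F}_n$ is the full $\sigma$-algebra. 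The decisive step is then to use the uniqueness---equivalently, the $\mui$-ergodicity---of $\nui$ to conclude that $\varphi$ is $(\mathbb{P}\times\nui)$-almost everywhere constant; this is where the hypothesis is genuinely consumed, and it rests on the Furstenberg/Benoist--Quint correspondence between the asymptotics of these martingales and $\mui$-stationary measures, together with some care about the quasi-invariance of $\nui$ under $G$. I expect this last step---in effect, reproving the ergodicity criterion---to be the main obstacle, which is exactly why it is cleanest to quote it; the invariance computation and the martingale identification above are routine bookkeeping.
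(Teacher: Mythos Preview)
Your proof is correct and follows essentially the same route as the paper: the invariance computation is identical, and for ergodicity the paper carries out in full the martingale argument you sketch in your final paragraph (showing that $h$ is $\pmui$-invariant, hence $\nui$-a.e.\ constant by uniqueness, so each conditional expectation equals that constant), whereas you package this as the standard Furstenberg/Benoist--Quint criterion and then invoke Theorem~\ref{unicite} for $\mui$. The only difference is presentational: you quote the criterion as a black box and sketch its mechanism, while the paper proves it directly.
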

	
	\begin{proof}
		Let $\beta = \mathbb{P} \times \nui$. For any bounded Borel function $\psi $ on $\Omega \times \overline{X}$, 
		\begin{equation*}
			\beta(\psi)= \int_\Omega \int_{\overline{X}} \psi(\omega,x) d\mathbb{P}d\nui(x).
		\end{equation*}
		The following computation shows that $T$ is probability  measure preserving. 
		\begin{eqnarray}
			\beta(\psi \circ T)& = & \int_\Omega \int_{\overline{X}} \psi(S \omega,\omega_0^{-1}x) d\mathbb{P}(\omega)d\nui(x) \nonumber \\
			& = &  \int_\Omega \int_{\overline{X}} \psi(S\omega,x) d\mathbb{P}( \omega)d\nui(x) \text{ because $\nui$ is $\mui$-stationary}\nonumber \\
			& = &  \beta(\psi). \nonumber
		\end{eqnarray}
		Let us show that $\beta $ is ergodic. Let $\psi$ be a Borel function  on $\Omega \times \overline{X}$ which is $T$-invariant. 
		
		Let us define by $\pmui$ the Markov operator associated to $\mui $: for all Borel bounded function $f $ on $\overline{X}$, $\pmui f (x) = \int f(gx)d\mui(g)$. It follows that 
		\begin{eqnarray}
			\nui(\pmui f) & = & \int \int f(gx)d\mui(g) d\nui(x) \nonumber \\
			& = & \mui \ast \nui (f) \nonumber \\
			& = & \nui(f) \text{ by $\mui$-stationarity.}
		\end{eqnarray}
		Reversing this computation, it is then equivalent to say that a measure $\nui'$ is a $\mui$-stationary measure on $\overline{X}$ and to say that it is $\pmui$-invariant. Since $\nui$ is the unique $\mui$-stationary measure, it is the only $\pmui$-invariant measure on $\overline{X}$, hence $\nui$ is $\pmui$-ergodic. 
		
		Let $\psi $ be a $T$-invariant bounded Borel function on $\Omega \times \overline{X}$. Denote $\phi (x) = \int \psi(\omega, x) d\mathbb{P}(\omega) d\nui(x)$, which is a bounded Borel function on $\overline{X}$. 
		\begin{eqnarray}
			\pmui \phi (x) &=& \int \int \psi (\omega, gx) d\mathbb{P}(\omega) d\mui(g) \nonumber \\
			&=& \int \int \psi (\omega, g^{-1}x) d\mathbb{P}(\omega) d\mu(g) \nonumber \\
			&=& \int (\psi \circ T) (\omega, x) d\mathbb{P}(\omega) = \phi (x) \nonumber. 
		\end{eqnarray}
	
		Thus $\phi$ is $\pmui$-invariant, hence constant by ergodicity, say equal to $C$. Let $\mathcal{X}_n $ be the $\sigma$-algebra generated by $\mu^{\otimes n} \times \nui$, and $\phi_n = \mathbb{E}[\phi \, | \, \mathcal{X}_n]$, so that the sequence $(\phi_n)$ is a bounded martingale, and then converges to $\psi$ by the martingale convergence theorem. We have by definition 
		\begin{eqnarray}
			\phi_n(\omega_0, \dots, \omega_{n-1}, x) & = & \int \psi((\omega_0, \dots, \omega_{n-1}), \omega, x) d \mathbb{P}(\omega) \nonumber \\
			& = & \int \psi \circ T^n ((\omega_0, \dots, \omega_{n-1}), \omega, x) d \mathbb{P}(\omega) \nonumber \text{ by $T$-invariance}\\
			& = & \int \psi \circ T^n (\omega, \omega_{n-1}^{-1} \dots \omega_0^{-1}x) d \mathbb{P}(\omega) \nonumber\\
			& = & \phi (\omega_{n-1}^{-1} \dots \omega_0^{-1}x) \nonumber \\
			& = & C. \nonumber
		\end{eqnarray}
		Then $\psi $ is also constant. We have proven that $T$ acts ergodically on $\beta$.
		
	\end{proof}

	We can now conclude the proof of Theorem \ref{drift}. 
	
	\begin{proof}[Proof of Theorem \ref{drift}]
		
		Let $x  \in X$ be a base point. Define the function $H : \Omega \times \overline{X} \rightarrow \mathbb{R}$ by 
		\begin{equation*}
			H(\omega, \xi) = h_\xi (\omega_0 x). 
		\end{equation*}
		Recall that $h_\xi $ is 1-Lipschitz on $X$, hence $|H(\omega, \xi)| \leq d(x, \omega_0 x ) $. Since $\mu $ has finite first moment, $\int |H(\omega, \xi)| d\mathbb{P}(\omega)d\nui(\xi) < \infty$. 
		
		Now observe that for all $g_1, g_2 \in G, \, y \in Y$, horofunctions satisfy a cocycle relation: 
		\begin{eqnarray}
			h_\xi (g_1g_2 y ) & = &  \lim_{x_n \rightarrow \xi}d(g_1g_2x , x_n) - d(x_n, x)  \nonumber \\
			& = & \lim_{x_n \rightarrow \xi}d(g_2x , g_1^{-1}x_n) - d(g_1 x, x_n ) +   d(g_1 x, x_n ) - d(x_n, x) \nonumber \\ 
			& = & \lim_{x_n \rightarrow \xi}d(g_2x , g_1^{-1}x_n) - d( x, g_1^{-1}x_n ) +   d(g_1 x, x_n ) - d(x_n, x) \nonumber \\ 
			& = & h_{g_1^{-1}\xi} (g_2 x) + h_\xi (g_1 x). \label{cocycle horof}
		\end{eqnarray}
		Relation \eqref{cocycle horof} gives that 
		\begin{eqnarray}
			h_\xi (Z_n x) =  \sum_{k=1}^{n} h_{Z_k^{-1}\xi} (\omega_k x )  = \sum_{k=1}^{n} H(T^k (\omega, \xi)) \label{transient cocycle}. 
		\end{eqnarray}
		
		By Proposition \ref{sublin approx}, we have that for $\nu$-almost $\xi \in \overline{X}$, and $\frac{1}{n}h_\xi (Z_n x) \rightarrow \lambda$, thus $\frac{1}{n}\sum_{k=1}^{n} H(T^k (\omega), \xi) \rightarrow \lambda$. In the meantime, due to Proposition \ref{ergodic}, we can apply Birkhoff Ergodic Theorem and obtain
		\begin{equation*}
			\frac{1}{n}\sum_{k=1}^{n} H(T^k (\omega, \xi)) \rightarrow \int H(\omega, \xi) d\mathbb{P}(\omega)d\nui (\xi).
		\end{equation*}
		
		Now, by Proposition \ref{sublin approx} together with Theorem \ref{convergence} gives that $h_\xi(Z_n(\omega)x) $ tends to $+\infty$ almost surely. By equation \eqref{transient cocycle}, it means that $\sum_{k=1}^{n} H(T^k (\omega), \xi)$ is a transient cocycle. Now by \cite[Lemma 3.6]{guivarch_raugi85}, we obtain that $\int H(\omega, \xi) d\mathbb{P}(\omega)d\nui (\xi) > 0$. In other words, the drift is positive and we have proven Theorem \ref{drift}. 
	\end{proof}
	
	\subsection{Applications}

	We can now add an application that is a reformulation of \cite[Theorem 2.1]{karlsson_margulis}, now that we know that the drift is positive. It states that we have a geodesic tracking of the random walk. 
	
	\begin{cor}
		Let $G$ be a discrete group and $G \curvearrowright X$ a non-elementary action by isometries on a proper $\cat (0)$ space $X$. Let $\mu \in \prob(G) $ be an admissible probability measure on $G$ with finite first moment, and assume that $G $ contains a rank one element. Let $x \in X$ be a basepoint of the random walk. Then for almost every $\omega \in \Omega$, there is a unique geodesic ray $\gamma^\omega : [0, \infty) \rightarrow X$ starting at $x$ such that 
		\begin{equation}
			\lim_{n\rightarrow \infty} \frac{1}{n} d(\gamma^\omega(\lambda n), Z_n(\omega)x) = 0, 
		\end{equation}
		where $\lambda $ is the (positive) drift of the random walk. 
	\end{cor}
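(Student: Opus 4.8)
The plan is to recognise this corollary as exactly the geodesic tracking (``ray approximation'') statement of Karlsson and Margulis \cite[Theorem 2.1]{karlsson_margulis}, all of whose hypotheses are now in place. First I would check the setting: $X$ is a proper $\cat(0)$ space, $G$ acts by isometries, $\mu$ has finite first moment, and the random walk $(Z_n(\omega))$ defines an integrable cocycle of isometries over the ergodic Bernoulli shift; by Kingman's subadditive ergodic theorem applied to $n\mapsto d(Z_n(\omega)x,x)$ (as recalled at the start of this section, following \cite[Corollary 4.3]{karlsson_margulis}) the drift $\lambda=\lim_n\frac1n d(Z_n(\omega)x,x)$ exists and is $\mathbb{P}$-a.s.\ constant. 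The only extra input needed to make \cite[Theorem 2.1]{karlsson_margulis} non-vacuous is the strict inequality $\lambda>0$, and this is precisely Theorem~\ref{drift}.

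Granting this, \cite[Theorem 2.1]{karlsson_margulis} produces, for $\mathbb{P}$-almost every $\omega$, a geodesic ray $\gamma^\omega\colon[0,\infty)\to X$ with $\gamma^\omega(0)=x$ and $\frac1n\,d\bigl(\gamma^\omega(\lambda n),Z_n(\omega)x\bigr)\to 0$, which is the asserted tracking. For uniqueness I would argue as follows. Since $\lambda>0$, the point $\gamma^\omega(\lambda n)$ leaves every bounded set, and because $d(\gamma^\omega(\lambda n),Z_n(\omega)x)=o(n)=o\bigl(d(x,\gamma^\omega(\lambda n))\bigr)$ while $Z_n(\omega)x\to z^{+}(\omega)$ by Theorem~\ref{convergence}, the defining condition for the visual topology (the basis $U(c,r,\varepsilon)$ of Section~\ref{background}) forces $\gamma^\omega(\infty)=z^{+}(\omega)$. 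As the geodesic ray from $x$ to a prescribed point of $\bd X$ is unique in a $\cat(0)$ space, $\gamma^\omega$ is uniquely determined; equivalently, any two distinct rays from $x$ diverge at least linearly (the function $t\mapsto d(\gamma_1(t),\gamma_2(t))$ is convex and vanishes at $0$, hence $t\mapsto\tfrac1t d(\gamma_1(t),\gamma_2(t))$ is non-decreasing), so two rays both tracking $(Z_n(\omega)x)_n$ sublinearly must coincide once $\lambda>0$. This is a second place where Theorem~\ref{drift} enters.

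The genuine content, and the main obstacle, lies entirely inside \cite[Theorem 2.1]{karlsson_margulis}: one must upgrade the pointwise convergence $Z_n(\omega)x\to z^{+}(\omega)$ into an \emph{effective}, uniform-in-$n$ sublinear fellow-travelling, which Karlsson and Margulis achieve through the horofunction compactification together with a Birkhoff-type estimate for Busemann cocycles of the form $\frac1n h_\xi(Z_n(\omega)x)\to\lambda$. I would note that the pieces needed to run such an argument in our situation have in fact already been assembled above --- Corollary~\ref{sublin approx} and the Proposition preceding it, Proposition~\ref{continuite proj}, and the rank one geometry of $z^{+}(\omega)$ recorded in Corollary~\ref{limit points are rank one} --- so that one could instead give a self-contained proof by taking $\gamma^\omega$ to be the ray from $x$ to $z^{+}(\omega)$, projecting $Z_n(\omega)x$ onto it, and using the $\cat(0)$ comparison $d(x,Z_n x)^2\ge d(x,p_n)^2+d(p_n,Z_n x)^2$ for the projection $p_n$ together with $\frac1n h_\xi(Z_n(\omega)x)\to\lambda$ to force $d(p_n,Z_n x)=o(n)$; but since this essentially reproduces the Karlsson--Margulis argument, I would simply invoke it.
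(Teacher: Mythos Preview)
Your proposal is correct and matches the paper's approach: the paper does not give a proof of this corollary but simply presents it as ``a reformulation of \cite[Theorem 2.1]{karlsson_margulis}, now that we know that the drift is positive,'' which is exactly what you do. Your added uniqueness argument (via convexity of $t\mapsto d(\gamma_1(t),\gamma_2(t))$, or equivalently via identification of the endpoint with $z^{+}(\omega)$) is a welcome clarification that the paper leaves implicit.
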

	
	Another application that could be of interest is about boundary theory. The convergence of the random walk stated in Theorem \ref{convergence} provides a natural map 
	\begin{equation}
		z^{+} :  \left\{ \begin{array}{rcr}
			\Omega & \rightarrow & \bd X \nonumber\\
			\omega & \mapsto & z^{+}(\omega). \nonumber
		\end{array} 
		\right.
	\end{equation}
	
	Since for all $n, \omega$, $Z_n(S\omega) = \omega_0^{-1} Z_{n+1}(\omega)$, we have the equivariance property 
	\begin{equation}
		z^{+}(S\omega) = \omega_0^{-1}z^{+}(\omega). \nonumber
	\end{equation}
	
	In other words, $(\bd X, \nu)$ is a $(G, \mu)$-boundary. A natural question is to determine under which conditions $(\bd X, \nu)$ is maximal between $(G, \mu)$-boundaries in the sense of Theorem \ref{poisson boundary}. Now Kaimanovich gave a criterion \cite[Theorem 6.4]{kaimanovitch00}, namely the "strip criterion" for determining whether $(\bd X, \nu)$ is maximal within the category of $(G, \mu)$-boundaries. 
	\newline
	
	A \textit{gauge} on $G$ is an increasing sequence $\mathcal{G} = (\mathcal{G}_k)_k$ exhausting $G$. The \textit{gauge function} associated to $\mathcal{G}$ is the the function 
	\begin{equation*}
		|g|_\mathcal{G} := \min \{k \, : \, g \in \mathcal{G}_k\}.
	\end{equation*} 
	
	\begin{thm}[{\cite[Theorem 6.4]{kaimanovitch00}}]\label{kai strips}
		Let $\mu$ be a probability measure on a countable group $G$ with finite entropy $H(\mu) = - \sum_{g \in G} \mu(g) \log (\mu(g)) < \infty$, and let $(B_{-}, m_-)$ and $(B_{+}, m_+)$ be $(G, \mui)$ and $(G, \mu)$-boundaries respectively. Assume that there exists a gauge $\mathcal{G}$ on $G$ and a measurable $G$-equivariant map $S$ assigning to pairs of points $(b_-, b_+ ) \in (B_-, B_+)$ non-empty "strips" in $G$ such that for all $g \in G$, and $(m_- \otimes m_+ )$-a.e. $(z_-, z_+) \in B_- \times B_+$, 
		\begin{equation}
			\frac{1}{n} \log |S(b_-, b_+) g \cap \mathcal{G}_{|Z_n|}| \underset{n  \rightarrow \infty}{\longrightarrow} 0 \label{strip criterion equation}
		\end{equation}
		in probability with respect to $\mathbb{P}$, then the boundary $(B_+, b_+)$ is maximal.
	\end{thm}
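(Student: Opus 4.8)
The plan is to reduce the statement to the \emph{entropy criterion} for the Poisson boundary: when $H(\mu) < \infty$, a $(G,\mu)$-boundary $(B_{+}, m_{+})$ is maximal (i.e.\ is the Poisson--Furstenberg boundary) as soon as the Shannon conditional entropy of the position given the boundary grows sublinearly,
\begin{equation*}
	\frac{1}{n}\, H\!\left(Z_n \mid \mathrm{bnd}_{+}\right) \underset{n \to \infty}{\longrightarrow} 0,
\end{equation*}
where $\mathrm{bnd}_{+} : \Omega \to B_{+}$ is the boundary map supplied by Theorem \ref{furstenberg73}; this sufficient condition goes back to Kaimanovich--Vershik and Derriennic, see \cite{kaimanovitch00}. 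So the whole point is to exhibit, for $\mathbb{P}$-almost every sample path, a subexponentially large subset of $G$ that is known to contain $Z_n$ once the boundary data of the path is known, and this is exactly what the strips provide once one also feeds in the reversed walk through $B_{-}$.

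First I would pass to the bilateral path space: let $(y_k)_{k \in \mathbb{Z}}$ be i.i.d.\ of law $\mu$, put $Z_n = y_1 \cdots y_n$ for $n \geq 0$, and let the negative-time increments drive the $\check\mu$-random walk, whose limit is a point $b_{-} \in B_{-}$, while simultaneously $Z_n \to b_{+} \in B_{+}$. Since the positive-time and negative-time increments are independent, $b_{-}$ is independent of the whole forward walk (in particular of $(Z_n, b_{+})$), and a standard argument identifies the marginals of $b_{-}$ and $b_{+}$ as $m_{-}$ and $m_{+}$; hence the joint law of $(b_{-}, b_{+})$ is $m_{-} \otimes m_{+}$, the measure appearing in the hypothesis. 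Next, using $G$-equivariance of $S$ together with the convergence of the forward walk to $b_{+}$ and of the backward walk to $b_{-}$, one shows that the whole bi-infinite trajectory is \emph{trapped} by the pair $(b_{-}, b_{+})$: there is $g_n \in G$, with $|g_n|_{\mathcal{G}}$ bounded uniformly in $n$ (accounting for the discrepancy between the base point $e$ and the strip), such that $Z_n \in S(b_{-}, b_{+})\, g_n$. As $|Z_n|_{\mathcal{G}}$ is a function of $Z_n$, this says that, conditionally on $(b_{-}, b_{+})$, the variable $Z_n$ is supported on $S(b_{-}, b_{+})\, g_n \cap \mathcal{G}_{|Z_n|}$, so
\begin{equation*}
	H\!\left(Z_n \mid b_{-}, b_{+}\right) \;\leq\; \mathbb{E}\!\left[\, \log \bigl| S(b_{-}, b_{+})\, g_n \cap \mathcal{G}_{|Z_n|} \bigr| \,\right] + H\!\left(|Z_n|_{\mathcal{G}}\right),
\end{equation*}
and the last term is negligible (the entropy of a single scalar comparable to $n$). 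Invoking hypothesis \eqref{strip criterion equation} together with $H(\mu) < \infty$ — which provides the uniform integrability needed to upgrade the stated convergence in probability to $L^1$-convergence, and which guarantees that conditioning on the boundary changes neither the escape rate nor, up to $o(n)$, the gauge length $|Z_n|_{\mathcal{G}}$ — one obtains $\tfrac{1}{n}\, H(Z_n \mid b_{-}, b_{+}) \to 0$.

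Finally I would remove $b_{-}$ from the conditioning, which is where the bilateral construction pays off: $b_{-}$ is measurable with respect to the negative-time increments, while $Z_n$ and $b_{+}$ are measurable with respect to the positive-time increments, so $b_{-}$ is independent of $(Z_n, b_{+})$; in particular $Z_n$ and $b_{-}$ are conditionally independent given $b_{+}$, whence $H(Z_n \mid b_{+}) = H(Z_n \mid b_{-}, b_{+})$. Combined with the previous step this gives $\tfrac{1}{n}\, H(Z_n \mid b_{+}) \to 0$, and the entropy criterion yields that $(B_{+}, m_{+})$ is maximal; the symmetric computation for $\check\mu$ gives maximality of $(B_{-}, m_{-})$ as well. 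The step I expect to be the main obstacle is the ``trapping'' one: making precise, for an abstract $(G,\mu)$-boundary, why equivariance of $S$ plus convergence of the two one-sided walks forces $Z_n$ into a bounded translate of $S(b_{-}, b_{+})$, and — relatedly — transferring hypothesis \eqref{strip criterion equation}, which is stated for a fixed, decoupled pair $(z_{-}, z_{+})$ and the law $\mathbb{P}$, to the pair $(b_{-}, b_{+})$ that is genuinely coupled to the trajectory. Both points rely on the finite-entropy hypothesis and on the fact that $\tfrac{1}{n}|Z_n|_{\mathcal{G}}$ has a deterministic limit under both the unconditioned and the boundary-conditioned walk.
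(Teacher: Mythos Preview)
The paper does not prove this theorem. It is quoted from \cite[Theorem 6.4]{kaimanovitch00} as a black box, and the surrounding discussion only speculates about applying it in the rank-one $\cat(0)$ setting; no argument for the statement itself is given. So there is no ``paper's own proof'' to compare your proposal against.

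For what it is worth, your outline tracks the architecture of Kaimanovich's original proof: reduce maximality to the Kaimanovich--Vershik/Derriennic entropy criterion, pass to the bilateral path space so that $(b_{-},b_{+})$ has law $m_{-}\otimes m_{+}$, trap $Z_n$ in $S(b_{-},b_{+})$, bound $H(Z_n\mid b_{-},b_{+})$ by the logarithm of the strip count plus the entropy of the gauge radius, and finally drop $b_{-}$ by independence. One simplification on the step you flag as the obstacle: the trapping is cleaner than you suggest. After enlarging each strip by the single element $e$ (which does not affect \eqref{strip criterion equation}), $G$-equivariance together with the shift identity $b_{\pm}(S^n\omega)=Z_n(\omega)^{-1}b_{\pm}(\omega)$ gives $Z_n\in S(b_{-},b_{+})$ on the nose, with no auxiliary $g_n$. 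Your second concern, transferring the hypothesis from a decoupled pair $(z_{-},z_{+})$ under $\mathbb{P}$ to the trajectory-coupled pair $(b_{-},b_{+})$, is a genuine technical point in Kaimanovich's argument; it is handled essentially as you indicate, via finite entropy and the fact that $\tfrac{1}{n}|Z_n|_{\mathcal{G}}$ has the same deterministic limit under the conditioned and unconditioned walks.
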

	
	Using this celebrated result, it could be possible to adapt our situation to this context in order to give satisfactory criteria for which $(\bd X, \nu)$ is in fact the Poisson boundary of $(G, \mu)$. If we further assume that the action is proper and cocompact, the criterion is satisfied and it was done by Karlsson and Margulis \cite[Corollary 6.2]{karlsson_margulis}. 
	
	If we do not assume that the action is geometric, we think that Corollary \ref{limit points are rank one} could be useful in order to find the strips required in Theorem \ref{kai strips}, and thus proving the maximality of $(\bd X, \nu)$ as a $(G, \mu)$-boundary. T.~Fern\'{o}s used this kind of strategy in order to give weak conditions under which the Roller boundary of a finite dimensional $\cat(0)$ cube complex is in fact the Furstenberg-Poisson boundary of a random walk on an acting group $G$. Nevertheless, we were not able to determine satisfying assumptions under which Theorem \ref{kai strips} could be applied in our context.

	\bibliographystyle{alpha}
	\bibliography{bibliography}

\end{document}